\documentclass[11pt]{amsart}
\headheight=8pt     \topmargin=0pt \textheight=620pt
\textwidth=432pt \oddsidemargin=20pt \evensidemargin=25pt

\usepackage{amsmath}
\usepackage{amssymb}
\usepackage{amsfonts}
\usepackage{amsthm}
\usepackage{enumerate}
\usepackage[mathscr]{eucal}
\usepackage{verbatim}
\usepackage{amsthm}
\usepackage{amscd}
\usepackage[mathscr]{eucal}
\usepackage{appendix}
\usepackage{tikz}

\numberwithin{equation}{section}



\providecommand{\customgenericname}{}

\newcommand{\newcustomtheorem}[2]{\newenvironment{#1}[1]
  {\renewcommand\customgenericname{#2}
   \renewcommand\theinnercustomgeneric{##1}\innercustomgeneric}{\endinnercustomgeneric}}

\newcustomtheorem{customthm}{Theorem}

\newcommand{\newcustomlemma}[2]{\newenvironment{#1}[1]
  {\renewcommand\customgenericname{#2}
   \renewcommand\theinnercustomgeneric{##1} \innercustomgeneric}{\endinnercustomgeneric}}

\newcustomlemma{customlemma}{Lemma}

\newcustomlemma{customproposition}{Proposition}

\newcustomlemma{customclaim}{Claim}

\theoremstyle{plain}
\newtheorem{theorem}{Theorem}[section]
\newtheorem{lemma}[theorem]{Lemma}

\newtheorem{proposition}[theorem]{Proposition}

\theoremstyle{remark}
\newtheorem{remark}{Remark}

\theoremstyle{definition}


\newcommand{\q}{\quad}
\newcommand{\qq}{\qquad}



\newcommand{\ga}{\gamma}
\newcommand{\Ga}{\Gamma}
\newcommand{\om}{\omega}
\newcommand{\Om}{\Omega}
\newcommand{\la}{\lambda}
\newcommand{\La}{\Lambda}

\newcommand{\ep}{\epsilon}

\newcommand{\bbz}{\mathbb{Z}}

\newcommand{\bbzn}{\mathbb Z^n}

\newcommand{\bbr}{\mathbb{R}}

\newcommand{\bbrn}{\mathbb R^n}
\newcommand{\rn}{\mathbb{R}^n}

\newcommand{\bbn}{\mathbb{N}}


\newcommand{\OO}{\mathcal{O}}
\newcommand{\HH}{\mathcal{H}}

\newcommand{\UU}{\mathcal{U}}
\newcommand{\WW}{\mathcal{W}}
\newcommand{\II}{\mathcal{I}}

\newcommand{\TT}{\mathcal{T}}
\newcommand{\LL}{\mathcal{L}}
\newcommand{\PP}{\mathcal{P}}
\newcommand{\BB}{\mathscr{B}}
\newcommand{\DD}{\mathscr{D}}



\newcommand{\kkk}{\vec{\boldsymbol{k}}}
\newcommand{\xxxi}{\vec{\boldsymbol{\xi}\;}}
\newcommand{\xxx}{\vec{\boldsymbol{x}}}
\newcommand{\yyy}{\vec{\boldsymbol{y}}}
\newcommand{\zzz}{\vec{\boldsymbol{z}}}

\newcommand{\uuu}{\vec{\boldsymbol{u}}}
\newcommand{\GGG}{\vec{\boldsymbol{G}}}
\def\000{\vec{\boldsymbol{0}}}

\def\ii{{\mathrm{i}}}


\newcommand{\supp}{\mathrm{supp}}


\newcommand{\f}{\frac}

\newcommand{\nf}{\infty}

\newcommand{\tf}{\tfrac}
\newcommand{\wh}{\widehat}
\newcommand{\wt}{\widetilde}



\newcounter{question}


\newcommand{\bpf}{\begin{proof}}

\newcommand{\epf}{\end{proof}}

\newcommand{\gap}{7}
\newcommand{\gaptt}{6}
\newcommand{\gapt}{7}

\setcounter{secnumdepth}{3} 
\setcounter{tocdepth}{1}    

\allowdisplaybreaks

\makeindex         

\begin{document}

\author{Loukas Grafakos}
\address{Department of Mathematics, University of Missouri, Columbia, MO 65211, USA} 
\email{grafakosl@missouri.edu}

\author{Danqing He}
\address{School of Mathematical Sciences,
Fudan University, People's Republic of China}
\email{hedanqing@fudan.edu.cn}

\author{Petr Honz\'ik}
\address{Department of Mathematics,
Charles University, 116 36 Praha 1, Czech Republic}
\email{honzik@gmail.com}

\author{Bae Jun Park}
\address{B. Park, Department of Mathematics, Sungkyunkwan University, Suwon 16419, Republic of Korea}
\email{bpark43@skku.edu}

\thanks{The research of L. Grafakos is partially supported
by the Simons Foundation Grant 624733 and by the Simons Fellows award 819503.
 D. He is supported by  National Key R$\&$D Program of China (No. 2021YFA1002500), NNSF of China (No. 12161141014),  and Natural Science Foundation of Shanghai (No. 22ZR1404900). P. Honz\'\i{}k was supported by the grant GA\v{C}R P201/21-01976S.
B. Park is supported by NRF grant 2022R1F1A1063637.}

\title{Multilinear Rough Singular Integral operators}
\subjclass[2010]{Primary 42B20, 47H60}
\keywords{Multilinear operators, Rough singular integrals, Calder\'on-Zygmund theory}

\begin{abstract} 
We study $m$-linear homogeneous rough singular integral operators $\LL_{\Omega}$ associated with  integrable functions $\Omega$ on $\mathbb{S}^{mn-1}$ with mean value zero. We prove 
boundedness for $\LL_{\Omega}$ from 
$L^{p_1}\times \cdots \times L^{p_m}$ to $L^p$  when 
$1<p_1,\dots, p_m<\nf$ and $1/p=1/p_1+\cdots +1/p_m$ in the largest possible open set of 
exponents when $\Omega \in L^q(\mathbb S^{mn-1})$ and $q\ge 2$. 
This set can be described by  a convex polyhedron in $\mathbb R^m$. 
\end{abstract}

\maketitle


\section{Introduction}

 Let $\Omega$ be an integrable function on the unit sphere $\mathbb{S}^{n-1}$ with mean value $0$. 
The rough singular integral operator associated with $\Omega$ is defined by
$$
\LL_{\Omega}f(x):=\mathrm{p.v.}\int_{\bbrn}\frac{\Omega(y/|y|)}{|y|^n}f(x-y)dy
$$
initially  for $f$ in the Schwartz class $\mathscr{S}(\bbrn)$.
 
 Calder\'on and Zygmund \cite{Ca_Zy1956} proved that if $\Omega\in L\log L(\mathbb{S}^{n-1})$, then $\LL_{\Omega}$ is bounded on $L^p(\bbrn)$ for all $1<p<\infty$. This result was improved by Coifman and Weiss \cite{Co_We1977} who replaced the condition $\Omega\in L\log L(\mathbb{S}^{n-1})$ by the less restrictive condition $\Omega\in H^1(\mathbb{S}^{n-1})$. The same conclusion was also obtained independently by Connett \cite{Co1979}.
 In the two dimensional case $n=2$, the weak type $(1,1)$ of $\LL_{\Omega}$ was established by Christ \cite{Ch1988} and independently by Hofmann \cite{Ho1988} for $\Omega\in L^q(\mathbb{S}^1)$, $1<q\le \infty$, and by Christ and Rubio de Francia \cite{Ch_Ru1988} for $\Omega\in L\log L(\mathbb{S}^1)$. These results were extended to all dimensions by Seeger \cite{Se1996}.
 
\hfill
 
In this paper we focus on analogous questions for $m$-linear singular integral operators. 
Throughout this paper we fix $m$ to be an integer greater or equal to $2$.
 Let $\Omega$ be an integrable function on
$ \mathbb{S}^{mn-1}  $ with mean value zero, and we introduce a kernel $K$ by setting  
\begin{equation*}
K(\yyy):=\frac{\Omega(\yyy')}{|\yyy|^{mn}}, \qquad \vec y \neq 0, 
\end{equation*} 
where $\yyy':=\yyy/|\yyy\,|\in \mathbb{S}^{mn-1}$. 
Then the multilinear singular integral operator associated with $\Om$ is defined as follows:   
\begin{equation*}
\LL_{\Om}\big(f_1,\dots,f_m\big)(x):=\mathrm{p.v.} \int_{(\bbrn)^m}{K(\yyy)\prod_{j=1}^{m}f_j(x-y_j)}~d\,\yyy
\end{equation*} 
for Schwartz functions $f_1,\dots,f_m$ on $\bbrn$, where $x\in \bbrn$ and $\yyy:=(y_1,\dots,y_m)\in (\bbrn)^m$.

The first important result concerning bilinear $(m=2)$ rough singular integrals appeared in the work of Coifman and Meyer \cite{Co_Me1975} who obtained an estimate for $\LL_{\Omega}$ when $\Omega$ possesses some smoothness. These authors actually showed that  if $\Omega$ is a function of bounded variation on the circle $\mathbb{S}^1$,
then the corresponding bilinear operator $\LL_{\Omega}$ is bounded from $L^{p_1}(\bbr)\times L^{p_2}(\bbr)$ to $L^p(\bbr)$ when $1<p_1,p_2,p<\infty$ and $1/p_1+1/p_2=1/p$. Later, 
for general dimensions $n\ge 1$ and all $m\ge 2$, Grafakos and Torres \cite{Gr_To2002} established the $L^{p_1}(\bbrn)\times \cdots \times L^{p_m}(\bbrn)\to L^p(\bbrn)$ boundedness of $\LL_{\Omega}$ for all $1<p_1,\dots, p_m<\infty$ with $1/p=1/p_1+\cdots+1/p_m$ when $\Omega$ is a Lipschitz function on $\mathbb{S}^{mn-1}$. 
The case of rough $\Om$ was not really addressed until the work of Grafakos, He, and Honz\'ik \cite{Gr_He_Ho2018} who proved   bilinear estimates in the full range $1<p_1,p_2<\nf$  under the condition $\Omega\in L^{\infty}(\mathbb{S}^{2n-1})$. These authors also showed that $\LL_{\Omega}$ maps $L^2(\bbrn)\times L^2(\bbrn)$ into $L^1(\bbrn)$ if $\Omega$ is merely an $L^2$ function on $\mathbb{S}^{2n-1}$. 
This initial $L^2(\bbrn)\times L^2(\bbrn)\to L^1(\bbrn)$ estimate was refined by Grafakos, He, and Slav\'ikov\'a \cite{Gr_He_Sl2020}   replacing $\Omega\in L^2(\mathbb{S}^{2n-1})$ by $\Omega\in L^q(\mathbb{S}^{2n-1})$ for $q>4/3$.
 Recently, He and Park \cite{He_Park2021} proved more points of boundedness  for the bilinear rough singular integral operators in the  range $1<p_1,p_2\le \infty$ except the endpoint $p_1=p_2=\infty$.
 \begin{customthm}{A}\cite{He_Park2021}\label{thma}
 Let $1<p_1,p_2\le\infty$ and $1/2< p<\infty$ with $1/p=1/p_1+1/p_2$.
Suppose that 
\begin{equation}\label{bilinearqcondition}
\max{\Big( \frac{4}{3},\frac{p}{2p-1} \Big)}<q\le \infty 
\end{equation}
and $\Omega \in L^q (\mathbb S^{2n-1})$ with $\int_{\mathbb S^{2n-1}} \Omega \, d\sigma=0$. Then the estimate 
\begin{equation}\label{bilinearest}
\big\Vert \LL_{\Omega}\big\Vert_{L^{p_1}\times L^{p_2}\to L^p}\lesssim \Vert \Omega\Vert_{L^q(\mathbb{S}^{2n-1})} 
\end{equation}
is valid. 
 \end{customthm}

In this paper, we will study a multilinear analogue of Theorem \ref{thma}. Such an extension is 
naturally more complicated combinatorially, but also presents additional difficulties 
as $L^2\times \cdots \times L^2$ maps into the nonlocally convex space $L^{2/m}$ when $m\ge 3$. 

In order to present our main results,   we first introduce some notation.
Let $J_m:=\{1,\dots,m\}$.
For $0<s<1$ and any subsets $J\subseteq  J_m$, we let
\begin{equation*}
\HH_J^m(s):=\Big\{(t_1,\dots,t_m)\in (0,1)^m: \sum_{j\in J}(s-t_j)>-(1-s) \Big\},
\end{equation*}
\begin{equation*}
\OO_J^m(s):=\Big\{(t_1,\dots,t_m)\in (0,1)^m: \sum_{j\in J}(s-t_j)<-(1-s) \Big\}
\end{equation*}
and we define
\begin{equation}\label{defhh}
\HH^m(s):=\bigcap_{J\subseteq J_m}\HH_J^m(s).
\end{equation}
We observe that 
$$\mathcal{H}^m(s_1)\subset \mathcal{H}^m(s_2)\subset (0,1)^m \q \text{ for }~s_1<s_2$$
and $\lim_{s\nearrow 1}\mathcal{H}^m(s)=(0,1)^m$.
See Figure \ref{fighs} for the shape of $\mathcal{H}^3(s)$ in the trilinear setting.

 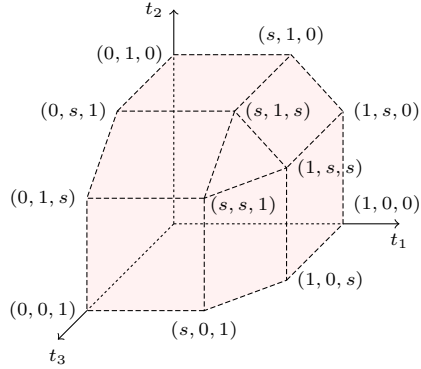
\begin{figure}[h]
\begin{tikzpicture}

\path[fill=red!5] (0,0,3)--(1.56,0,3)--(2.25,0,1.95)--(2.25,0,0)--(2.25,1.495,0)--(1.56,2.25,0)--(0,2.25,0)--(0,2.25,1.95)--(0,1.495,3)--(0,0,3);

\draw[dash pattern= { on 2pt off 1pt}](1.56,1.495,3)--(1.56,2.25,1.95)--(2.25,1.495,1.95)--(1.56,1.495,3);
\draw[dash pattern= { on 2pt off 1pt}] (1.56,1.495,3)--(1.56,0,3)--(2.25,0,1.95)--(2.25,1.495,1.95);
\draw[dash pattern= { on 2pt off 1pt}] (1.56,2.25,1.95)--(1.56,2.25,0)--(2.25,1.495,0)--(2.25,1.495,1.95);
\draw[dash pattern= { on 2pt off 1pt}] (1.56,1.495,3)--(0,1.495,3)--(0,2.25,1.95)--(1.56,2.25,1.95);
\draw[dash pattern= { on 2pt off 1pt}] (2.25,0,1.95)--(2.25,0,0)--(2.25,1.495,0);
\draw[dash pattern= { on 2pt off 1pt}] (0,2.25,1.95)--(0,2.25,0)--(1.56,2.25,0);
\draw[dash pattern= { on 2pt off 1pt}] (1.56,0,3)--(0,0,3)--(0,1.495,3);

\draw[dash pattern= { on 1pt off 1pt}] (0,0,0)--(2.25,0,0);
\draw[dash pattern= { on 1pt off 1pt}] (0,0,0)--(0,2.25,0);
\draw[dash pattern= { on 1pt off 1pt}] (0,0,0)--(0,0,3);
\draw [->] (0,0,3)--(0,0,4);
\draw [->] (0,2.25,0)--(0,2.85,0);
\draw [->] (2.25,0,0)--(3,0,0);

\node [below] at (3,0,0) {\tiny$t_1$};
\node [left] at (0,2.85,0) {\tiny$t_2$};
\node [below] at (0,0,4) {\tiny$t_3$};

\node[above right] at (2.25,0,0) {\tiny$(1,0,0)$};

\node[left] at (0,2.25,0) {\tiny$(0,1,0)$};

\node[left] at (0,0,3) {\tiny$(0,0,1)$};

\node[right] at (2.25,0,1.95) {\tiny$(1,0,s)$};

\node[right] at (2.25,1.495,1.95) {\tiny$(1,s,s)$};

\node[below] at (1.56,0,3) {\tiny$(s,0,1)$};

\node[right] at (1.5,1.4,3) {\tiny$(s,s,1)$};

\node[left] at (0,1.495,3) {\tiny$(0,1,s)$};

\node[left] at (0,2.25,1.95) {\tiny$(0,s,1)$};

\node[above] at (1.56,2.25,0) {\tiny$(s,1,0)$};

\node[right] at (1.56,2.25,1.95) {\tiny$(s,1,s)$};

\node[right] at (2.25,1.495,0) {\tiny$(1,s,0)$};

\end{tikzpicture}
\caption{The region $\mathcal{H}^3(s)$ }\label{fighs}
\end{figure}
Then the first main result of this paper as   follows: 
\begin{theorem}\label{mainthm}
Let $1/2\le s<1$ and $\Omega\in L^{\frac{1}{1-s}}(\mathbb{S}^{mn-1})$ with  $\int_{\mathbb S^{mn-1}} \Omega \, d\sigma=0$.
Suppose that $1<p_1,\dots,p_m<\infty$ and $0<p<1/m$ satisfy $1/p=1/p_1+\cdots+1/p_m$ and
\begin{equation}\label{1pjest}
(1/p_1,\dots,1/p_m)\in \mathcal{H}^m(s).
\end{equation} 
Then we have
\begin{equation}\label{mainthmest}
\big\Vert \LL_{\Om}\big\Vert_{L^{p_1}\times\cdots\times L^{p_m}\to L^{p}}\lesssim\Vert \Om\Vert_{L^{\frac{1}{1-s}}(\mathbb{S}^{mn-1})}.
\end{equation}
\end{theorem}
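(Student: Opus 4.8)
The plan is to decompose $\LL_\Om$ according to the scale of its kernel and then according to the frequencies of the inputs, to dispose of the smooth part by the multilinear Coifman--Meyer multiplier theorem, and to treat the rough part by establishing bounds on the resulting dyadic pieces that decay geometrically, the decay holding precisely on the region $\HH^m(s)$. Concretely, fix a smooth partition of unity $1=\sum_{i\in\bbz}\psi(2^{-i}t)$ on $(0,\nf)$ with $\supp\psi\subset[1/2,2]$, put $K_i(\yyy):=K(\yyy)\psi(2^{-i}|\yyy|)$, and let $T_i$ be the $m$-linear convolution operator with kernel $K_i$, so that $\LL_\Om=\sum_{i\in\bbz}T_i$ and, by the $(-mn)$-homogeneity of $K$, $\wh{K_i}(\vec\xi)=\wh{K_0}(2^i\vec\xi)$. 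Next, with $\{\Psi_\ell\}_{\ell\in\bbz}$ a Littlewood--Paley resolution of unity on $\bbr^{mn}$, $\supp\Psi_\ell\subset\{|\vec\xi|\sim 2^\ell\}$, I would set $\wh{K_i^\ell}:=\wh{K_i}\,\Psi_{\ell-i}$ and $\LL^\ell:=\sum_{i\in\bbz}T_{K_i^\ell}$, so that $\LL_\Om=\sum_{\ell\in\bbz}\LL^\ell$; the piece $\LL^\ell$ collects those interactions in which the inputs live at frequency $2^\ell$ times the reciprocal of the kernel's scale.

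The part $\sum_{\ell\le 0}\LL^\ell$ is the $m$-linear multiplier operator with symbol $\sum_{i\in\bbz}\wh{K_i}(\vec\xi)\,\Phi(2^i\vec\xi)$, where $\Phi$ is a fixed bump equal to $1$ near the origin. Since $K_i$ is supported where $|\yyy|\sim 2^i$ and $\int_{\mathbb S^{mn-1}}\Om\,d\si=0$, estimating the moments of $K_i$ shows that this symbol obeys the multilinear Mikhlin--H\"ormander bounds $|\p_{\vec\xi}^{\al}(\,\cdots)(\vec\xi)|\ls_{\al}\Vert\Om\Vert_{L^1}\,|\vec\xi|^{-|\al|}$ for all $\al$; hence the multilinear Coifman--Meyer multiplier theorem, valid for all $1<p_1,\dots,p_m<\nf$ and all $1/m<p<\nf$, bounds $\sum_{\ell\le 0}\LL^\ell$ from $L^{p_1}\times\cdots\times L^{p_m}$ to $L^p$ with constant $\ls\Vert\Om\Vert_{L^1}\ls\Vert\Om\Vert_{L^{1/(1-s)}}$, for every admissible tuple and hence on all of $\HH^m(s)$. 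It remains to show, for the rough part, that for each $\ell\ge 1$
\[
\big\Vert\LL^\ell\big\Vert_{L^{p_1}\times\cdots\times L^{p_m}\to L^p}\ls 2^{-\de\ell}\,\Vert\Om\Vert_{L^{1/(1-s)}}
\]
for some $\de=\de(p_1,\dots,p_m,s)>0$, whenever $(1/p_1,\dots,1/p_m)\in\HH^m(s)$ and $1/p=\sum_j 1/p_j$; summing in $\ell$ (the quasi-triangle inequality suffices) and using density of Schwartz functions then yields \eqref{mainthmest}.

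This geometric decay I would obtain, for the fixed operator $\LL^\ell$, by multilinear interpolation between a single gaining estimate and finitely many losing ones. The gaining estimate is
\[
\big\Vert\LL^\ell(f_1,\dots,f_m)\big\Vert_{L^{2/m}}\ls 2^{-\ep\ell}\,\Vert\Om\Vert_{L^2(\mathbb S^{mn-1})}\prod_{j=1}^{m}\Vert f_j\Vert_{L^2}\qquad(\ep>0),
\]
and it is the only place the hypothesis $q=\tfrac1{1-s}\ge 2$ is used: as $\Om\in L^2(\mathbb S^{mn-1})$, the restrictions of $\wh{K_0}$ to dyadic annuli are square-summable, and after inserting Littlewood--Paley cut-offs on the $f_j$ — the symbol of $T_{K_i^\ell}$ being supported in $\{|\vec\xi|\sim 2^{\ell-i}\}$ forces each $f_j$ to frequency $\ls 2^{\ell-i}$ with at least one at $\sim 2^{\ell-i}$ — an almost-orthogonality argument in $i$ delivers the bound. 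Crucially, the non-locally-convex exponent $2/m$ is reached only through H\"older's inequality $\Vert\prod_j g_j\Vert_{L^{2/m}}\le\prod_j\Vert g_j\Vert_{L^2}$, so no duality is invoked. The losing estimates are of the form $\Vert\LL^\ell\Vert_{L^{q_1}\times\cdots\times L^{q_m}\to L^r}\ls 2^{a(\vec q)\ell}\Vert\Om\Vert_{L^{1/(1-s)}}$ at finitely many tuples $\vec q$ lying on the faces of $\HH^m(s)$; the exponents $a(\vec q)=a(\vec q,s)>0$ arise from elementary size bounds on $K_i^\ell$ — via Young's and Bernstein's inequalities, Littlewood--Paley theory, and the precise $L^{1/(1-s)}$ normalization of $\Om$, which is what makes $a$ depend on $s$ — and for the facet $\sum_{j\in J}t_j=s|J|+(1-s)$ the corresponding $a$ is exactly what is needed so that interpolating the gaining estimate against that losing estimate produces net geometric decay on precisely the open half-space $\HH_J^m(s)$. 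Intersecting over $J$ as in \eqref{defhh} gives the decay on $\HH^m(s)$. All the interpolations are carried out in the quasi-Banach category, using a multilinear complex interpolation theorem that permits $L^p$ targets with $p<1$.

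The step I expect to be the genuine obstacle is exactly the non-local convexity of $L^p$ and of the intermediate space $L^{2/m}$ when $m\ge 3$, which the introduction already flags: one cannot linearize and dualize, so the gaining estimate must be extracted directly from the multilinear structure and every interpolation must remain inside the quasi-Banach framework. A second, more bookkeeping-heavy difficulty is to identify the correct finite list of losing estimates and to compute their exponents sharply in terms of $s=1/q'$, so that the region one cuts out is exactly the polyhedron $\HH^m(s)=\bigcap_{J\subseteq J_m}\HH_J^m(s)$ rather than a proper subset of it.
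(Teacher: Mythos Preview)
Your decomposition and the identification of the gaining $L^2\times\cdots\times L^2\to L^{2/m}$ estimate are correct and match the paper's scheme. The gap is in the losing estimates. You assert that ``elementary size bounds on $K_i^\ell$ --- via Young's and Bernstein's inequalities, Littlewood--Paley theory'' produce, at points on the faces of $\HH^m(s)$, growth exponents $a(\vec q)$ calibrated so that interpolation with the single gaining point cuts out \emph{exactly} the half-space $\HH_J^m(s)$. This does not work. The elementary estimate available (essentially \eqref{e09022}) gives growth like $2^{(mn-\delta)\mu}$, uniformly in $(1/p_1,\dots,1/p_m)$; interpolating that against a gain $2^{-\tilde\delta\mu}$ at $(1/2,\dots,1/2)$ yields decay only on a tiny neighborhood of the center, not on all of $\HH^m(s)$. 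More fundamentally, interpolation between a fixed decay $-\tilde\delta$ at the center and a fixed growth $a>0$ at a boundary point $P_1$ gives decay only on the segment fraction $\tilde\delta/(\tilde\delta+a)$; to reach all the way to the face you would need $a$ arbitrarily small there, which is already a boundedness statement on the face and is precisely what one has no elementary access to.

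What the paper actually proves --- and calls its main contribution --- is Proposition~\ref{mainproposition}: for every $\epsilon>0$ and every point of the \emph{open} set $\HH^m(s)$ one has the arbitrarily slow growth bound $\|\LL_\mu\|\lesssim_\epsilon 2^{\epsilon\mu}\|\Om\|_{L^{1/(1-s)}}$. This is obtained not by size estimates but by a wavelet decomposition of $\wh{K_\mu^0}$, a column/projection combinatorial argument refining \cite{Paper1} to get the case $p_1,\dots,p_m\ge 2$ when $s=1/2$, a modified Calder\'on--Zygmund argument (after \cite{He_Park2021}) to push one exponent down to $p_j=1$ at the weak endpoint, and then an induction in $s$ along the sequence $a_\nu=1-\tfrac12(1-\tfrac1m)^\nu$ (Claims $X(s)\Rightarrow Y(s)\Rightarrow Z(s)\Rightarrow\Sigma(s)$, with Lemma~\ref{convexhull} identifying $\HH^m(s)$ as a convex hull). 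Only after this is in hand does the paper interpolate against the gaining point. What you dismiss as ``bookkeeping-heavy'' is the heart of the proof; your outline should replace the elementary losing estimates by this arbitrarily-slow-growth proposition and sketch its inductive mechanism.
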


It is natural to ask for the optimal range of indices $p_j$ in \eqref{1pjest} for which \eqref{mainthmest} holds.  Our second main result below provides the necessity of the condition \eqref{1pjest}.
We note that the intersection in \eqref{defhh} can be actually taken over $J\subseteq J_m$ with $|J|\ge 2$ as the inequality $\sum_{j\in J}(s-t_j)>-(1-s)$ is trivial for $0<t_j<1$, $j=1,\dots,m$, if $|J|\le 1$.
\begin{theorem}\label{mainthm2}
Let $0<s<1$. Suppose that $1<p_1,\dots,p_m<\infty$ and $1/m<p<\infty$ satisfy 
\begin{equation}\label{oocondition}
(1/p_1,\dots,1/p_m)\in    \bigcup_{J\subset J_m : |J|\ge 2}\OO^m_J(s)
\end{equation}
and  $1/p=1/p_1+\cdots+1/p_m$.
Then there exists $\Omega\in L^{\frac{1}{1-s}}(\mathbb{S}^{mn-1})$ with  $\int_{\mathbb S^{mn-1}} \Omega \, d\sigma=0$ such that
  estimate \eqref{mainthmest} does not hold.
  In particular, for $q<\tf{2(m-1)}{m}$, there exists a function $\Omega\in L^{q}(\mathbb{S}^{mn-1})$  with  $\int_{\mathbb S^{mn-1}} \Omega \, d\sigma=0$ such that
  $\mathcal L_\Om$ is unbounded from $L^2\times \cdots \times L^2$ to $L^{2/m}$.
\end{theorem}

Thus, combining Theorems~\ref{mainthm} and~\ref{mainthm2}  we obtain  that 
$\mathcal{H}^m(1/q')$ is the largest open set of indices  
$(1/p_1,\dots , 1/p_m)$ for  which boundedness holds for 
$ \LL_{\Omega}$ when $\Om\in L^q(\mathbb S^{mn-1})$ and $q\ge 2$.  (Here $q'$ is the dual index of $q$). 

\begin{remark}
When $m=2$,   condition \eqref{oocondition} is equivalent to $s+1<1/p$ and this implies
 that if $\Vert \LL_{\Omega}\Vert_{L^{p_1}\times L^{p_2}\to L^p}\lesssim \Vert \Omega\Vert_{L^q(\mathbb{S}^{2n-1})}$ holds for $1<p_1,p_2<\infty$ with $1/p_1+1/p_2=1/p$, then we must have 
$\frac{p}{2p-1}\le q$ as $q=\frac{1}{1-s}$. This clearly indicates the necessity of one part of the condition \eqref{bilinearqcondition} in Theorem \ref{thma}.
\end{remark}

\begin{remark}
It is still unknown whether the bilinear estimate \eqref{bilinearest} holds when $q=\frac{p}{2p-1}$ in Theorem \ref{thma}.
In general, we have no conclusion in Theorem \ref{mainthm} when
$$\sum_{j\in J_0}(s-1/p_j)=-(1-s)~\text{  for some }~ J_0\subseteq J_m$$ and
$$(1/p_1,\dots,1/p_m)\in \Big( \bigcup_{J\subset J_m : |J|\ge 2}\OO_J(s) \Big)^c.$$
\end{remark}

\begin{remark}
It is proved in \cite{Paper1} that if  $\Omega\in L^q(\mathbb{S}^{mn-1})$ for $q>\frac{2m}{m+1}$ and $\int_{\mathbb{S}^{mn-1}}\Omega d\sigma=0$, then  
\[
\Vert \LL_{\Omega}\Vert_{L^2\times \cdots\times L^2\to L^{2/m}}\lesssim \Vert \Omega\Vert_{L^q(\mathbb{S}^{mn-1})}.
\]
 In view of this, one might think that 
Theorem \ref{mainthm}  also holds for certain $s<\frac 12$, and possibly for all $s>\f{m-1}{2m}$ or even for all $s>0$.
\end{remark}

To summarize, although the case $\Om\in L^q(\mathbb{S}^{mn-1})$ with $q\ge 2$ is   resolved in this paper, only partial results are currently known in the case $q<2$ as it presents several challenges. 

The proof of Theorem \ref{mainthm} is based on the dyadic decomposition of Duoandikoetxea and Rubio de Francia \cite{Du_Ru1986} and  on its  $m$-linear adaptation contained in some of the aforementioned references.  The main idea is as follows: 
We express the operator $\LL_{\Omega}$ as
$ \sum_{\mu\in\bbz}\LL_{\mu}$
where $\Vert \LL_{\mu}\Vert_{L^{p_1}\times \cdots \times L^{p_m}\to L^p}\lesssim 2^{\delta_0\mu}\Vert \Omega\Vert_{L^q}$   for all $1<q<\infty$ and some $\delta_0>0$, depending on $q$.
As the series   is summable when $\mu<0$, we  focus on obtaining an good decay when $\mu\to +\infty$. 
Such an estimate is stated in \eqref{mainmaingoal} below. In order to obtain this estimate, we apply multilinear interpolation   between  an initial $L^2\times \cdots\times L^2\to L^{2/m}$ estimate with exponential decay $2^{-\wt{\delta}}\mu$ for some fixed number $\wt{\delta}>0$ 
and  general $L^{p_1}\times\cdots\times L^{p_m}\to L^p$ estimates with arbitrarily slow growth in Proposition \ref{mainproposition}.

The arbitrarily slow growth estimate obtained in Proposition \ref{mainproposition} is actually  the main 
contribution   of  this paper.
Let us explain our strategy in more details. 
Unlike the bilinear case, we are not able to obtain estimates for the local $L^2$ cases (namely $p_1,p_2,p'\in[2,\nf)$) from the initial estimates by duality. To overcome this obstacle, when $q=2$, we refine the column-argument developed in \cite{Paper1} to obtain the  estimate in the upper $L^2$ case (i.e., $p_1,p_2\in [2,\nf)$). This combined with a modified Calder\'on-Zygmund argument developed in \cite{He_Park2021} yields the desired range for $q=2$. For $q=\tf{2m}{m-1}$, based on the {\color{purple}estimate }
for $q=2$, a simple geometric observation about the range of indices leads to the estimates in the upper $L^{\tfrac{2m}{m+1}}$ case, and hence the full desired range by the modified Calder\'on-Zygmund argument. Repeating this process gives Proposition \ref{mainproposition} for all $q\in[2,\nf)$. We remark that this induction argument still holds when $q<2$, but the initial case $q=2$ stops us from obtaining Theorem~\ref{mainthm} for this range of $q$.

As far as the proof of Theorem \ref{mainthm2} is concerned, we adapt an idea appearing in some primordial form in \cite{counterex}, whose adaption can be found in \cite{Gr_He_Sl2019}. 
\hfill

{\bf Organization.}
This paper is organized as follows. We   first give the proof of Theorem \ref{mainthm2} by constructing counterexamples in Section \ref{pfthm2}.
We   reduce Theorem \ref{mainthm} to Proposition~\ref{mainproposition} in Section \ref{pfmainthm1}.
Section \ref{preliminary} contains some preliminaries and Section \ref{keylemmas} is devoted to providing several key lemmas which are essential in the proof of Proposition \ref{mainproposition}. 
In the last section, we provide a detailed proof of Proposition \ref{mainproposition}.

\hfill

{\bf Notations.}
Let $\bbn$ and $\bbz$ be the sets of all natural numbers and all integers, respectively. Let $\bbn_0:=\bbn\cup \{0\}$.
We use the symbol $A\lesssim B$ to indicate that $A\leq CB$ for some constant $C>0$ independent of the variable quantities $A$ and $B$, and $A\sim B$ if $A\lesssim B$ and $B\lesssim A$ hold simultaneously.
For simplicity, we adopt the notation $\xxxi:=(\xi_1,\dots,\xi_m)\in (\bbrn)^m $. 
 $\mathscr{C}^L(\bbr)$ consists of functions on $\bbr$ of continuous derivatives up to order $L$. $\mathscr{S}(\bbrn)$ is the class of Schwartz functions on $\rn$.

\hfill

\section{Proof of Theorem \ref{mainthm2}}\label{pfthm2}
Let $0<s<1$.
Suppose that there exists a subset  $J\subseteq \{1,\dots,m\}$ with $|J|\ge 2$ such that
  $$ \sum_{j\in J} \Big(s- \f{1}{p_j} \Big) < -(1-s),$$
which is equivalent to
\begin{equation}\label{equicondition}
\f{1}{q} + \f{|J|}{q'} < \sum_{j\in J} \f{1}{p_j}=:\f{1}{p_J}
\end{equation}
by setting $q:=\frac{1}{1-s}$, where $q'=\tf1s$ is the index conjugate to $q$.
  Here, we notice that $0<p_J<1$ as $|J|\ge 2$ and $\tf1{p_J}\ge 1+\tf{|J|-1}s$, and $1/p_J<|J|$ since $1<p_j$ for all $j\in J$.
Then we will show that there exists an  $\Omega \in L^q( S^{mn-1})$ with mean value zero such that 
$$\Vert \LL_{\Omega}\Vert_{L^{p_1}\times \cdots \times L^{p_m}\to L^p}=\infty.$$

Let $v_n = |B(0,1)|$ be the  volume of the unit ball in ${\mathbb R^n}$. 
For any natural numbers $N$ greater than $2$, we define 
$$f_j^N(y):=\begin{cases}
v_n^{-1/p_j} 2^{Nn/p_j} \chi_{B(0,2^{-N})}(y),&  j\in J\\
v_n^{-1/p_j} 2^{2n/p_j}\chi_{B(0,2^{-2})}(y),& j \in J_m\setminus J
\end{cases}$$
 so that the $L^{p_j} $  norms of $f_j$ are equal to $1$ for all $j\in J_m$. 

For $k\ge 2$, we define
$V_k^J$ to be a tubular neighborhood of  radius comparable to $2^{-k}$ of the subspace $\{(x_1,\dots , x_m)\in (\bbrn)^m: x_{\ii}=x_j \text{ for }\ii,j\in J\}.$  
Precisely, we define
\[
V_k^J :=\bigcup_{x_0\in\bbrn}\Big\{ (y_1,\dots , y_m)\in (\bbrn)^m: \;|x_0-y_j| < \f{4}{3\sqrt{|J|}} 2^{-k} \q\text{ for }~j\in J\Big\} .
\]
 Then we define the function 
\[
\omega_k(\vec{z}\,):= 2^{kn( |J|-1/p_J )  }\chi_{V_k^J\cap S^{mn-1}}(\vec{z}\,) 
\]
on the sphere. 
 We observe that the spherical measure of $V_k^J\cap {S^{mn-1}}$ is proportional to  $2^{-kn (|J|-1) }$ as we have freedom on variables $y_j$ for $j\in J_m\setminus J$ and also on only one variable among $y_j$ for $j\in J$.
 Therefore 
 \[
 \int_{S^{mn-1}} \omega_k \, d\sigma \sim 2^{kn (  (|J| - 1/p_J)   -(|J|-1) )  } 
 =  2^{-kn ( 1/p_J-1 )  } .
 \]
As $p_J<1$, this expression tends to $0$ like $2^{-\epsilon k}$ as $k\to \infty.$  
 We set 
 \[
 \Omega_k:= \omega_k - \alpha_k  \chi_{(V_{2}^J)^c\cap {S^{mn-1}}} ,
 \]
where $\alpha_k  $ is a positive 
constant chosen so that $\Omega_k$ has vanishing integral.
 Note that $\alpha_k\sim  2^{- kn(1/p_J-1)}$ and
 \begin{align}\label{omegakest}
\big\|\Omega_k\big\|_{L^q(\mathbb{S}^{mn-1})}^q &\le 2^{knq(|J|-1/p_J)}\sigma(V_k^J\cap \mathbb{S}^{mn-1})+\alpha_k^q \nonumber\\
&\lesssim 2^{-kn(|J|-1-q(|J|-1/p))}+2^{-knq(1/p_J-1)}\lesssim 2^{-\epsilon' kn}
\end{align}
where $\epsilon':=\min\big\{|J|-1-q(|J|-1/p_J),q(1/p_J-1)\big\}>0$, in view of \eqref{equicondition}, which is equivalent to
$|J|-1-q(|J|-1/p_J)>0$.
We now set  
\[
\Omega:= \sum_{k=2}^\nf k\;\Omega_k 
\]
and then the estimate \eqref{omegakest} clearly yields $\Omega \in L^q(\mathbb{S}^{mn-1})$. 

We now see that for $x\in \mathbb R^n$  satisfying $1<|x|<2$ we have 
\begin{align*}
&\LL_{\Omega}\big(f_1^N,\dots , f_m^N\big)(x) =\sum_{k=2}^{\nf}\, k \LL_{\Omega_k} \big(f_1^N,\dots , f_m^N\big)(x)\nonumber\\
&\qq =\sum_{k=2}^{\nf} k \LL_{\omega_k} \big(f_1^N,\dots , f_m^N\big)(x) -  \sum_{k=2}^{\nf} k\, \alpha_k \,  \LL_{ \chi_{(V_{2}^J)^c\cap {\mathbb{S}^{mn-1}}}  }  \big(f_1^N,\dots , f_m^N\big)(x)\\
&\qq=: \Xi_1(x)-\Xi_2(x)
\end{align*}
The term $\Xi_2$ is an error term for sufficiently large $N$. Indeed,
if $1<|x|<2$ and
 $$|x-y_j|\le\begin{cases}
  2^{-N},& j\in J\\
  2^{-2},& j\in J_m\setminus J
  \end{cases},$$
   then
\begin{equation}\label{rangeofy}
  \frac{3}{4}\sqrt{m}< (|x|-2^{-2})\sqrt{m}   \le |\yyy|\le (|x|+2^{-2})\sqrt{m} < \frac{9}{4}\sqrt{m}
  \end{equation}
and thus, 
\begin{equation*}
  \LL_{ \chi_{(V_{2}^J)^c\cap {\mathbb{S}^{mn-1}}}  }  \big(f_1^N,\dots , f_m^N\big)(x)\le   \LL_{ \chi_{ {\mathbb{S}^{mn-1}}}  }  \big(f_1^N,\dots , f_m^N\big)(x)    \lesssim 2^{-Nn(|J|-1/p_J)},
\end{equation*}
which yields that
\begin{equation}\label{errorest}
 \Xi_2(x)\lesssim 2^{-Nn(|J|-1/p_J)}\sum_{k=2}^{\nf}k \, 2^{-kn(1/p_J-1)}\lesssim 2^{-Nn(|J|-1/p_J)}.
  \end{equation}
Moreover, \eqref{rangeofy} implies that
$$\bigg|\frac{x}{|\yyy|}-\frac{y_j}{|\yyy|}\bigg|\le 2^{-N}|\yyy|^{-1}<\frac{4}{3\sqrt{m}}2^{-N}\le \frac{4}{3\sqrt{|J|}}2^{-N} \q \text{ for all }~j\in J$$
and thus $\yyy/|\yyy|\in V_N^J$.  In other words, $\om_N(\yyy)=2^{kn( |J|-1/p_J )  }$ for $\yyy$ satisfying \eqref{rangeofy}. This combined with the fact that $\LL_{\om_k}\big(f_1^N,\dots,f_m^N\big)\ge0$ shows that
\begin{align*}
\Xi_1(x)&\ge N \LL_{\om_N}\big(f_1^N,\dots,f_m^N\big)(x)\\
&\gtrsim N 2^{Nn(|J|-1/p_J)}\int_{(\bbrn)^m}  f_1^N(y_1)\cdots f_m^N(y_m)   d\yyy\sim N.
\end{align*} 
This, together with \eqref{errorest}, proves that for sufficiently large $N$
$$\LL_{\Omega}\big(f_1^N,\dots , f_m^N\big)(x)\gtrsim N\q\text{when }~1<|x|<2$$
 and thus
$$\big\Vert \LL_{\Omega}(f_1^N,\dots,f_m^N)\big\Vert_{L^p(\bbrn)}\ge \big\Vert \LL_{\Omega}(f_1^N,\dots,f_m^N)\big\Vert_{L^p(\{x\in \bbrn:1<|x|<2\})}\gtrsim N.$$
Since $N$ can be taken arbitrary large,
we conclude the proof.

 \section{Proof of Theorem \ref{mainthm} }\label{pfmainthm1}
We choose a Schwartz function $\Phi^{(m)}$ on $(\bbrn)^m$ such that its Fourier transform $\wh{\Phi^{(m)}}$ is supported in the annulus $\{\xxxi\in (\bbrn)^m: 1/2\le |\xxxi|\le 2\}$ and satisfies
$\sum_{j\in\bbz}\wh{\Phi^{(m)}_j}(\xxxi)=1$ for $\xxxi\not= \000$ where $\wh{\Phi_j^{(m)}}(\xxxi):=\wh{\Phi^{(m)}}(\xxxi/2^j)$.
Recall that $K(\yyy):=\frac{\Omega(\yyy')}{|\yyy|^{mn}}$ for $ \vec y \neq 0$.
For $\ga\in\bbz$  let
 $$ K^{\ga}(\yyy):=\wh{\Phi^{(m)}}(2^{\ga}\yyy)K(\yyy), \quad \yyy\in (\bbrn)^m$$
 and then we observe that $K^\ga(\yyy)=2^{\ga mn} K^0(2^\ga \yyy)$.
 For $\mu\in\bbz$ we define
 \begin{equation}\label{kernelcharacter}
K_{\mu}^{\ga}(\yyy):=\Phi_{{\mu}+\ga}^{(m)}\ast K^{\ga}(\yyy)=2^{\ga mn}[\Phi_{{\mu}}^{(m)}\ast K^{0}](2^\ga \yyy)=2^{\ga mn}K_{\mu}^0(2^{\ga}\yyy)
\end{equation}
and 
$$K_{\mu}(\yyy):=\sum_{\ga\in \bbz}{K_{\mu}^{\ga}(\yyy)}.$$
Then the multilinear operator associated with the kernel $K_{\mu}$ is defined by
 \begin{equation*}
 \LL_{\mu}\big(f_1,\dots,f_m\big)(x):=\int_{(\bbrn)^m}{K_{\mu}(\yyy)\prod_{j=1}^{m}f_j(x-y_j)} ~ d\yyy, \q x\in\bbrn
 \end{equation*} so that
 \begin{align}
 \big\Vert \LL_{\Om}(f_1,\dots,f_m)\big\Vert_{L^{p}(\bbrn)}&\lesssim \Big\Vert \sum_{{\mu}< 0}{\LL_{\mu}(f_1,\dots,f_m)}\Big\Vert_{L^{p}(\bbrn)}\nonumber\\
  &\qq +\Big\Vert \sum_{{\mu}\ge 0}{\LL_{\mu}(f_1,\dots,f_m)}\Big\Vert_{L^{p}(\bbrn)} .\label{secondterm}
 \end{align} 
 
 First of all, using the argument in \cite[Proposition 3]{Gr_He_Ho2018}, we can prove that 
 \begin{equation}\label{e09022}
 \big\Vert \LL_{\mu}(f_1,\dots,f_m)\big\Vert_{L^p(\bbrn)}\lesssim \Vert \Omega\Vert_{L^q(\mathbb{S}^{mn-1})}\Big( \prod_{j=1}^{m}\Vert f_j\Vert_{L^{p_j}(\bbrn)}\Big) \begin{cases}
2^{(mn-\delta){\mu}}, & {\mu}\ge 0\\
2^{(1-\delta){\mu}}, & {\mu}<0
 \end{cases}
 \end{equation}  
 for all $1<q<\infty$ and $0<\delta<1/q'$. 
 This implies that
 \begin{equation*}
 \Big\Vert \sum_{\mu<0}{\LL_{\mu}(f_1,\dots,f_m)}\Big\Vert_{L^{p}(\bbrn)}\lesssim \Vert \Omega\Vert_{L^2(\mathbb{S}^{mn-1})}\prod_{j=1}^{m}\Vert f_j\Vert_{L^{p_j}(\bbrn)}.
 \end{equation*}
 It remains to estimate the term (\ref{secondterm}), but this can be reduced to proving that for $\mu\ge 0$, there exists $\delta_0>0$, possible depending on $p_1,\dots,p_m$, such that
 \begin{equation}\label{mainmaingoal}
 \big\Vert {\LL_{\mu}(f_1,\dots,f_m)}\big\Vert_{L^{p}(\bbrn)}\lesssim 2^{-\delta_0 \mu}\Vert \Om \Vert_{L^\frac{1}{1-s}(\mathbb{S}^{mn-1})}\prod_{j=1}^{m}\Vert f_j\Vert_{L^{p_j}(\bbrn)},
 \end{equation}  which compensate the estimate (\ref{e09022}) for $\mu \ge 0$.
It is already known in \cite[(26)]{Paper1} that 
\begin{equation}\label{222est}
\big\Vert \LL_{\mu}(f_1,\dots,f_m) \big\Vert_{L^{2/m}(\bbrn)} \lesssim_{\wt{\epsilon}}2^{-\wt{\delta} \mu}\Vert \Omega\Vert_{L^2(\mathbb{S}^{mn-1})}\prod_{j=1}^{m}\Vert f_j\Vert_{L^{2}(\bbrn)}
\end{equation}
for some $\wt{\delta}>0$.
In order to achieve the estimate (\ref{mainmaingoal}), we shall use interpolation methods between (\ref{222est}) and the estimates in the following proposition.
\begin{proposition}\label{mainproposition}
Let $1/2\le s<1$, $1/m<p<\infty$, and $1< p_j< \infty$ with $1/p=1/p_1+\cdots+1/p_m$ and $(1/p_1,\dots,1/p_m)\in \HH^m(s)$.
Suppose that $\mu\ge 0$, $\Omega\in L^{\frac{1}{1-s}}(\mathbb{S}^{mn-1})$, and $\int_{\mathbb S^{mn-1}} \Omega \, d\sigma=0$.
Then for any $0<\epsilon<1$, there exists a constant $C_{\epsilon}>0$ such that
\begin{equation}\label{mainpropoest}
\big\Vert \LL_{\mu}(f_1,\dots,f_m)\big\Vert_{L^{p}(\bbrn)}\le C_{\epsilon}2^{\epsilon \mu}\Vert \Om\Vert_{L^{\frac{1}{1-s}}(\mathbb{S}^{mn-1})}\prod_{j=1}^{m}\Vert f_j\Vert_{L^{p_j}(\bbrn)}
\end{equation}
for Schwartz functions $f_1,\dots,f_m$ on $\bbrn$.
\end{proposition}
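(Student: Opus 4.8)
The plan is to prove Proposition~\ref{mainproposition} by an induction on the exponent $q = \frac{1}{1-s} \in [2,\infty)$, following the strategy outlined in the introduction. The base case is $q = 2$ (i.e.\ $s = 1/2$), and the inductive step passes from $q$ to $\frac{2m}{m-1}\cdot\frac{q}{2}$-type exponents via a geometric observation on the polyhedron $\HH^m(s)$. At each stage the argument has two halves: first establish \eqref{mainpropoest} in an ``upper'' regime where all the relevant indices are $\ge 2$ (the \emph{local $L^2$} or \emph{upper $L^r$} case), and then bootstrap to the full region $\HH^m(s)$ by a modified Calder\'on--Zygmund decomposition in the spirit of \cite{He_Park2021}.

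For the base case $q=2$, the heart of the matter is the upper $L^2$ estimate: when $p_1,\dots,p_m \in [2,\infty)$ (so that also $p \ge 2/m$) and $(1/p_1,\dots,1/p_m)\in\HH^m(1/2)$, one shows $\|\LL_\mu(f_1,\dots,f_m)\|_{L^p} \lesssim_\epsilon 2^{\epsilon\mu}\|\Omega\|_{L^2}\prod_j\|f_j\|_{L^{p_j}}$. I would do this by refining the ``column argument'' of \cite{Paper1}: decompose $K_\mu = \sum_{\ga}K_\mu^\ga$ according to frequency scale, group the pieces into columns indexed by which $\xi_j$ carries the top frequency, and on each column exploit a single-scale $L^2\times\cdots\times L^2\to L^{2/m}$ bound with a small loss, together with almost-orthogonality in the remaining frequency variables. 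The condition $(1/p_1,\dots,1/p_m)\in\HH^m(1/2)$ is precisely what makes the relevant Sobolev-type embeddings and the interpolation of the single-scale bounds close with only an $\epsilon\mu$ loss; the inequalities defining $\HH_J^m(s)$ for each $J$ encode the constraint for the column where the frequencies in $\{j : j\in J\}$ are the large ones.

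Granting the upper $L^2$ estimate, the passage to the full region $\HH^m(1/2)$ is the modified Calder\'on--Zygmund argument: for a general tuple $(p_1,\dots,p_m)$ in $\HH^m(s)$, at least one index is $<2$, and one performs a Calder\'on--Zygmund decomposition of the corresponding $f_j$ at a level adapted to the kernel $K_\mu$ (whose support, by \eqref{kernelcharacter}, is essentially dilation-invariant so that $\LL_\mu$ behaves like an operator with kernel supported near scale $2^{-\mu}$ in a suitable sense). The good part is handled by the already-established estimate one notch up in integrability; the bad part is controlled using the cancellation $\int\Omega\,d\sigma = 0$ and the smoothing from $\Phi^{(m)}_{\mu+\ga}$, giving a weak-type bound and, after interpolation, the full strong-type estimate on $\HH^m(s)$ with an $\epsilon\mu$ loss. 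For the inductive step from $q$ to the next exponent, the key point is that if \eqref{mainpropoest} holds on $\HH^m(1/q')$ for the class $L^q$, then a pointwise/duality comparison (using H\"older on the sphere and the single-scale bounds) upgrades it to the upper $L^r$ case with $r = \frac{2m}{m+1}$-type index for the class $L^{\frac{2m}{m-1}}$; then the Calder\'on--Zygmund step again fills out the whole polyhedron $\HH^m(1/q'_{\mathrm{new}})$.

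The main obstacle I expect is the upper $L^2$ estimate in the base case $q=2$ — specifically, running the column argument so that the loss is genuinely $2^{\epsilon\mu}$ for \emph{every} $\epsilon>0$ rather than a fixed power, while simultaneously covering the nonlocally-convex range $p < 1$ (which forces working with $L^p$ quasi-norms and precludes naive duality, unlike the bilinear case). This is where the precise combinatorics of $\HH^m(s) = \bigcap_{J}\HH_J^m(s)$ must be matched term-by-term against the frequency columns, and where the interpolation between the single-scale $L^2\times\cdots\times L^2\to L^{2/m}$ bound \eqref{222est} and crude $L^{p_1}\times\cdots\times L^{p_m}\to L^p$ bounds has to be carried out carefully to avoid accumulating a fixed exponential loss. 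The subsequent Calder\'on--Zygmund bootstrap and the induction on $q$ are technically involved but conceptually routine once the upper $L^2$ case is in hand.
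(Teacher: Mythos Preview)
Your overall architecture---induction on $q=\tfrac{1}{1-s}$ starting at $q=2$, with the upper-$L^2$ column/wavelet estimate as the seed and a modified Calder\'on--Zygmund step to reach the endpoints---is exactly the paper's approach, and the inductive increment $q\mapsto \tfrac{m}{m-1}q$ you indicate matches the paper's sequence $a_\nu=1-\tfrac12(1-\tfrac1m)^\nu$. So the plan is correct in outline.

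However, you have misplaced where the polyhedron $\HH^m(s)$ actually enters, and this matters for executing the proof. In the paper the upper-$L^2$ estimate (Proposition~\ref{lppropo}) is proved on the \emph{entire closed cube} $p_1,\dots,p_m\in[2,\infty]$, with no reference to the constraints $\HH_J^m(s)$; the wavelet decomposition of $\wh{K_\mu^0}$ is split according to how many components $k_j$ of the wavelet index are large (the sets $\UU_l^{\la+\mu}$), and Lemmas~\ref{keylemma3}--\ref{keylemma32} handle each piece using only $p_j\ge 2$. There is no ``matching of $\HH_J^m$ against frequency columns'' at this stage, and if you try to build the argument that way you will be looking for structure that isn't there.

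The role of $\HH^m(s)$ is purely geometric and enters in a separate step you have not identified: Lemma~\ref{convexhull} shows that $\HH^m(s)$ is the convex hull of the $m$ slabs $\mathscr{V}_l^m(s)=\{0<t_l<1,\ 0<t_j<s\text{ for }j\neq l\}$. The Calder\'on--Zygmund argument yields only the \emph{weak} endpoint on the faces $\mathscr{R}_l^m(s)$ (one $p_j=1$, the others $>1/s$); linear Marcinkiewicz interpolation between this and the cube estimate fills each slab $\mathscr{V}_l^m(s)$; and then multilinear interpolation (Lemma~\ref{interpollemma}) together with the convex-hull lemma fills the full polyhedron $\HH^m(s)$. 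This $X(s)\Rightarrow Y(s)\Rightarrow Z(s)\Rightarrow\Sigma(s)$ chain is the mechanism that turns the simple cube bound into the sharp polyhedral region, and is what makes the induction step $\Sigma(a_\nu)\Rightarrow X(a_{\nu+1})$ close (since $\mathcal{C}^m(a_{\nu+1})\subset\HH^m(a_\nu)$). Without the convex-hull characterization your bootstrap would stall at the union of the slabs rather than reaching all of $\HH^m(s)$.
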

The proof of the proposition will be provided in the last section.

\medskip

We present a multilinear version of the Marcinkiewicz interpolation theorem, which is a straightforward corollary of \cite[Theorem 1.1]{Gr_Li_Lu_Zh2012} or \cite[Theorem 3]{Ja1988}.
\begin{lemma}\cite{ Gr_Li_Lu_Zh2012,Ja1988}\label{interpollemma}
Let $0<p_{j}^{\ii}\le \infty$ for each $j\in J_m$ and  $\ii=0,1,\dots, m$, and $0<p^{\ii}\le \infty$ satisfy $1/p^{\ii}=1/p^{\ii}_{1}+\dots+1/p^{\ii}_{m}$ for $\ii=0,1,\dots, m$.
Suppose that $T$ is an $m$-linear operator having the mapping properties
\begin{equation*}
\big\Vert T(f_1,\dots,f_m) \big\Vert_{ L^{p^{\ii},\infty}(\bbrn)}\le M_{\ii}\prod_{j=1}^{m}\Vert f_j\Vert_{L^{p_{j}^{\ii}}(\bbrn)}, \quad \ii=0,1,\dots, m
\end{equation*} 
for Schwartz functions $f_1,\dots, f_m$ on $\bbrn$.
Given  $0<\theta_{\ii}<1$ with $\sum_{\ii=0}^m\theta_{\ii}=1$,  set 
\begin{equation*}
\frac 1{ p_j}= \sum_{\ii=0}^m \frac{ \theta_{\ii}}{p_j^\ii}  , \q j\in J_m,\qquad 
\frac 1{p}=\sum_{\ii=0}^m  \frac{ \theta_{\ii}}{p^\ii }   .
\end{equation*} 
Then for Schwartz functions $f_1,\dots, f_m$ on $\bbrn$ we have
\begin{equation*}
\big\Vert T(f_1,\dots,f_m)\big\Vert_{L^{p,\infty}(\bbrn)}\lesssim 
M_0^{ \theta_0}\cdots M_m^{\theta_m} \prod_{j=1}^{m}\Vert f_j\Vert_{L^{p_j}(\bbrn)}.
\end{equation*}
Also, if the  points $(\frac{1}{p_1^\ii},\dots , \frac{1}{p_m^\ii})$, $0\le \ii\le  m$, form a non trivial open simplex in $\mathbb R^m$, then 
\begin{equation*}
\big\Vert T(f_1,\dots,f_m)\big\Vert_{L^{p}(\bbrn)}\lesssim M_0^{ \theta_0}\cdots M_m^{\theta_m}\prod_{j=1}^{m}\Vert f_j\Vert_{L^{p_j}(\bbrn)} .
\end{equation*} 
\end{lemma}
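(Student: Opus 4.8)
The plan is to obtain Lemma~\ref{interpollemma} as a direct application of the multilinear Marcinkiewicz interpolation theorem in the form of \cite[Theorem 1.1]{Gr_Li_Lu_Zh2012} (equivalently \cite[Theorem 3]{Ja1988}). Set $V_{\ii}:=(1/p_1^{\ii},\dots,1/p_m^{\ii})\in[0,1]^m$ for $\ii=0,1,\dots,m$, and let $P:=(1/p_1,\dots,1/p_m)$. The defining relations $1/p_j=\sum_{\ii}\theta_{\ii}/p_j^{\ii}$ say precisely that $P=\sum_{\ii=0}^m\theta_{\ii}V_{\ii}$, and summing in $j$ then automatically gives $1/p=\sum_{j=1}^m 1/p_j$, consistent with the prescribed value $1/p=\sum_{\ii}\theta_{\ii}/p^{\ii}$. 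Thus $P$ is a convex combination of the $m+1$ data points $V_{\ii}$, and since every $\theta_{\ii}$ is strictly positive, $P$ lies in the relative interior of their convex hull $\mathcal K$; in particular, when the $V_{\ii}$ are affinely independent (the ``non-trivial open simplex'' case), $P$ is an interior point of a genuine $m$-dimensional simplex.

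For the first, weak-type conclusion I would feed the $m+1$ hypotheses $\Vert T(f_1,\dots,f_m)\Vert_{L^{p^{\ii},\infty}}\le M_{\ii}\prod_j\Vert f_j\Vert_{L^{p_j^{\ii}}}$ directly into the cited theorem with the given weights $\theta_{\ii}$; this returns exactly $\Vert T(f_1,\dots,f_m)\Vert_{L^{p,\infty}}\lesssim M_0^{\theta_0}\cdots M_m^{\theta_m}\prod_j\Vert f_j\Vert_{L^{p_j}}$, the geometric-mean dependence on the $M_{\ii}$ being the main point of \cite{Gr_Li_Lu_Zh2012}. The only items to verify are bookkeeping ones: that the theorem there is stated with strong-type domain spaces $L^{p_j^{\ii}}$ and Lorentz target $L^{p^{\ii},\infty}$ (our exact setting), and that infinite exponents $p_j^{\ii}=\infty$ are permitted, with $L^\infty$ read as $L^{\infty,\infty}$; both hold.

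For the second, strong-type conclusion I would use that in the non-degenerate case $P$ lies in the \emph{open} simplex, hence at positive distance from every proper face. One then picks vertices $\widetilde V_{\ii}$ close to $V_{\ii}$, still affinely independent and still inside the open simplex, whose convex hull contains $P$ in its interior; at each $\widetilde V_{\ii}$ the first part of the argument (applied to the original $m+1$ points) already provides a weak-type estimate, and the ``interior-of-a-simplex'' clause of \cite[Theorem 1.1]{Gr_Li_Lu_Zh2012} upgrades these to the strong-type bound $\Vert T(f_1,\dots,f_m)\Vert_{L^p}\lesssim\prod_j\Vert f_j\Vert_{L^{p_j}}$ at $P$; absorbing the perturbation and re-invoking the explicit constant from the first part yields the stated factor $M_0^{\theta_0}\cdots M_m^{\theta_m}$. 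There is no serious obstacle: the content is entirely contained in the cited references, and the ``proof'' amounts to the routine matching of normalizations — identifying the data points $V_{\ii}$, checking the weight and exponent constraints, and separating the degenerate case (which still yields the weak-type estimate because $P\in\mathcal K$) from the non-degenerate one (which additionally yields the strong-type estimate).
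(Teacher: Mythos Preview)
Your proposal is correct and matches the paper's treatment: the paper does not give a proof of this lemma at all, simply stating that it ``is a straightforward corollary of \cite[Theorem 1.1]{Gr_Li_Lu_Zh2012} or \cite[Theorem 3]{Ja1988}.'' Your write-up just makes explicit the routine bookkeeping behind that citation, which is exactly the intended derivation.
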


Now taking Proposition~\ref{mainproposition} temporarily for granted, let us complete the proof of (\ref{mainmaingoal}).
We first fix $p_1,\dots,p_m$ such that $P:=(1/p_1,\dots,1/p_m)\in \HH^m(s)$ is not equal to $T:=(1/2,\dots,1/2)$.
Then there exists the unique point $Q:=(1/q_1,\dots,1/q_m)$ on the boundary of $\HH^m(s)$ such that
$$(1-\theta)T+\theta Q=P$$
for some $0<\theta<1$.
Now let $R:=(1/r_1,\dots,1/r_m)$ be the middle point of $P$ and $Q$. We note that $R$ is inside $\HH^m(s)$ because $\HH^m(s)$ is convex.
Since $R=\frac{1}{2}P+\frac{1}{2}Q$, we have
\begin{equation}\label{trp}
(1-\wt{\theta}) T+\wt{\theta} R=P
\end{equation}
where
$$ 0<\wt{\theta}:=\frac{2}{1/\theta+1}<1.$$
Here, $\wt{\theta}$ definitely depends on the point $P$ as $\theta$ does. 
Moreover, since $R$ is contained in the open set $\HH^m(s)$, we may choose $m$ distinct points $R^1,\dots,R^m \in \HH^m(s)$ such that
$R\not= R^{\ii}$ for all $\ii\in J_m$ and
$$R=\theta_1R^1+\dots+\theta_m R^m$$
for some $0<\theta_1,\dots,\theta_m<1$ with $\theta_1+\dots+\theta_m=1$.
This, together with \eqref{trp}, clearly yields that
\begin{equation}\label{trpm}
P=(1-\wt{\theta})T+\wt{\theta}\theta_1R^1+\dots+\wt{\theta}\theta_mR^m
\end{equation}
where $0<1-\wt{\theta}<1$, $0<\wt{\theta}\theta_\ii<1$, and $(1-\wt{\theta})+\wt{\theta}\theta_1+\dots+\wt{\theta}\theta_m=1$.
From the estimate (\ref{222est}), it follows that
\begin{equation}\label{interpol1}
\Vert \LL_{\mu}\Vert_{L^2\times\cdots\times L^2\to L^{2/m}}\lesssim 2^{-\wt{\delta}\mu}\Vert \Omega\Vert_{L^{\frac{1}{1-s}}(\mathbb{S}^{mn-1})}\q \text{ at }~ T=(1/2,\dots,1/2)
\end{equation}
where the embedding $L^{\frac{1}{1-s}}(\mathbb{S}^{mn-1})\hookrightarrow L^{2}(\mathbb{S}^{mn-1})$ is applied.
On the other hand, letting $\epsilon_P:=\frac{(1-\wt{\theta})\wt{\delta}}{2\wt{\theta}}>0$, Proposition \ref{mainproposition} deduces that for each $\ii\in J_m$
\begin{equation}\label{interpol2}
\Vert \LL_{\mu}\Vert_{L^{r_1^{\ii}}\times\cdots\times L^{r_m^{\ii}}\to L^{r^{\ii}}}\lesssim 2^{\epsilon_P\mu}\Vert \Omega\Vert_{L^{\frac{1}{1-s}}(\mathbb{S}^{mn-1})}\q \text{ at }~ R^{\ii}=(1/r_1^{\ii},\dots,1/r_m^{\ii})\in\mathcal{H}(s)
\end{equation}
where $1/r^{\ii}=1/r_1^{\ii}+\dots+1/r_m^{\ii}$.
Now interpolation, stated in Lemma \ref{interpollemma}, between (\ref{interpol1}) and   $m$ points in (\ref{interpol2}) yields 
$$\Vert \LL_{\mu}\Vert_{L^{p_1}\times \cdots\times L^{p_m}\to L^p}\lesssim 2^{-\mu[(1-\wt{\theta})\wt{\delta}-\wt{\theta}\epsilon_P]}\Vert \Omega\Vert_{L^{\frac{1}{1-s}}(\mathbb{S}^{mn-1})},$$
in view of (\ref{trpm}). 
Here, a straightforward computation shows that
$$(1-\wt{\theta})\wt{\delta}-\wt{\theta}\epsilon_P=\frac{(1-\wt{\theta})\wt{\delta}}{2}.$$
See Figure \ref{figinterpolation} for the interpolation.
 \begin{figure}[h]
\begin{tikzpicture}

\path[fill=red!5] (0,0,3)--(1.56,0,3)--(2.25,0,1.95)--(2.25,0,0)--(2.25,1.495,0)--(1.56,2.25,0)--(0,2.25,0)--(0,2.25,1.95)--(0,1.495,3)--(0,0,3);

\draw[dash pattern= { on 2pt off 1pt}](1.56,1.495,3)--(1.56,2.25,1.95)--(2.25,1.495,1.95)--(1.56,1.495,3);
\draw[dash pattern= { on 2pt off 1pt}] (1.56,1.495,3)--(1.56,0,3)--(2.25,0,1.95)--(2.25,1.495,1.95);
\draw[dash pattern= { on 2pt off 1pt}] (1.56,2.25,1.95)--(1.56,2.25,0)--(2.25,1.495,0)--(2.25,1.495,1.95);
\draw[dash pattern= { on 2pt off 1pt}] (1.56,1.495,3)--(0,1.495,3)--(0,2.25,1.95)--(1.56,2.25,1.95);
\draw[dash pattern= { on 2pt off 1pt}] (2.25,0,1.95)--(2.25,0,0)--(2.25,1.495,0);
\draw[dash pattern= { on 2pt off 1pt}] (0,2.25,1.95)--(0,2.25,0)--(1.56,2.25,0);
\draw[dash pattern= { on 2pt off 1pt}] (1.56,0,3)--(0,0,3)--(0,1.495,3);

\draw[dash pattern= { on 1pt off 1pt}] (0,0,0)--(2.25,0,0);
\draw[dash pattern= { on 1pt off 1pt}] (0,0,0)--(0,2.25,0);
\draw[dash pattern= { on 1pt off 1pt}] (0,0,0)--(0,0,3);


\filldraw[fill=black] (1.125,1.125,1.5)  circle[radius=0.3mm];
\draw[-] (1.125,1.125,1.5)--(-1,1.125,0.8);
\node[left] at (-1,1.125,0.8) {\tiny$T=(\frac{1}{2},\frac{1}{2},\frac{1}{2})$};

\filldraw[fill=black] (2.25,0.5,0.3)  circle[radius=0.3mm];
\draw[-] (2.25,0.5,0.3)--(2.4,0.5,-0.8);
\node[right] at (2.4,0.5,-0.8) {\tiny$Q=(\frac{1}{q_1},\frac{1}{q_2},\frac{1}{q_3})\in \partial\mathcal{H}^3(s)$};

\draw[dash pattern= { on 1pt off 2pt}] (1.125,1.125,1.5)--(2.25,0.5,0.3);

\filldraw[fill=black] (2.025,0.625,0.54)  circle[radius=0.5mm];
\draw[-] (2.025,0.625,0.54)--(2.25,2,-1);
\node[above right] at (2.25,2,-1) {\tiny$P=(\frac{1}{p_1},\frac{1}{p_2},\frac{1}{p_3})\in \mathcal{H}^3(s)$};

\filldraw[fill=black] (2.1375,0.5625,0.42)  circle[radius=0.3mm];
\draw[-] (2.1375,0.5625,0.42)--(2.4,1.2,-1);
\node[above right] at (2.4,1.2,-1) {\tiny$R=(\frac{1}{r_1},\frac{1}{r_2},\frac{1}{r_3})\in \mathcal{H}^3(s)$};


\filldraw[fill=black] (1.125+\gap,1.125,1.5)  circle[radius=0.3mm];
\node[left] at (1.125+\gap,1.125,1.5) {\tiny$T$};

\draw[dash pattern= { on 1pt off 2pt}] (1.125+\gap,1.125,1.5)--(2.19375+\gap,0.53125,0.36);

\filldraw[fill=black] (2.025+\gap,0.625,0.54)  circle[radius=0.5mm];
\node[below left] at (2.025+\gap,0.625,0.54) {\tiny$P$};

\filldraw[fill=black] (2.2+\gap,0.2,0)  circle[radius=0.3mm];
\node[right] at (2.16+\gap,0.22,0) {\tiny$R^1\in \HH^3(s)$};

\filldraw[fill=black] (2.2+\gap,0.9,0.36)  circle[radius=0.3mm];
\node[above] at (2.8+\gap,0.85,0.36) {\tiny$R^2\in \HH^3(s)$};

\filldraw[fill=black] (2+\gap,0.2,0.36)  circle[radius=0.3mm];
\node[below] at (2.4+\gap,0.25,0.36) {\tiny$R^3 \in \HH^3(s)$};

\path[fill=blue!5] (2.2+\gap,0.2,0)--(2.2+\gap,0.9,0.36)--(2+\gap,0.2,0.36)--(2.2+\gap,0.2,0);

\draw[dash pattern= { on 1pt off 2pt}] (2.19375+\gap,0.53125,0.36) --(2.2+\gap,0.2,0);

\draw[dash pattern= { on 1pt off 2pt}] (2.19375+\gap,0.53125,0.36) --(2.2+\gap,0.9,0.36);

\draw[dash pattern= { on 1pt off 2pt}] (2.19375+\gap,0.53125,0.36) --(2+\gap,0.2,0.36);

\draw[dotted] (2.2+\gap,0.2,0)--(2.2+\gap,0.9,0.36)--(2+\gap,0.2,0.36)--(2.2+\gap,0.2,0);

\filldraw[fill=black] (2.19375+\gap,0.53125,0.36)  circle[radius=0.3mm];
\node[right] at (2.1+\gap,0.55,0.36) {\tiny$R$};

\end{tikzpicture}
\caption{$(1-\wt{\theta})T+\wt{\theta}R=P$~~ and ~~ $\theta_1R^1+\theta_2R^2+\theta_3R^3=R$~ for~ $m=3$}\label{figinterpolation}
\end{figure}
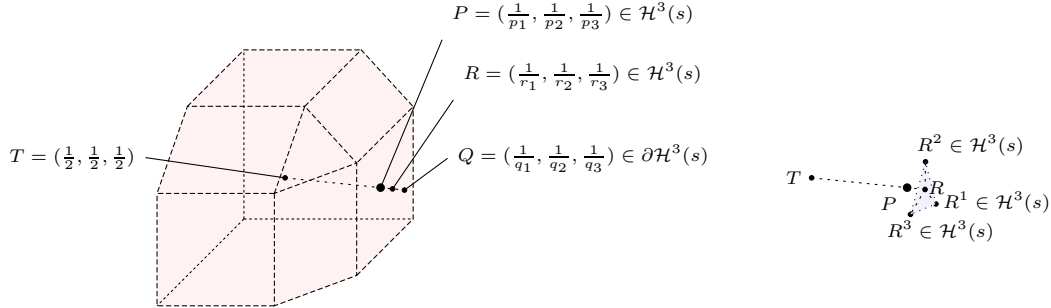

Finally, (\ref{mainmaingoal}) follows from choosing $\delta_0=\frac{(1-\wt{\theta})\wt{\delta}}{2}$ and
this completes the proof of Theorem \ref{mainthm}.

\section{Preliminaries for Proposition \ref{mainproposition}}\label{preliminary}
 Let $\phi$ be a Schwartz function on $\bbrn$ whose Fourier transform is supported in the annulus $\{\xi\in\bbrn: 1/2\le |\xi|\le 2\}$ and satisfies $\sum_{\ga\in\bbz}\wh{\phi_{\ga}}(\xi)=1$ for $\xi\not= 0$, where $\phi_{\ga}:=2^{\ga n}\phi(2^\ga \cdot)$.
For each $\ga \in \bbz$, we define the convolution operator $\La_{\ga}$  by $\La_{\ga}f:=\phi_{\ga}\ast f$.

\subsection{Maximal inequalities}
 Let $\mathcal{M}$  be the Hardy-Littlewood maximal operator, defined by
 \begin{equation*}
 \mathcal{M}f(x):=\sup_{Q: x\in Q}{\frac{1}{|Q|}\int_Q{|f(y)|}dy}
 \end{equation*} where the supremum ranges over all cubes containing $x$, and for $0<t<\infty$ let $\mathcal{M}_tf:=\big(  \mathcal{M}(|f|^t) \big)^{1/t}$.
 Then the maximal operator $\mathcal{M}_t$ is bounded on $L^p(\bbrn)$ for $t<p\le \infty$ and more generally, for $t<p,q<\infty$, we have
\begin{equation}\label{hlmax}
\Big\Vert  \Big(\sum_{\ga\in\bbz}{(\mathcal{M}_{t}f_{\ga})^q}\Big)^{1/{q}} \Big\Vert_{L^p(\bbrn)} \lesssim  \Big\Vert \Big( \sum_{\ga\in\bbz}{|f_{\ga}|^q}  \Big)^{1/{q}}  \Big\Vert_{L^p(\bbrn)}.
\end{equation}   
See \cite[Theorem 5.6.6]{CFA}.
The inequality (\ref{hlmax}) also holds for $0<p\le \infty$ and $q=\infty$.

\subsection{Compactly supported wavelets}
For any fixed $L\in\bbn$ one can construct real-valued compactly supported functions $\psi_F, \psi_M$ in  $\mathscr{C}^L(\bbr)$ satisfying the following properties:
  $\Vert \psi_F\Vert_{L^2(\bbr)}=\Vert \psi_M\Vert_{L^2(\bbr)}=1$, 
  $\int_{\bbr}{x^{\alpha}\psi_M(x)}dx=0$ for all $0\le \alpha \le L$, and moreover, 
if     $\Psi_{\GGG}$ is a function on $\bbr^{mn}$, defined by
$$\Psi_{\GGG}(\xxx):=\psi_{g_1}(x_1)\cdots \psi_{g_{mn}}(x_{mn})$$
for $\xxx:=(x_1,\dots,x_{mn})\in \bbr^{mn}$ and $\GGG:=(g_1,\dots,g_{mn})$ in the set     
 $$\II:=\big\{\GGG:=(g_1,\dots,g_{mn}):g_{\ii}\in\{F,M\} \big\},$$
then the family of functions
\begin{equation*}
\bigcup_{\la\in\bbn_0}\bigcup_{\kkk\in \bbz^{mn}}\big\{ 2^{\la{mn/2}}\Psi_{\GGG}(2^{\la}\xxx-\kkk):\GGG\in \II^{\la}\big\}
\end{equation*}
forms an orthonormal basis of $L^2(\bbr^{mn})$,
where $\II^0:=\II$ and  for $\la \ge 1$, we set  $\II^{\la}:=\II\setminus \{(F,\dots,F)\}$.

It is known in \cite[Theorem 1.64]{Tr2006} that
if $L$ is sufficiently large, then every  $H\in L^q(\bbr^{mn})$ with  $1<q<\infty$   can be represented as
\begin{equation}\label{daubechewavelet}
H(\xxx)=\sum_{\la\in\bbn_0}\sum_{\GGG\in\II^{\la}}\sum_{\kkk\in \bbz^{mn}}b_{\GGG,\kkk}^{\la}2^{\la mn/2}\Psi_{\GGG}(2^{\la} \xxx -\kkk)
\end{equation}
with the right hand side convergence in $\mathscr S'(\bbr^{mn})$,
and 
\begin{equation}\label{lqlqs}
\Big\Vert \Big(\sum_{\vec G\in\II^{\la}}\sum_{ \vec k\in\bbz^{mn}} \big|b_{\vec G,\vec k}^\la \Psi^\la_{\vec G,\vec k}\big|^2\Big)^{1/2} \Big\Vert_{L^q(\bbr^{mn})} \lesssim \|H\|_{L^q(\bbr^{mn})} 
\end{equation}
where $ \Psi^\la_{\vec G,\vec k}(\vec x)=2^{\la mn/2}\Psi_{\GGG}(2^{\la} \xxx -\kkk)$,   
\begin{equation*}
b_{\GGG,\kkk}^{\la}:=\int_{\bbr^{mn}}{H(\xxx)\Psi^\la_{\vec G,\vec k}(\vec x)}d\xxx .
\end{equation*}
Moreover, it follows from (\ref{lqlqs}) and the disjoint support property  of the $\Psi^\la_{\vec G,\vec k}$'s that 
\begin{align}
\big\Vert \big\{b_{\GGG,\kkk}^{\la}\big\}_{\kkk\in \bbz^{mn}}\big\Vert_{\ell^{q}}
\approx&\Big(2^{\la mn(1-q/2)}\int_{\bbr^{mn}}\Big( \sum_{\vec k} \big|b^\la_{\vec G,\vec k}\Psi^\la_{\vec G,\vec k}(\vec x)\big|^2\Big)^{q/2}d\vec x\Big)^{1/q}\notag \\
\lesssim& \; 2^{-\la mn (1/2-1/q)}\Vert H\Vert_{L^q(\bbr^{mn})}. \label{lqestimate}
\end{align} 

Throughout, we will consistently use the notation  $G_j:=(g_{(j-1)n+1},\dots,g_{jn})$ for an element of $ \{F,M\}^n$  and  
$\Psi_{G_j}(\xi_j):=\psi_{g_{(j-1)n+1}}(\xi_{j}^1)\cdots \psi_{g_{jn}}(\xi_j^n)$ for $\xi_j:=(\xi_j^1,\dots,\xi_j^n)\in \bbrn$
so that $\GGG=(G_1,\dots,G_m)\in (\{F,M\}^n)^m$ and $
\Psi_{\GGG}(\xxxi)=\Psi_{G_1}(\xi_1)\cdots \Psi_{G_m}(\xi_m).
$
For each $\kkk:=(k_1,\dots,k_m)\in (\bbzn)^m$ and $\la\in \bbn_0$, let 
$$
\Psi_{G_j,k_j}^{\la}(\xi_j):=2^{\la n/2}\Psi_{G_j}(2^{\la}\xi_j-k_j), \qq 1\le j\le m
$$
and
$$
\Psi_{\GGG,\kkk}^{\la}(\xxxi\,):=\Psi_{G_1,k_1}^{\la}(\xi_1)\cdots \Psi_{G_m,k_m}^{\la}(\xi_m).
$$
We also assume that the  support of 
 $\psi_{g_{j}}$ is contained in $\{\xi\in \bbr: |\xi|\le C_0 \}$ for some $C_0>1$,
which implies that
\begin{equation}\label{supppsi}
\supp (\Psi_{G_j,k_j}^\la)\subset \big\{\xi_j\in\bbrn: |2^{\la}\xi_j-k_j|\le C_0\sqrt{n}\big\}.
\end{equation}
In other words, the support of $\Psi_{G_j,k_j}^\la$ is contained in the ball centered at $2^{-\la}k_j$ and radius $C_0\sqrt{n}2^{-\la}$.


\subsection{Columns and Projections}
We now introduce a few notions and   related combinatorial properties. 
For a  fixed $\kkk\in (\bbzn)^m$, $l\in J_m=\{1,2,\dots,m\}$, and $1\leq j_1<  \dots< j_l\le m$ let 
$$\kkk^{j_1,\dots,j_l}:=(k_{j_1},\dots,k_{j_l})$$ denote the vector in $(\bbzn)^l$ consisting of the $j_1,\dots,j_l$ components of $\kkk$ and 
$\kkk^{*j_1,j_2,\dots,j_l}$ stand for the vector in $(\bbzn)^{m-l}$, consisting of $\kkk$ except for
 the $j_1$, \dots, $j_l$ components
(e.g.  $\kkk^{*1,\dots,j}=\kkk^{j+1,\dots,m}=(k_{j+1},\dots,k_m)\in (\bbzn)^{m-j}$).
For any sets $\UU$ in $(\bbzn)^m$, $j\in J_m$, and $1\le j_1<\dots< j_l \leq m$ let 
$$\mathcal{P}_j\UU:=\big\{k_j\in\bbzn:\kkk \in\UU ~\text{ for some }\kkk^{*j}\in(\bbzn)^{m-1} \big\}$$
$$\mathcal{P}_{*j_1,\dots,j_l}\UU:=\big\{\kkk^{*j_1,\dots,j_l}\in(\bbzn)^{m-l}:\kkk\in \UU ~\text{ for some }k_{j_1},\dots, k_{j_l}\in\bbzn \big\}$$ 
be the projections of $\UU$ onto the $k_j$-column and $\kkk^{*j_1,\dots,j_l}$-plane, respectively.
For a fixed $\kkk^{*j_1,\dots,j_l}\in \mathcal{P}_{*j_1,\dots,j_l}\UU$,  we define 
\begin{equation*}
Col_{\kkk^{*j_1,\dots,j_l}}^{\UU}:=\{\kkk^{j_1,\dots,j_l}\in (\bbzn)^l: \kkk=(k_1,\dots,k_m) \in\UU\}.
\end{equation*}
Then we observe that 
\begin{equation}\label{disjointdecom1}
\sum_{\kkk\in \UU}\cdots = \sum_{\kkk^{*j_1,\dots,j_l}\in \mathcal{P}_{*j_1,\dots,j_l}\UU}\Big( \sum_{\kkk^{j_1,\dots,j_l}\in Col_{\kkk^{*j_1,\dots,j_l}}^{\UU}}\cdots\Big).
\end{equation}
For more details of these notations and their applications, we refer to \cite{Paper1}, while similar ideas go back to \cite{Gr_He_Ho2018}.

 \section{Key Lemmas for the proof of Proposition \ref{mainproposition}} \label{keylemmas}
 
Let $C_0$ be the constant that appeared in \eqref{supppsi}.
 For $\la \in\bbn_0$ satisfying $C_0\sqrt{n}\le 2^{\la+1}$, let
 $$\WW^{\la}:=\big\{ k\in\bbzn: 2C_0\sqrt{n}\le |k|\le 2^{\la+2}     \big\}.$$
 For $\la\in \bbn_0$, $G\in \{F,M\}^n$, $k\in\bbzn$, and $\la\in\bbz$,
we define the operator $L_{G,k}^{\la,\ga}$ via the Fourier transform by
\begin{equation}\label{lgklg}
\big( L_{G,k}^{\la,\ga}f\big)^{\wedge}(\xi):= \Psi_{G,k}^{\la}(\xi/2^{\ga})\wh{f}(\xi), \qq \ga\in\bbz.
\end{equation} 
Then we observe that 
\begin{equation}\label{lmaximalbound}
\big|L_{G,k}^{\la,\ga}f(x)\big|\lesssim 2^{\la n/2}\mathcal{M}f(x) \q \text{ uniformly in the parameters}~ \la,G,k,\ga
\end{equation}
and
 for  $k\in \WW^{\la+\mu}$ with $C_0\sqrt{n}\le 2^{\la+\mu+1}$,
\begin{equation}\label{lgkest}
L_{G,k}^{\la,\ga}f=L_{G,k}^{\la,\ga}f^{\la,\ga,{\mu}}   
\end{equation}
due to the support of $\Psi_{G}$, 
where 
\begin{equation*}
{f^{\la,\ga,\mu}}:=\sum_{j=-\la+c_0}^{\mu+3}\La_{\ga+j}f
\end{equation*} for some $c_0\in \bbn$, depending on $C_0$ and $n$.
It is easy to check that for $1<p<\infty$
\begin{equation*}
\Big\Vert \Big(\sum_{\ga\in\bbz}\big|f^{\la,\ga,\mu} \big|^2 \Big)^{1/2}\Big\Vert_{L^p(\bbrn)}\leq \sum_{j=-\la+c_0}^{\mu+3}\Big\Vert \Big(\sum_{\ga\in\bbz}\big|\La_{\ga+j} f \big|^2 \Big)^{1/2}\Big\Vert_{L^p(\bbrn)}\lesssim (\mu+\la+4)\Vert f\Vert_{L^p(\bbrn)},
\end{equation*}
where the triangle inequality and the Littlewood-Paley theory are applied in the inequalities.
 
 \begin{lemma}\label{variantlp}
 Let $2\le p< \infty$, $1<t<2$,  $u\in\bbzn$, and $s\ge 0$.
 Then we have
 \begin{equation}\label{variantlpest}
 \Big\Vert \Big(\sum_{\ga\in\bbz}\big\Vert\La_{\ga} f(x-2^{s-\ga}\cdot) \;  \Psi_G^{\vee}\big\Vert_{L^t(u+[0,1)^n)}^2 \Big)^{1/2}\Big\Vert_{L^p(x)}\lesssim_M \frac{1}{(1+|u|)^M} \Vert f\Vert_{L^p(\bbrn)}
 \end{equation}
 uniformly in $s\ge 0$.
 \end{lemma}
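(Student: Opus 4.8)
The plan is to reduce the vector-valued, partially-local estimate \eqref{variantlpest} to a standard square-function bound by exploiting the rapid decay of $\Psi_G^\vee$ away from the origin and the nice localization of $\La_\ga f$ in frequency. First I would note that $\Psi_G^\vee$ is a Schwartz function on $\bbrn$ (it is a tensor product of one-dimensional Schwartz functions $\psi_{g_i}^\vee$), so for every $M'$ we have $|\Psi_G^\vee(y)|\lesssim_{M'}(1+|y|)^{-M'}$. Thus on the cube $u+[0,1)^n$ the argument $2^{s-\ga}(x-\cdot)$ appearing inside $\La_\ga f(x-2^{s-\ga}\cdot)$ forces, after changing variables $y=x-2^{s-\ga}z$ with $z\in u+[0,1)^n$, that $\Psi_G^\vee(z)$ contributes a factor controlled by $(1+|u|)^{-M'}$ together with integrable tails; this is where the claimed $(1+|u|)^{-M}$ gain comes from. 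So the first step is to split off this decay factor and be left with estimating, for each fixed $\ga$, the local $L^t$ norm on $u+[0,1)^n$ of $z\mapsto \La_\ga f(x-2^{s-\ga}z)\,\Psi_G^\vee(z)$.

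Next I would bound the inner $L^t(u+[0,1)^n)$ norm pointwise in $x$ by a Hardy–Littlewood maximal expression. Since $1<t<2$, Hölder on the unit cube gives $\Vert \La_\ga f(x-2^{s-\ga}\cdot)\Psi_G^\vee\Vert_{L^t(u+[0,1)^n)}\lesssim \sup_{z\in u+[0,1)^n}|\Psi_G^\vee(z)|\cdot\Vert \La_\ga f(x-2^{s-\ga}\cdot)\Vert_{L^t(u+[0,1)^n)}$, and the latter, after undoing the dilation, is at most a constant times $\big(\mathcal M_t(\La_\ga f)\big)(x)$ — here one uses that $2^{s-\ga}(u+[0,1)^n)$ is a cube whose position relative to $x$ is under control, so its average is dominated by the maximal function at $x$; crucially the implied constant is \emph{uniform in $s\ge 0$} because the maximal operator is dilation invariant. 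Combining with the decay factor $(1+|u|)^{-M'}$ from the previous paragraph (taking $M'$ large enough to absorb both the pointwise bound and the summable tails, and to leave $(1+|u|)^{-M}$), the left side of \eqref{variantlpest} is dominated by
\[
(1+|u|)^{-M}\Big\Vert \Big(\sum_{\ga\in\bbz}\big(\mathcal M_t(\La_\ga f)\big)^2\Big)^{1/2}\Big\Vert_{L^p(\bbrn)}.
\]

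Finally I would invoke the Fefferman–Stein maximal inequality \eqref{hlmax} with exponents $t<p,2<\infty$ (valid since $t<2\le p<\infty$) to replace $\mathcal M_t(\La_\ga f)$ by $\La_\ga f$ inside the square function, and then the Littlewood–Paley theorem (recalling $\phi$ is a Littlewood–Paley decomposition and $2\le p<\infty$) to conclude $\big\Vert\big(\sum_\ga|\La_\ga f|^2\big)^{1/2}\big\Vert_{L^p}\lesssim\Vert f\Vert_{L^p}$. Putting these together yields \eqref{variantlpest}. The main obstacle — and the step needing the most care — is the first one: tracking the interaction between the spatial shift $x-2^{s-\ga}z$, the Schwartz decay of $\Psi_G^\vee$, and the cube $u+[0,1)^n$ so as to extract exactly the factor $(1+|u|)^{-M}$ uniformly in both $\ga$ and $s$, while keeping everything summable in $\ga$ and dominated by a single maximal square function; the remaining steps are standard applications of \eqref{hlmax} and Littlewood–Paley theory.
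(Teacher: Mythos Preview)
Your approach is essentially the same as the paper's: pull out the Schwartz decay of $\Psi_G^\vee$ on $u+[0,1)^n$, bound the remaining local $L^t$ norm by $\mathcal M_t(\La_\ga f)(x)$ after a change of variables, and finish with the Fefferman--Stein inequality \eqref{hlmax} together with Littlewood--Paley theory. One point deserves more care: the dilated cube $x-2^{s-\ga}(u+[0,1)^n)$ need not contain $x$, so the bound by $\mathcal M_t(\La_\ga f)(x)$ is not uniform in $u$ but actually costs a factor $(1+|u|)^{n/t}$ (one enlarges to a ball of radius $\sim 2^{s-\ga}(1+|u|)$ about $x$); the paper makes this explicit and compensates by taking the decay exponent $M'=M+n/t$, which is exactly what your phrase ``$M'$ large enough to absorb'' should mean.
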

 \begin{proof}
 Using the fact that
 $$\sup_{y\in u+[0,1)^n}|\Psi_G^{\vee}(y)|\lesssim_{M,t} \frac{1}{(1+|u|)^{M+n/t}} \q \text{ for any }~M>0,$$
 we see that
 \begin{align*}
\big\Vert\La_{\ga} f(x-2^{s-\ga}\cdot) \;  \Psi_G^{\vee}\big\Vert_{L^t(u+[0,1)^n)}&=\Big( \int_{u+[0,1)^n}  \big| \La_{\ga} f(x-2^{s-\ga}y)\big|^t |\Psi_G^{\vee}(y)|^t   dy\Big)^{1/t}\\
&\lesssim \frac{1}{(1+|u|)^{M+n/t}}\Big( \int_{u+[0,1)^n}  \big| \La_{\ga} f(x-2^{s-\ga}y)\big|^t    dy\Big)^{1/t}.
 \end{align*}
  Moreover, using a change of variables, we have
 \begin{align*}
 \Big(\int_{u+[0,1)^n}{\big|\La_{\ga} f(x-2^{s-\ga}y)\big|^{t}}dy \Big)^{1/t}&\lesssim \Big(\frac{1}{2^{(s-\ga)n}}\int_{|y|\le \sqrt{n}(1+|u|)2^{s-\ga}}{\big| \La_{\ga} f(x-y)\big|^{t}}dy \Big)^{1/t}\\
 &\lesssim (1+|u|)^{n/t}\mathcal{M}_{t}\La_{\ga} f(x),
 \end{align*}
 which proves
 \begin{equation*}
 \big\Vert\La_{\ga} f(x-2^{s-\ga}\cdot) \;  \Psi_G^{\vee}\big\Vert_{L^t(u+[0,1)^n)}\lesssim_{M,t}\frac{1}{(1+|u|)^{M}}\mathcal{M}_t\La_{\ga}f(x).
 \end{equation*}
Now the left-hand side of (\ref{variantlpest}) is less than a constant times
\begin{align*}
\frac{1}{(1+|u|)^{M}}\big\Vert \big\{ \mathcal{M}_t\La_{\ga}f\big\}_{\ga\in\bbz}\big\Vert_{L^p(\ell^2)}\lesssim \frac{1}{(1+|u|)^{M}}\big\Vert \big\{ \La_{\ga}f\big\}_{\ga\in\bbz}\big\Vert_{L^p(\ell^2)}\sim \frac{1}{(1+|u|)^{M}}\Vert f\Vert_{L^p(\bbrn)}
\end{align*}
 by using the maximal inequality (\ref{hlmax}) and the Littlewood-Paley theory.
 \end{proof}

 \begin{lemma}\label{keylemma3}
 Let $2\le p< \infty$, $0<\epsilon<1$ and $\la,\mu\in\bbz$ with $2^{\la+\mu+1}\ge C_0\sqrt{n}$. Suppose that $E^{\la+\mu}$ is a subset of  $\WW^{\la+\mu}$.
 Let $\{b_k^{\ga}\}_{k\in\bbzn}$ be a sequence of complex numbers and
 \begin{equation*}
 \BB_2:= \sup_{\ga\in\bbz}\big\Vert \{b_k^{\ga}\}_{k\in\bbzn}\big\Vert_{\ell^2} \quad \text{ and }\quad \BB_{\infty}:= \sup_{\ga\in\bbz}\big\Vert \{b_k^{\ga}\}_{k\in\bbzn}\big\Vert_{\ell^{\infty}}.
 \end{equation*}
 Then there exists $C_{\epsilon}>0$ such that
  \begin{equation}\label{mainest3}
 \Big\Vert  \Big(\sum_{\ga\in\bbz}{\Big| \sum_{k\in E^{\la+\mu}}b_k^{\ga} L_{G,k}^{\la,\ga}f\Big|^2} \Big)^{1/2}\Big\Vert_{L^{p}(\bbrn)}\le C_{\epsilon} 2^{\la n/2} (\la+\mu+4)\BB_2^{1-\epsilon} \BB_{\infty}^{\epsilon}|E^{\la+\mu}|^{\epsilon}\Vert f\Vert_{L^p(\bbrn)}
 \end{equation}
 for $f\in\mathscr{S}(\bbrn)$.
 \end{lemma}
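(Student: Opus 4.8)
The plan is to remove the coefficients $b_k^{\ga}$ by a trigonometric‑polynomial argument, reduce the $\ell^2$‑valued quantity to the single‑cube estimate of Lemma~\ref{variantlp}, and then interpolate two elementary bounds for trigonometric polynomials. First we record a kernel representation: from \eqref{lgklg} and the definition of $\Psi_{G,k}^{\la}$ one has $(L_{G,k}^{\la,\ga}f)^{\wedge}(\xi)=2^{\la n/2}\Psi_G(2^{\la-\ga}\xi-k)\wh f(\xi)$, so a change of variables gives the pointwise identity
\begin{equation*}
L_{G,k}^{\la,\ga}f(x)=2^{\la n/2}\int_{\bbrn}e^{2\pi i z\cdot k}\,\Psi_G^{\vee}(z)\,f(x-2^{\la-\ga}z)\,dz .
\end{equation*}
Since $E^{\la+\mu}\subset\WW^{\la+\mu}$ and $2^{\la+\mu+1}\ge C_0\sqrt n$, identity \eqref{lgkest} permits replacing $f$ by $f^{\la,\ga,\mu}=\sum_{j=-\la+c_0}^{\mu+3}\La_{\ga+j}f$ inside every operator $L_{G,k}^{\la,\ga}$ in \eqref{mainest3}; by the triangle inequality in $L^p(\ell^2(\ga))$ it then suffices to bound, for each fixed $j$ with $-\la+c_0\le j\le\mu+3$, the quantity $\big\Vert(\sum_{\ga}|\sum_{k\in E^{\la+\mu}}b_k^{\ga}L_{G,k}^{\la,\ga}\La_{\ga+j}f|^2)^{1/2}\big\Vert_{L^p}$ by $C_{\epsilon}2^{\la n/2}\BB_2^{1-\epsilon}\BB_{\infty}^{\epsilon}|E^{\la+\mu}|^{\epsilon}\Vert f\Vert_{L^p}$, since the number of such $j$ is at most $\la+\mu+4$, the very factor appearing in \eqref{mainest3}.

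Fix such a $j$ and set $t:=\tfrac{2}{1+\epsilon}\in(1,2)$, so that $t'=\tfrac{2}{1-\epsilon}\in(2,\infty)$; here both inclusions use $0<\epsilon<1$. Let $P^{\ga}(z):=\sum_{k\in E^{\la+\mu}}b_k^{\ga}e^{2\pi i z\cdot k}$, a finite, $\bbzn$‑periodic trigonometric polynomial. Pulling the $k$‑sum inside the integral, decomposing $\bbrn=\bigcup_{u\in\bbzn}(u+[0,1)^n)$, applying Hölder with exponents $t'$ and $t$ on each unit cube, and using $\Vert P^{\ga}\Vert_{L^{t'}(u+[0,1)^n)}=\Vert P^{\ga}\Vert_{L^{t'}([0,1)^n)}$ for every $u$ (periodicity), we obtain
\begin{equation*}
\Big|\sum_{k\in E^{\la+\mu}}b_k^{\ga}L_{G,k}^{\la,\ga}\La_{\ga+j}f(x)\Big|\le 2^{\la n/2}\,\Vert P^{\ga}\Vert_{L^{t'}([0,1)^n)}\sum_{u\in\bbzn}\big\Vert\Psi_G^{\vee}(\cdot)\,\La_{\ga+j}f(x-2^{\la-\ga}\cdot)\big\Vert_{L^t(u+[0,1)^n)} .
\end{equation*}
Taking the $\ell^2(\ga)$‑norm and pulling out $\mathcal A:=\sup_{\ga}\Vert P^{\ga}\Vert_{L^{t'}([0,1)^n)}$ via $\Vert\{a^{\ga}c^{\ga}\}\Vert_{\ell^2}\le(\sup_{\ga}|a^{\ga}|)\Vert\{c^{\ga}\}\Vert_{\ell^2}$, then moving the $u$‑sum outside the $\ell^2(\ga)$‑norm and afterwards outside the $L^p(x)$‑norm by Minkowski's inequality, and finally invoking Lemma~\ref{variantlp} with $s=\la+j$ (legitimate since $\la+j\ge c_0\ge1$) after the reindexing $\ga\mapsto\ga+j$, we bound the quantity above in $L^p$ by a constant (depending on $M$) times $2^{\la n/2}\mathcal A\sum_{u\in\bbzn}(1+|u|)^{-M}\Vert f\Vert_{L^p}$, which is $\lesssim 2^{\la n/2}\mathcal A\Vert f\Vert_{L^p}$ once $M=n+1$ is chosen.

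It remains to estimate $\mathcal A$. For each $\ga$, Hölder's inequality on the probability space $[0,1)^n$, interpolating $L^{t'}$ between $L^2$ and $L^{\infty}$ (note $2/t'=1-\epsilon$), gives $\Vert P^{\ga}\Vert_{L^{t'}}\le\Vert P^{\ga}\Vert_{L^2}^{1-\epsilon}\Vert P^{\ga}\Vert_{L^{\infty}}^{\epsilon}$; Parseval gives $\Vert P^{\ga}\Vert_{L^2}=(\sum_{k\in E^{\la+\mu}}|b_k^{\ga}|^2)^{1/2}\le\BB_2$, and the triangle inequality gives $\Vert P^{\ga}\Vert_{L^{\infty}}\le\sum_{k\in E^{\la+\mu}}|b_k^{\ga}|\le|E^{\la+\mu}|\,\BB_{\infty}$. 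Hence $\mathcal A\le\BB_2^{1-\epsilon}(|E^{\la+\mu}|\BB_{\infty})^{\epsilon}=\BB_2^{1-\epsilon}\BB_{\infty}^{\epsilon}|E^{\la+\mu}|^{\epsilon}$, and combining the three steps yields \eqref{mainest3}. The argument is elementary; the point requiring care is keeping track of the \emph{double} dependence on $\ga$ — through the coefficients $b_k^{\ga}$ and through the Littlewood--Paley pieces of $f$ — so that precisely $\BB_2=\sup_{\ga}\Vert\{b_k^{\ga}\}\Vert_{\ell^2}$ and $\BB_{\infty}=\sup_{\ga}\Vert\{b_k^{\ga}\}\Vert_{\ell^{\infty}}$ emerge. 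The only genuine loss, the factor $\la+\mu+4$, is forced by splitting $f^{\la,\ga,\mu}$ into its $O(\la+\mu)$ Littlewood--Paley constituents, and the choice $t'=\tfrac{2}{1-\epsilon}$ is exactly what converts interpolation between the $\ell^2$‑Parseval bound and the $\ell^1$‑triangle bound into the stated exponents.
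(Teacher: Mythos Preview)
Your proof is correct and follows essentially the same approach as the paper: replace $f$ by $f^{\la,\ga,\mu}$ via \eqref{lgkest}, split into the $O(\la+\mu)$ Littlewood--Paley pieces, write the kernel representation producing the trigonometric polynomial $P^{\ga}$ (the paper calls it $B^{\ga}_{E^{\la+\mu}}$), apply H\"older with exponents $t=\tfrac{2}{1+\epsilon}$, $t'=\tfrac{2}{1-\epsilon}$ on each unit cube $u+[0,1)^n$, interpolate $\Vert P^{\ga}\Vert_{L^{t'}}$ between the Parseval and triangle bounds, and finish with Lemma~\ref{variantlp}. The steps, the choice of $t$, and the handling of the $u$-sum are identical to the paper's argument.
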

 \begin{proof}
 Using (\ref{lgkest}), the left-hand side of (\ref{mainest3}) is less than
\begin{equation*}
\sum_{j=-\la+c_0}^{\mu+3}\Big\Vert \Big(\sum_{\ga\in\bbz} \Big| \sum_{k\in E^{\la+\mu}}b_k^{\ga}L_{G,k}^{\la,\ga}\La_{\ga+j}f \Big|^2\Big)^{1/2}\Big\Vert_{L^{p}(\bbrn)}.
\end{equation*} 
Let $t:=\frac{2}{1+\epsilon}$ so that $1<t<2<t'=\frac{2}{1-\epsilon}$.
 Then we apply H\"older's inequality to obtain
  \begin{align*}
& \Big| \sum_{k\in E^{\la+\mu}} b^{\ga}_k L_{G,k}^{\la,\ga}\La_{\ga+j}f(x)\Big| \\
&\le 2^{\la n/2}\int_{\bbrn}{\big| B^{\ga}_{E^{\la+\mu}}(y)\big|\big|\La_{\ga+j}f(x-2^{\la-\ga}y) \big|  \big| \Psi^{\vee}_G(y)\big|}dy\\
&= 2^{\la n/2}\sum_{u\in\bbzn}\int_{u+[0,1)^n}\big| B^{\ga}_{E^{\la+\mu}}(y)\big|\big|\La_{\ga+j}f(x-2^{\la-\ga}y) \big|  \big| \Psi^{\vee}_G(y)\big| dy\\
&\le 2^{\la n/2}\sum_{u\in\bbzn}\big\Vert B_{E^{\la+\mu}}^{\ga}\big\Vert_{L^{t'}(u+[0,1)^n)}\big\Vert \La_{\ga+j}f(x-2^{\la-\ga}\cdot) \Psi_G^{\vee}\big\Vert_{L^t(u+[0,1)^n)}
 \end{align*}
 where
  \begin{equation*}
  B^{\ga}_{E^{\la+\mu}}(x):=\sum_{k\in E^{\la+\mu}}b^{\ga}_k e^{2\pi i\langle x,k\rangle}.
  \end{equation*}
 We first observe that
 \begin{align}\label{bega}
 \big\Vert B^{\ga}_{E^{\la+\mu}}\big\Vert_{L^{t'}(u+[0,1)^n)}&=\big\Vert B^{\ga}_{E^{\la+\mu}}\big\Vert_{L^{t'}([0,1)^n)}\nonumber\\
 &\le \big\Vert B^{\ga}_{E^{\la+\mu}}\big\Vert_{L^2([0,1)^n)}^{2/t'}\big\Vert B^{\ga}_{E^{\la+\mu}}\big\Vert_{L^{\infty}([0,1]^n)}^{1-2/t'} \le \BB_2^{1-\epsilon}\BB_{\infty}^{\epsilon}|E^{\la+\mu}|^{\epsilon}.
 \end{align}
 Therefore, the left-hand side of (\ref{mainest3}) is dominated by a constant times
 \begin{align*}
& 2^{\la n/2}\BB_2^{1-\epsilon}\BB_{\infty}^{\epsilon}|E^{\la+\mu}|^{\epsilon}\sum_{j=-\la+c_0}^{\mu+3}\Big\Vert \Big(\sum_{\ga\in\bbz}\Big( \sum_{u\in\bbzn}\big\Vert \La_{\ga+j}f(x-2^{\la-\ga}\cdot) \Psi_G^{\vee}\big\Vert_{L^t(u+[0,1)^n)}     \Big)^2 \Big)^{1/2}     \Big\Vert_{L^p(\bbrn)}\\
&\le 2^{\la n/2}\BB_2^{1-\epsilon}\BB_{\infty}^{\epsilon}|E^{\la+\mu}|^{\epsilon}\sum_{u\in\bbzn}\sum_{j=-\la+c_0}^{\mu+3}\Big\Vert \Big(\sum_{\ga\in\bbz}\big\Vert \La_{\ga}f(x-2^{\la+j-\ga}\cdot) \Psi_G^{\vee}\big\Vert_{L^t(u+[0,1)^n)}^2     \Big)^{1/2}     \Big\Vert_{L^p(\bbrn)}.
 \end{align*}
 Now it follows from Lemma \ref{variantlp} that the preceding expression is controlled by a constant multiple of
 \begin{align*}
 2^{\la n/2}\BB_2^{1-\epsilon}\BB_{\infty}^{\epsilon}|E^{\la+\mu}|^{\epsilon}(\la+\mu+4)\Vert f\Vert_{L^p(\bbrn)}\sum_{u\in\bbzn}\frac{1}{(1+|u|)^M}
 \end{align*} for $M>n$. The sum over $u\in\bbzn$ is obviously finite and this completes the proof of Lemma \ref{keylemma3}.
 \end{proof}

     \begin{lemma}\label{keylemma32}
     Let $2\le l\le m$, $2\le p_1,\dots,p_l< \infty$, and $0<p< \infty$ with $1/p_1+\dots+1/p_l=1/p$.
 Let  $0<\epsilon<1$ and $\la,\mu\in\bbz$ with $2^{\la+\mu+1}\ge C_0\sqrt{n}$. Suppose that  $E_l^{\la+\mu}$ is a subset of $(\WW^{\la+\mu})^l$.
 Let $\{b_{\kkk}^{\ga}\}_{\kkk\in(\bbzn)^l}$ be a sequence of complex numbers and
 $$\DD_2:=\sup_{\ga\in\bbz}\big\Vert \{b_{\kkk}^{\ga}\}_{\kkk\in (\bbzn)^l}\big\Vert_{\ell^2}\q \text{and}\q \DD_{\infty}:=\sup_{\ga\in\bbz}\big\Vert \{b_{\kkk}^{\ga}\}_{\kkk\in (\bbzn)^{l}}\big\Vert_{\ell^{\infty}}.$$
 Then there exists $C_{\epsilon}>0$ such that
  \begin{align}\begin{split} \label{mainest32}
 \bigg\Vert  \sum_{\ga\in\bbz}& {\Big| \sum_{\kkk\in E_l^{\la+\mu}}  
 b_{\kkk}^{\ga}   \prod_{j=1}^{l}L_{G_j,k_j}^{\la,\ga}f_j  \Big|} \bigg\Vert_{L^{p}(\bbrn)} \\
& \le C_{\epsilon} 2^{\la l n /2} (\la+\mu+4)^{l/\min{\{1,p\}}}   \DD_2^{1-\epsilon} \DD_{\infty}^{\epsilon}|E^{\la+\mu}|^{\epsilon}\prod_{j=1}^{l}\Vert f_j\Vert_{L^{p_j}(\bbrn)}
 \end{split} 
 \end{align}
 for $f_1,\dots,f_l\in\mathscr{S}(\bbrn)$.
 \end{lemma}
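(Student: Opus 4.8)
The idea is to transcribe the proof of Lemma~\ref{keylemma3}, the only genuinely new point being that the product of the $l\ge 2$ factors is disentangled by a vector-valued H\"older inequality in the Littlewood--Paley index $\ga$ followed by the inclusion $\ell^2\hookrightarrow\ell^l$ (available precisely because $l\ge 2$), which reduces everything to $l$ separate applications of Lemma~\ref{variantlp}.

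First I would pass to the Fourier-series form of the building blocks. Since $\kkk\in E_l^{\la+\mu}\subseteq(\WW^{\la+\mu})^l$ forces $k_j\in\WW^{\la+\mu}$ for every $j$, \eqref{lgkest} gives $L^{\la,\ga}_{G_j,k_j}f_j=\sum_{\nu_j=-\la+c_0}^{\mu+3}L^{\la,\ga}_{G_j,k_j}\La_{\ga+\nu_j}f_j$, while unravelling \eqref{lgklg} exactly as in Lemma~\ref{keylemma3} yields
\begin{equation*}
L^{\la,\ga}_{G_j,k_j}\La_{\ga+\nu_j}f_j(x)=2^{\la n/2}\int_{\bbrn}\big(\La_{\ga+\nu_j}f_j\big)(x-2^{\la-\ga}y)\,e^{2\pi i\langle y,k_j\rangle}\,\Psi_{G_j}^{\vee}(y)\,dy .
\end{equation*}
Expanding the product over $j$ and collecting the shift vector $\vec\nu=(\nu_1,\dots,\nu_l)$, which ranges over a set of cardinality at most $(\la+\mu+4)^l$,
\begin{equation*}
\sum_{\kkk\in E_l^{\la+\mu}}b^{\ga}_{\kkk}\prod_{j=1}^{l}L^{\la,\ga}_{G_j,k_j}f_j(x)=2^{\la ln/2}\sum_{\vec\nu}\int_{(\bbrn)^l}B^{\ga}(\yyy)\prod_{j=1}^{l}\big(\La_{\ga+\nu_j}f_j\big)(x-2^{\la-\ga}y_j)\,\Psi_{G_j}^{\vee}(y_j)\,d\yyy ,
\end{equation*}
where $B^{\ga}(\yyy):=\sum_{\kkk\in E_l^{\la+\mu}}b^{\ga}_{\kkk}\,e^{2\pi i\langle\yyy,\kkk\rangle}$ is $1$-periodic in each of the $l$ blocks of $n$ variables. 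Pulling out the $2^{\la ln/2}$ and using the triangle inequality when $p\ge 1$, resp.\ subadditivity of $\Vert\cdot\Vert_{L^p}^p$ when $p<1$, over the $\vec\nu$-sum (which costs the factor $(\la+\mu+4)^{l/\min\{1,p\}}$), the lemma is reduced to showing, for each fixed $\vec\nu$,
\begin{equation*}
\Big\Vert\sum_{\ga\in\bbz}\Big|\int_{(\bbrn)^l}B^{\ga}(\yyy)\prod_{j=1}^{l}\big(\La_{\ga+\nu_j}f_j\big)(x-2^{\la-\ga}y_j)\Psi_{G_j}^{\vee}(y_j)\,d\yyy\Big|\Big\Vert_{L^p}\lesssim\DD_2^{1-\epsilon}\DD_{\infty}^{\epsilon}\big|E_l^{\la+\mu}\big|^{\epsilon}\prod_{j=1}^{l}\Vert f_j\Vert_{L^{p_j}} .
\end{equation*}

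To prove this I would split $(\bbrn)^l=\bigcup_{\uuu\in(\bbzn)^l}(\uuu+[0,1)^{ln})$ and, on each cube, apply H\"older with the conjugate pair $t=2/(1+\epsilon)$ and $t'=2/(1-\epsilon)$, placing $B^{\ga}$ in $L^{t'}$ and the remaining (by Fubini, factored) function in $L^t$. Periodicity, Parseval ($\Vert B^{\ga}\Vert_{L^2([0,1)^{ln})}=\Vert\{b^{\ga}_{\kkk}\}_{\kkk\in E_l^{\la+\mu}}\Vert_{\ell^2}\le\DD_2$), the crude bound $\Vert B^{\ga}\Vert_{L^{\infty}}\le\sum_{\kkk\in E_l^{\la+\mu}}|b^{\ga}_{\kkk}|\le|E_l^{\la+\mu}|\,\DD_{\infty}$, and the interpolation inequality $\Vert B^{\ga}\Vert_{L^{t'}}\le\Vert B^{\ga}\Vert_{L^2}^{1-\epsilon}\Vert B^{\ga}\Vert_{L^{\infty}}^{\epsilon}$ together bound the left-hand side by $\DD_2^{1-\epsilon}\DD_{\infty}^{\epsilon}|E_l^{\la+\mu}|^{\epsilon}$ times $\big\Vert\sum_{\ga}\prod_{j=1}^{l}H_j^{\ga}\big\Vert_{L^p}$, where $H_j^{\ga}(x):=\sum_{u\in\bbzn}\big\Vert\big(\La_{\ga+\nu_j}f_j\big)(x-2^{\la-\ga}\cdot)\,\Psi_{G_j}^{\vee}\big\Vert_{L^t(u+[0,1)^n)}$. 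The generalized H\"older inequality in $\ga$ with exponents $(l,\dots,l)$ gives $\sum_{\ga}\prod_{j}H_j^{\ga}\le\prod_{j}\big(\sum_{\ga}(H_j^{\ga})^l\big)^{1/l}$; H\"older in $x$ (using $1/p=\sum_j 1/p_j$) together with the inclusion $\ell^2\hookrightarrow\ell^l$ (valid since $l\ge 2$) then give $\big\Vert\sum_{\ga}\prod_{j}H_j^{\ga}\big\Vert_{L^p}\le\prod_{j=1}^{l}\big\Vert\big(\sum_{\ga\in\bbz}(H_j^{\ga})^2\big)^{1/2}\big\Vert_{L^{p_j}}$. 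Finally, for each $j$, Minkowski's inequality moves $\sum_u$ out of both the $\ell^2(\ga)$-norm and the $L^{p_j}$-norm; after re-indexing $\ga\mapsto\ga-\nu_j$, which converts the dilation into $2^{(\la+\nu_j)-\ga}$ with $\la+\nu_j\ge c_0\ge 1$, Lemma~\ref{variantlp} applies (its hypothesis $2\le p_j<\infty$ being exactly the one imposed here) and gives $\big\Vert\big(\sum_{\ga}(H_j^{\ga})^2\big)^{1/2}\big\Vert_{L^{p_j}}\lesssim_{M}\sum_{u\in\bbzn}(1+|u|)^{-M}\Vert f_j\Vert_{L^{p_j}}\lesssim\Vert f_j\Vert_{L^{p_j}}$ for $M>n$. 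Collecting constants and re-summing over $\vec\nu$ concludes the proof.

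The part that genuinely needs $l\ge 2$, and the only ingredient not already in Lemma~\ref{keylemma3}, is the pairing of ``generalized H\"older in $\ga$ with exponent $l$'' with ``$\ell^2\hookrightarrow\ell^l$'', which is what decouples the $l$ factors and produces, for each $j$, a single-function square function in $L^{p_j}$ controllable by Lemma~\ref{variantlp}; everything else is essentially a line-by-line copy of the proof of Lemma~\ref{keylemma3}, the $l$ mutually independent shift-sums being responsible for the power $(\la+\mu+4)^{l/\min\{1,p\}}$. I expect the main care to be needed precisely in arranging the H\"older exponents and the order of the summations so that one lands on the hypotheses of Lemma~\ref{variantlp} (namely $p_j\ge 2$ and a nonnegative effective dilation index $\la+\nu_j$).
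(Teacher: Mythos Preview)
Your proof is correct and follows essentially the same approach as the paper's: expand via \eqref{lgkest}, write the $\kkk$-sum as the exponential series $B^{\ga}$, split $(\bbrn)^l$ into unit cubes and apply H\"older with $t=2/(1+\epsilon)$, bound $\Vert B^{\ga}\Vert_{L^{t'}}$ by $\DD_2^{1-\epsilon}\DD_\infty^{\epsilon}|E_l^{\la+\mu}|^{\epsilon}$, then decouple the $l$ factors via generalized H\"older in $\ga$ plus $\ell^2\hookrightarrow\ell^l$ (what the paper labels ``Cauchy--Schwarz''), apply H\"older in $x$, and finish with Lemma~\ref{variantlp}. The only cosmetic difference is the order in which the $\vec\nu$- and $\uuu$-sums are extracted (you use the product structure $\sum_{\uuu}\prod_j=\prod_j\sum_{u_j}$ and Minkowski at the end, whereas the paper pulls $\sum_{\uuu}$ outside the $L^p$ norm via subadditivity earlier), but both routes yield the same bound.
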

\begin{proof}
Using (\ref{lgkest}), the left-hand side of (\ref{mainest32}) is less than
\begin{equation*}
\bigg( \sum_{\ii_1=-\la+c_0}^{\mu+3}\cdots\sum_{\ii_l=-\la+c_0}^{\mu+3}\bigg\Vert \sum_{\ga\in\bbz} \Big| \sum_{\kkk\in E_l^{\la+\mu}}b_{\kkk}^{\ga}\; L_{G_1,k_1}^{\la,\ga}\La_{\ga+\ii_1}f_1\cdots    L_{G_l,k_l}^{\la,\ga}\La_{\ga+\ii_l}f_{l} \Big| \bigg\Vert_{L^{p}(\bbrn)}^{\min{\{1,p\}}}\bigg)^{1/\min{\{1,p\}}}.
\end{equation*}
Choose $t:=\frac{2}{1+\epsilon}$ so that $1<t<2<t'=\frac{2}{1-\epsilon}$ as in the proof of Lemma \ref{keylemma3}.
 Then it follows from H\"older's inequality that
  \begin{align*}
& \Big| \sum_{\kkk\in E_l^{\la+\mu}}b_{\kkk}^{\ga}\; L_{G_1,k_1}^{\la,\ga}\La_{\ga+\ii_1}f_1(x)\cdots    L_{G_l,k_l}^{\la,\ga}\La_{\ga+\ii_l}f_{l}(x) \Big| \\
&\le 2^{\la l n/2}\int_{(\bbrn)^l}{\big| B^{\ga}_{E_l^{\la+\mu}}(\yyy)\big|  \prod_{j=1}^{l}\big|\La_{\ga+\ii_j}f(x-2^{\la-\ga}y_j) \Psi^{\vee}_{G_j}(y_j) \big|}d\yyy\\
&= 2^{\la l n/2}\sum_{\uuu\in (\bbzn)^l}\int_{\uuu+[0,1)^{nl}}{\big| B^{\ga}_{E_l^{\la+\mu}}(\yyy)\big|  \prod_{j=1}^{l}\big|\La_{\ga+\ii_j}f(x-2^{\la-\ga}y_j) \Psi^{\vee}_{G_j}(y_j) \big|}d\yyy\\
&\le 2^{\la l n/2}\sum_{\uuu\in (\bbzn)^l}\big\Vert B_{E_l^{\la+\mu}}^{\ga}\big\Vert_{L^{t'}(\uuu+[0,1)^{nl})} \prod_{j=1}^{l}\big\Vert \La_{\ga+\ii_j}f(x-2^{\la-\ga}\cdot) \Psi_{G_j}^{\vee}\big\Vert_{L^t(u_j+[0,1)^n)}
 \end{align*}
 where $\yyy:=(y_1,\dots,y_l)\in (\bbrn)^l$, $\uuu:=(u_1,\dots,u_l)\in (\bbzn)^l$, and
  \begin{equation*}
  B^{\ga}_{E_l^{\la+\mu}}(\yyy):=\sum_{\kkk\in E_l^{\la+\mu}}b^{\ga}_{\kkk} e^{2\pi i\langle \yyy,\kkk\rangle}.
  \end{equation*}

Similar to (\ref{bega}), we have
\begin{equation*}
\big\Vert B_{E_l^{\la+\mu}}^{\ga}\big\Vert_{L^{t'}(\uuu+[0,1)^{nl})}\lesssim \DD_2^{1-\epsilon} \DD_{\infty}^{\epsilon} |E_l^{\la+\mu}|^{\epsilon}.
\end{equation*}
Thus, the left-hand side of (\ref{mainest32}) is controlled by a constant times
\begin{align*}
&2^{\la l n/2}\DD_{2}^{1-\epsilon}\DD_{\infty}^{\epsilon}\big|E_l^{\la+\mu}\big|  \bigg( \sum_{\uuu\in (\bbzn)^l}\sum_{\ii_1=-\la+c_0}^{\mu+3}\cdots\sum_{\ii_l=-\la+c_0}^{\mu+3}   \\
&\qq \qq\Big\Vert \sum_{\ga\in\bbz}\prod_{j=1}^{l} \big\Vert \La_{\ga+\ii_j}f(x-2^{\la-\ga}\cdot) \Psi_{G_j}^{\vee}\big\Vert_{L^t(u_j+[0,1)^n)}          \Big\Vert_{L^p(x)}^{\min{\{1,p\}}} \bigg)^{1/\min{\{1,p\}}}
\end{align*}
and the $L^p$ norm is less than
\begin{align*}
&\bigg\Vert \prod_{j=1}^{l}\Big( \sum_{\ga\in\bbz} \big\Vert \La_{\ga}f(x-2^{\la+\ii_j-\ga}\cdot) \Psi_{G_j}^{\vee}\big\Vert_{L^t(u_j+[0,1)^n)}^2     \Big)^{1/2}        \bigg\Vert_{L^p(x)}\\
&\le \prod_{j=1}^{l}\Big\Vert\Big( \sum_{\ga\in\bbz} \big\Vert \La_{\ga}f(x-2^{\la+\ii_j-\ga}\cdot) \Psi_{G_j}^{\vee}\big\Vert_{L^t(u_j+[0,1)^n)}^2     \Big)^{1/2}        \Big\Vert_{L^{p_j}(x)}\\
&\lesssim \prod_{j=1}^{l}\frac{1}{(1+|u_j|)^M} \Vert f_j\Vert_{L^{p_j}(\bbrn)}
\end{align*}
for $M>n$, where the Cauchy-Schwarz inequality, H\"older's inequality, and Lemma \ref{variantlp} are applied in the inequalities.
This concludes that the left-hand side of (\ref{mainest32}) is dominated by
\begin{align*}
& 2^{\la l n/2} (\la+\mu+4)^{l/\min{\{1,p\}}}\DD_{2}^{1-\epsilon}\DD_{\infty}^{\epsilon}\big|E_l^{\la+\mu}\big| \Big(\prod_{j=1}^{l}\Vert f_j\Vert_{L^{p_j}(\bbrn)}\Big) \sum_{\uuu\in(\bbzn)^l} \prod_{j=1}^{l}\frac{1}{(1+|u_j|)^M}\\
&\lesssim  2^{\la l n/2} (\la+\mu+4)^{l/\min{\{1,p\}}}\DD_{2}^{1-\epsilon}\DD_{\infty}^{\epsilon}\big|E_l^{\la+\mu}\big| \prod_{j=1}^{l}\Vert f_j\Vert_{L^{p_j}(\bbrn)},
\end{align*}
which completes the proof.
\end{proof}

\begin{lemma}\label{convexhull}
Let $0<s<1$.
For $l\in J_m$ we define
$$\mathscr{V}^m_l(s):=\{(t_1,\dots,t_m): 0< t_l< 1 \q \text{and }~ 0<  t_j< s ~\text{ for }~ j\not= l\}$$
and let 
$\mathbb{H}^m(s)$ be the convex hull of $\mathscr{V}_1^m(s),\dots,\mathscr{V}_m^m(s)$.
Then we have 
\begin{equation}\label{e012}
\HH^m(s)=\mathbb{H}^m(s).
\end{equation}
\end{lemma}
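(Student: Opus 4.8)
The plan is to establish the two inclusions separately, after rewriting the half-space $\HH^m_J(s)$ in the more convenient form $\HH^m_J(s)=\{t\in(0,1)^m:\sum_{j\in J}t_j<(|J|-1)s+1\}$ (which is literally equivalent to $\sum_{j\in J}(s-t_j)>-(1-s)$), and keeping in mind the observation already noted in the excerpt that only the sets $J$ with $|J|\ge 2$ impose a genuine constraint.

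For the easy inclusion $\mathbb H^m(s)\subseteq\HH^m(s)$: since $\HH^m(s)=\bigcap_{J\subseteq J_m}\HH^m_J(s)$ is an intersection of the open cube $(0,1)^m$ with open half-spaces, it is convex, so it suffices to check that each generating box lies inside it, i.e. $\mathscr V^m_l(s)\subseteq\HH^m(s)$ for every $l$. Given $t\in\mathscr V^m_l(s)$ and $J$ with $|J|\ge 2$: if $l\notin J$ then $\sum_{j\in J}t_j<|J|s\le(|J|-1)s+1$ because $s<1$; if $l\in J$ then $\sum_{j\in J}t_j<1+(|J|-1)s$ since $t_l<1$ and each of the remaining $|J|-1$ summands is $<s$. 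In either case $t\in\HH^m_J(s)$.

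The reverse inclusion $\HH^m(s)\subseteq\mathbb H^m(s)$ is the substantive part, and I would prove it by producing an explicit convex representation. Fix $t\in\HH^m(s)$ and set $\mu_i:=(t_i-s)_+/(1-s)\ge 0$. The crucial step is the inequality $\sum_{i=1}^m\mu_i<1$: writing $S:=\{i:t_i>s\}$, the cases $|S|=0$ and $|S|=1$ are immediate (the latter from $t_i<1$), while for $|S|\ge 2$ this is exactly the defining inequality of $\HH^m_S(s)$, namely $\sum_{i\in S}(t_i-s)<1-s$. Granting this, put $\lambda_i:=\mu_i+\big(1-\sum_k\mu_k\big)/m$, so that $\lambda_i>0$, $\sum_i\lambda_i=1$ and $\lambda_i>\mu_i$; the last inequality forces $0<t_i<s+(1-s)\lambda_i$ for every $i$. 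Finally define points $v^{(l)}$ for $l\in J_m$ by $v^{(l)}_l:=t_l/(s+(1-s)\lambda_l)$ and $v^{(l)}_i:=s\,t_i/(s+(1-s)\lambda_i)$ for $i\ne l$. The bounds just obtained give $v^{(l)}_l\in(0,1)$ and $v^{(l)}_i\in(0,s)$ for $i\ne l$, hence $v^{(l)}\in\mathscr V^m_l(s)$, and a one-line computation shows $\sum_{l=1}^m\lambda_l v^{(l)}_i=t_i\,\big(\lambda_i+s(1-\lambda_i)\big)/\big(s+(1-s)\lambda_i\big)=t_i$ for each coordinate $i$. Thus $t=\sum_l\lambda_l v^{(l)}$ is a convex combination of points of $\bigcup_l\mathscr V^m_l(s)$, i.e. $t\in\mathbb H^m(s)$.

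The only real obstacle is the inequality $\sum_i\mu_i<1$ used in the second inclusion: this is the single place where the whole family of constraints defining $\HH^m(s)$ is needed (via the choice $J=\{i:t_i>s\}$), and it is also where one must recall that the cases $|J|\le 1$ carry no constraint, so those have to be disposed of directly. Everything else — convexity of $\HH^m(s)$, membership $v^{(l)}\in\mathscr V^m_l(s)$, and the barycenter identity — is routine algebra.
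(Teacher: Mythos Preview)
Your proof is correct and cleaner than the paper's, though the two share the same key observation.

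The easy inclusion $\mathbb H^m(s)\subseteq\HH^m(s)$ is handled identically. For the reverse inclusion the paper proceeds by induction on $m$: first it rewrites $\HH^m(s)$ as $\{t\in(0,1)^m:\sum_{j}\min\{s-t_j,0\}>s-1\}$ (this is precisely your inequality $\sum_i\mu_i<1$), and then, given $t\in\HH^{m+1}(s)$, distinguishes whether some coordinate $t_j<s$ (in which case it drops that coordinate and invokes the inductive hypothesis on $\HH^m(s)$) or all $t_j\ge s$ (in which case it exhibits $t$ as the barycenter of the $m+1$ permutations of the single point $(t_1+\cdots+t_{m+1}-ms+m\epsilon,\,s-\epsilon,\dots,s-\epsilon)$ for small $\epsilon>0$).

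Your argument bypasses the induction entirely: once $\sum_i\mu_i<1$ is known you write down explicit weights $\lambda_i$ and vertices $v^{(l)}\in\mathscr V^m_l(s)$ in closed form, valid for all $m$ simultaneously. The paper's route is perhaps more elementary step by step, but yours yields a single constructive formula and makes the role of the constraint family $\{\HH^m_J(s)\}_J$ completely transparent: only the one set $J=S=\{i:t_i>s\}$ is ever invoked, and the cases $|S|\le1$ are trivial for the reason you give.
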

\begin{proof}
Clearly, $\HH^m(s)$ is open and convex as each $\HH_J(s)$, $J\subseteq J_m$, is an open convex set.
It is easy to see that $\mathscr{V}^m_l(s)\subset \HH^m(s)$ for all $l\in J_m$ and thus 
we have
$$\HH^m(s)  \supseteq   \mathbb{H}^m(s)\q \text{ for all }~m\ge 2.$$

Now let's us prove the opposite direction by using induction on the degree $m$ of multilinearity.
We first note that
\begin{equation}\label{e011}
\HH^m(s)=\Big\{ (t_1,\dots,t_m)\in (0,1)^m: \sum_{j=1}^{m}\min\{s-t_j,0\}> s-1\Big\}.
\end{equation}
To verify \eqref{e011}, we denote by $H^m(s)$ the right-hand side of \eqref{e011}.
Suppose that $t=(t_1,\dots,t_m)\in \HH^m(s)$ and  let $J^{(t)}:=\{j\in J_m:\ t_j>s\}$. Then by the definition of $\HH^m(s)$ in \eqref{defhh}, we have
$$
s-1<\sum_{j\in J^{(t)}}(s-t_j)=\sum_{j=1}^{m}\min\{s-t_j,0\},
$$
which implies $\HH^m(s)\subset H^m(s)$.
 Moreover, if $t=(t_1,\dots, t_m)\in H^m(s)$, then
$$
s-1<\sum_{j\in J^{(t)}} (s-t_j)\le \sum_{j\in J\cap J^{(t)}}(s-t_j)\le  \sum_{j\in J\cap J^{(t)}}(s-t_j)+ \sum_{j\in J\setminus J^{(t)}}(s-t_j)= \sum_{j\in J}(s-t_j)
$$
 for any $J\subset J_m.$
This gives that $H^m(s)\subset \HH^m(s) \big( =\cap_J \HH_J^m(s)\big)$.

We now return to the proof of $\HH^m(s)  \subseteq   \mathbb{H}^m(s)$ for $m\ge 2$.
The case $m=2$ is obvious from a simple geometric observation, but we provide an explicit approach.
If $(t_1,t_2)\in \HH^2(s)$, then we have $0< t_1,t_2< 1$ and $0< t_1+t_2< 1+s$.
When either $t_1$ or $t_2$ is less than $s$, then $(t_1,t_2)$ belongs to one of $\mathscr{V}_1^m(s)$ or $\mathscr{V}_2^m(s)$ by definition.
When $s\le t_1,t_2<1$, we choose $0<\epsilon<1$ such that
$$0<\epsilon<1+s-(t_1+t_2).$$
Then the point $(t_1, t_2)$ lies on the segment joining $(t_1+t_2-s+\epsilon,s-\epsilon)\in\mathscr{V}_1^2(s)$ and $(s-\epsilon,t_1+t_2-s+\epsilon)\in\mathscr{V}_2^2(s)$ as $0<s-\epsilon<s$ and $0<t_1+t_2-s+\epsilon<1$.
This shows
\begin{equation*}
\HH^2(s)\subseteq \mathbb{H}^2(s).
\end{equation*} 
Now suppose that $\HH^m(s)\subseteq \mathbb{H}^m(s)$ is true for some $m\ge 2$, and let $(t_1,\dots,t_{m+1})\in \HH^{m+1}(s)$.
If $0<t_{m+1}<s$, then
$$\sum_{j=1}^{m}\min\{s-t_j,0\}=\sum_{j=1}^{m+1}\min\{s-t_j,0\}>s-1$$
so that $(t_1,\dots,t_m)\in \HH^m(s)\subseteq \mathbb{H}^m(s)$ by applying the induction hypothesis.
Therefore, the point $(t_1,\dots,t_m,t_{m+1})$ belongs to the convex hull of the following $m$ sets:
$$\mathscr{V}^m_l(s)\times (0,s)= \mathscr{V}^{m+1}_l(s),\qq l\in J_m.$$
This implies $(t_1,\dots,t_{m+1})\in \mathbb{H}^{m+1}(s)$.
Similarly, the same conclusion also holds if $0<t_j<s$ for some $j\in J_m$.
For the remaining cases, we assume that $s\le t_1,\dots,t_{m+1}<1$.
Since $(t_1,\dots,t_{m+1})\in \HH^{m+1}(s)$, we see that
$t_1+\dots+t_{m+1}<1+ms$, and thus there exists $0<\epsilon<1$ so that
\begin{equation}\label{epsilonrange}
0<m\epsilon<1+ms-(t_1+\dots+t_{m+1}).
\end{equation}
Then the point $(t_1,\dots,t_{m+1})$ is clearly located on the convex hull of the points
\begin{align*}
&(t_1+\dots+t_{m+1}-ms+m\epsilon,s-\epsilon,\dots,s-\epsilon)\in\mathscr{V}_1^{m+1}(s)\\
&\qq\qq\qq\qq\qq\vdots\\
&(s-\epsilon,\dots,s-\epsilon,t_1+\dots+t_{m+1}-ms+m\epsilon)\in\mathscr{V}_{m+1}^{m+1}(s)
\end{align*}
as $0<t_1+\dots+t_{m+1}-ms+m\epsilon<1$, because of \eqref{epsilonrange}.
This proves that $(t_1,\dots,t_{m+1})\in \mathbb{H}^{m+1}(s)$ and completes that proof of $\HH^{m+1}(s)\subseteq \mathbb{H}^{m+1}(s)$.

By induction, we finally have 
$$\HH^m(s)\subseteq \mathbb{H}^{m}(s) \q\text{ for general }~m\ge 2.$$
\end{proof}

\section{Proof of Proposition \ref{mainproposition} }\label{pfmainpropo}
It suffices to prove \eqref{mainpropoest} for $\mu$ such that $2^{\mu-10}>C_0\sqrt{mn}$ in view of \eqref{e09022}.
The proof will be based on mathematical induction starting with the estimate in the following proposition.

\begin{proposition}\label{lppropo}
Let $2\le p_1,\dots,p_m\le \infty$ and $2/m\le p<\infty$ with $1/p_1+\cdots+1/p_m=1/p$.
Suppose that $0<\epsilon<1$ and $2^{\mu-10}>C_0\sqrt{mn}$.
Then there exists $C_{\epsilon}>0$ such that
 \begin{equation}\label{prop1est}
 \big\Vert {\LL_{\mu}(f_1,\dots,f_m)}\big\Vert_{L^{p}(\bbrn)}\le C_{\epsilon} 2^{\epsilon \mu}\Vert \Om \Vert_{L^2(\mathbb{S}^{mn-1})}\prod_{j=1}^{m}\Vert f_j\Vert_{L^{p_j}(\bbrn)}.
 \end{equation} 
\end{proposition}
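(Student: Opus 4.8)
The plan is to prove the estimate \eqref{prop1est} for $\Omega \in L^2(\mathbb{S}^{mn-1})$ by means of a wavelet decomposition of $\Omega$ combined with the column/projection bookkeeping from Section \ref{preliminary} and the multilinear square-function bounds of Lemmas \ref{keylemma3} and \ref{keylemma32}. First I would reduce to the case $p < \infty$ with all $p_j \in [2,\infty)$ by interpolation and duality: the extreme case $p_1 = \cdots = p_m = \infty$ does not occur since $2/m \le p < \infty$, and once the ``upper $L^2$'' corner is handled, a modified Calder\'on--Zygmund argument (as in \cite{He_Park2021}) propagates the bound to the stated range; but for the core estimate I would just assume $2 \le p_1,\dots,p_m < \infty$ directly. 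Writing $K_\mu = \sum_{\gamma \in \bbz} K_\mu^\gamma$ with $K_\mu^\gamma(\yyy) = 2^{\gamma mn} K_\mu^0(2^\gamma \yyy)$ as in \eqref{kernelcharacter}, I would expand $\Omega \in L^2(\mathbb{S}^{mn-1})$ — or rather a suitable compactly supported extension $\widetilde{K^0}$ of $K^0$ to $\bbr^{mn}$ — in the Daubechies wavelet basis \eqref{daubechewavelet}, obtaining $K^0(\yyy) = \sum_{\lambda \in \bbn_0} \sum_{\GGG \in \II^\lambda} \sum_{\kkk} b_{\GGG,\kkk}^\lambda \, 2^{\lambda mn/2} \Psi_\GGG(2^\lambda \yyy - \kkk)$, with the $\ell^2$ control \eqref{lqestimate} giving $\|\{b_{\GGG,\kkk}^\lambda\}_\kkk\|_{\ell^2} \lesssim \|\Omega\|_{L^2(\mathbb{S}^{mn-1})}$ uniformly (the factor $2^{-\lambda mn(1/2 - 1/q)}$ is $1$ when $q = 2$). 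The key geometric point is that the support condition \eqref{supppsi} forces, for the relevant scales, $k_j$ to lie in the annulus $\WW^{\lambda+\mu}$ of cardinality $\sim 2^{(\lambda+\mu)n}$, up to harmless tails.

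The main body of the argument is then to estimate, for each fixed $\lambda$ and $\GGG$, the piece of $\LL_\mu$ built from the wavelet coefficients at that level. After Fourier-localizing each input $f_j$ via the Littlewood--Paley operators $\Lambda_\gamma$ (using that $\wh{\Phi^{(m)}}$ restricts each $\xi_j$ essentially to scale $2^{-\gamma}$), this piece becomes a multilinear expression of exactly the form $\sum_{\gamma} \big| \sum_{\kkk \in E^{\lambda+\mu}} b_{\GGG,\kkk}^\gamma \prod_j L_{G_j,k_j}^{\lambda,\gamma} f_j \big|$ handled by Lemma \ref{keylemma32} (with $l = m$), where the ``$\gamma$-independence'' of $b_{\GGG,\kkk}$ is arranged by the rescaling and $E^{\lambda+\mu} \subseteq (\WW^{\lambda+\mu})^m$. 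Applying Lemma \ref{keylemma32} gives a bound
$$
\lesssim_\epsilon 2^{\lambda mn/2} (\lambda + \mu)^{C} \, \DD_2^{1-\epsilon} \DD_\infty^{\epsilon} \, |E^{\lambda+\mu}|^{\epsilon} \prod_j \|f_j\|_{L^{p_j}},
$$
where $\DD_2 \lesssim \|\Omega\|_{L^2}$ and $|E^{\lambda+\mu}|^\epsilon \lesssim 2^{(\lambda+\mu)mn\epsilon}$. The dangerous factor is $2^{\lambda mn/2}$, which must be absorbed by decay coming from the wavelet expansion. To extract it, I would exploit the vanishing moments of $\psi_M$: for $\GGG \in \II^\lambda$ at least one coordinate is of type $M$, so a Taylor expansion of the smooth multiplier against $\Psi_{\GGG,\kkk}^\lambda$ produces a gain of $2^{-\lambda}$ (in fact $2^{-\lambda L}$ for large $L$) per $M$-coordinate, combined with the regularity of $K^0$ away from the origin and the annular localization $|\yyy| \sim 1$. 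Summed over $\GGG \in \II^\lambda$ and over $\lambda$, together with the $\ell^1$–$\ell^2$ control on the $b_{\GGG,\kkk}^\lambda$, this defeats $2^{\lambda mn/2}$ and leaves only a net factor $2^{\epsilon' \mu}$ (arbitrarily small $\epsilon'$) from the cardinality term; choosing $L$ large and $\epsilon$ small yields \eqref{prop1est}.

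The step I expect to be the main obstacle is precisely the interplay between the wavelet-smoothness gain and the factor $2^{\lambda mn/2}$: one must show that the cancellation encoded in the vanishing moments of $\psi_M$ — transferred through the smoothness of the truncated kernel $\wh{\Phi^{(m)}}(\cdot) K(\cdot)$ on the annulus — genuinely produces enough decay in $\lambda$ to sum, \emph{uniformly in $\mu$} apart from the controlled $2^{\epsilon\mu}$ loss, and for \emph{every} choice of $\GGG \in \II^\lambda$ simultaneously (there are $\sim 2^{mn}$ of them at each level but their number is independent of $\lambda$, which is what makes this work). A secondary technical point is the passage from $L^p$ with $p$ possibly $< 1$ (when $m \ge 3$ and $p$ near $2/m$) to honest norm estimates: here one uses the $\ell^{\min\{1,p\}}$ triangle inequality as already built into Lemma \ref{keylemma32}, and the Fefferman--Stein / Littlewood--Paley inequality \eqref{hlmax} in the quasi-Banach range, so no genuinely new difficulty arises there beyond careful bookkeeping of the $(\lambda+\mu)^{C}$ polynomial factors, which are harmless against the exponential gains.
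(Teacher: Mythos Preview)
Your overall strategy is in the right spirit, but there is a genuine confusion in the setup that would make the argument collapse as written.

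\textbf{Wrong object in the wavelet expansion.} You propose to expand $K^0$ (or a compactly supported extension $\widetilde{K^0}$) on the \emph{physical} side. The paper instead expands $\widehat{K_\mu^0}$ on the \emph{Fourier} side, and this is not a cosmetic choice. First, the operators $L_{G,k}^{\la,\ga}$ are Fourier multipliers by $\Psi_{G,k}^{\la}(\xi/2^\ga)$; you only land on products of these operators if the wavelets sit on the frequency side. A physical-side expansion would produce convolutions with $\Psi_{G_j}$, not the $L_{G_j,k_j}^{\la,\ga}$ of Lemmas \ref{keylemma3} and \ref{keylemma32}. Second, the annulus constraint $|\kkk|\sim 2^{\la+\mu}$ (hence the appearance of $\WW^{\la+\mu}$) comes from $\supp\widehat{K_\mu^0}\subset\{|\xxxi|\sim 2^{\mu}\}$; a physical-side expansion of $K^0$, whose support is $|\yyy|\sim 1$, gives no $\mu$-dependent localization of $\kkk$ at all. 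Third, and most importantly, your proposed source of $\la$-decay (``smoothness of the truncated kernel $\widehat{\Phi^{(m)}}(\cdot)K(\cdot)$'') is not available: that function is $K^0$, and $\Omega$ is merely in $L^2$, so $K^0$ has no smoothness for the vanishing moments to exploit. What \emph{is} smooth is $\widehat{K_\mu^0}$, because $K_\mu^0$ is essentially compactly supported; the vanishing moments of $\Psi_{\GGG}$ against this smoothness yield the crucial $\ell^\infty$ bound $\|\{b_{\GGG,\kkk}^{\la,\mu}\}\|_{\ell^\infty}\lesssim 2^{-\delta\mu}2^{-\la(L+1+mn)}\|\Omega\|_{L^2}$ (quoted in the paper from \cite[Lemma~7]{Gr_He_Ho2018}). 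This explicit decay, interpolated against the $\ell^2$ bound inside Lemmas \ref{keylemma3}--\ref{keylemma32} via the $\DD_2^{1-\epsilon}\DD_\infty^{\epsilon}$ factor, is precisely what kills $2^{\la mn/2}$ and leaves only $2^{\epsilon\mu}$.

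\textbf{The index set is not a full product.} You assert that each $k_j$ lies in $\WW^{\la+\mu}$ ``up to harmless tails''. This is false: the support of $\widehat{K_\mu^0}$ only forces $2^{\la+\mu-2}\le|\kkk|\le 2^{\la+\mu+2}$, so some individual $k_j$ can be small ($|k_j|<2C_0\sqrt{n}$). The paper therefore splits $\UU^{\la+\mu}$ into $\UU_1^{\la+\mu},\dots,\UU_m^{\la+\mu}$ according to how many $k_j$ are large. For $l\ge 2$ one applies Lemma \ref{keylemma32} as you suggest; but the case $l=1$ (only $k_1$ large) requires a separate argument: the Fourier support of $\TT_{\GGG,1}^{\la,\ga,\mu}$ is contained in an annulus at scale $2^{\ga+\mu}$, so one can invoke Littlewood--Paley theory and \eqref{marshallest1} to pass to a square function, and then Lemma \ref{keylemma3} (not \ref{keylemma32}) closes that case. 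Without this splitting your application of Lemma \ref{keylemma32} with $l=m$ is not justified, since the hypothesis $E_l^{\la+\mu}\subset(\WW^{\la+\mu})^l$ fails.
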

The proof of the above proposition will be presented below.\\

In order to describe the induction argument, 
for $0<s<1$ and $l\in J_m$,
we define
$$\mathscr{R}^m_l(s):=\{(t_1,\dots,t_m): t_l=1 \q \text{and }~ 0\le t_j<s ~\text{ for }~ j\not= l\}.$$
and let 
$$\mathcal{C}^m(s):=\{(t_1,\dots,t_m):0<t_j<s,\q j\in J_m\}$$ 
be the open cube of side length $s$ with the lower left corner $(0,\dots,0)$.
\begin{customclaim}{$X(s)$}

 Let $1/m<p<\infty$ and $(1/p_1,\dots,1/p_m)\in\mathcal{C}^m(s) $  with $1/p_1+\dots+1/p_m=1/p.$
 Suppose that $0<\epsilon<1$ and $2^{\mu-10}>C_0\sqrt{mn}$. Then there exists $C_{\epsilon}>0$ such that
 $$\big\Vert \LL_{\mu}(f_1,\dots,f_m)   \big\Vert_{L^{p}(\bbrn)}\lesssim 2^{\epsilon \mu}\Vert \Omega\Vert_{L^{\frac{1}{1-s}}(\mathbb{S}^{mn-1})} \prod_{j=1}^{m}\Vert f_j\Vert_{L^{p_j}(\bbrn)}.$$
\end{customclaim}
\begin{customclaim}{$Y(s)$}
Let $1/m<p<1$ and $(1/p_1,\dots,1/p_m)\in \bigcup_{l=1}^{m}\mathscr{R}^m_l(s)$ with $1/p_1+\dots+1/p_m=1/p$.
Suppose that $0<\epsilon<1$ and $2^{\mu-10}>C_0\sqrt{mn}$. Then there exists $C_{\epsilon}>0$ such that
\begin{equation*}
\big\Vert \LL_{\mu}(f_1,\dots,f_m)\big\Vert_{L^{p,\infty}(\bbrn)}\le C_{\epsilon} \Vert \Omega\Vert_{L^{\frac{1}{1-s}}(\mathbb{S}^{mn-1})} 2^{\epsilon \mu}   \prod_{j=1}^{m}\Vert f_j\Vert_{L^{p_j}(\bbrn)}
\end{equation*}
\end{customclaim}
\begin{customclaim}{$Z(s)$}
Let $1/m<p<\infty$ and $(1/p_1,\dots,1/p_m)\in\bigcup_{l=1}^m\mathscr{V}_l^m(s)$  with $1/p_1+\dots+1/p_m=1/p$.
Suppose that $0<\epsilon<1$ and $2^{\mu-10}>C_0\sqrt{mn}$. Then there exists $C_{\epsilon}>0$ such that
\begin{equation*}
\big\Vert \LL_{\mu}(f_1,\dots,f_m)\big\Vert_{L^{p}(\bbrn)}\le C_{\epsilon} \Vert \Omega\Vert_{L^{\frac{1}{1-s}}(\mathbb{S}^{mn-1})} 2^{\epsilon \mu}   \prod_{j=1}^{m}\Vert f_j\Vert_{L^{p_j}(\bbrn)}
\end{equation*}
\end{customclaim}
\begin{customclaim}{$\Sigma(s)$}
Let $1/m<p<\infty$ and $(1/p_1,\dots,1/p_m)\in\mathcal{H}^m(s)$  with $1/p_1+\dots+1/p_m=1/p$.
Suppose that $0<\epsilon<1$ and $2^{\mu-10}>C_0\sqrt{mn}$. Then there exists $C_{\epsilon}>0$ such that
\begin{equation*}
\big\Vert \LL_{\mu}(f_1,\dots,f_m)\big\Vert_{L^{p}(\bbrn)}\le C_{\epsilon} \Vert \Omega\Vert_{L^{\frac{1}{1-s}}(\mathbb{S}^{mn-1})} 2^{\epsilon \mu}   \prod_{j=1}^{m}\Vert f_j\Vert_{L^{p_j}(\bbrn)}
\end{equation*}
\end{customclaim}

 \begin{figure}[h]
\begin{tikzpicture}

\path[fill=red!5] (0-\gap,0,1.95)--(1.4625-\gap,0,1.95)--(1.4625-\gap,1.4625,1.95)--(0-\gap,1.4625,1.95)--(0-\gap,0,1.95);
\path[fill=red!5] (1.4625-\gap,0,1.95)--(1.4625-\gap,0,0)--(1.4625-\gap,1.4625,0)--(1.4625-\gap,1.4625,1.95)--(1.4625-\gap,0,1.95);
\path[fill=red!5] (1.4625-\gap,1.4625,0)--(0-\gap,1.4625,0)--(0-\gap,1.4625,1.95)--(1.4625-\gap,1.4625,1.95)--(1.4625-\gap,1.4625,0);

\draw[dash pattern= { on 2pt off 1pt}](0-\gap,1.4625,0)--(1.4625-\gap,1.4625,0)--(1.4625-\gap,0,0)--(1.4625-\gap,0,1.95)--(0-\gap,0,1.95)--(0-\gap,1.4625,1.95)--(0-\gap,1.4625,0);
\draw[dash pattern= { on 2pt off 1pt}](0-\gap,1.4625,1.95)--(1.4625-\gap,1.4625,1.95)--(1.4625-\gap,0,1.95);
\draw[dash pattern= { on 2pt off 1pt}](1.4625-\gap,1.4625,1.95)--(1.4625-\gap,1.4625,0);

\draw[dotted] (1.4625-\gap,1.4625,3)--(1.4625-\gap,2.25,1.95)--(2.25-\gap,1.4625,1.95)--(1.4625-\gap,1.4625,3);
\draw[dotted] (1.4625-\gap,1.4625,3)--(1.4625-\gap,0,3)--(2.25-\gap,0,1.95)--(2.25-\gap,1.4625,1.95);
\draw[dotted] (1.4625-\gap,2.25,1.95)--(1.4625-\gap,2.25,0)--(2.25-\gap,1.4625,0)--(2.25-\gap,1.4625,1.95);
\draw[dotted](1.4625-\gap,1.4625,3)--(0-\gap,1.4625,3)--(0-\gap,2.25,1.95)--(1.4625-\gap,2.25,1.95);
\draw[dotted] (2.25-\gap,0,1.95)--(2.25-\gap,0,0)--(2.25-\gap,1.4625,0);
\draw[dotted] (0-\gap,2.25,1.95)--(0-\gap,2.25,0)--(1.4625-\gap,2.25,0);
\draw[dotted] (1.4625-\gap,0,3)--(0-\gap,0,3)--(0-\gap,1.4625,3);

\draw[dash pattern= { on 1pt off 1pt}] (0-\gap,0,0)--(1.4625-\gap,0,0);
\draw[dash pattern= { on 1pt off 1pt}] (0-\gap,0,0)--(0-\gap,1.4625,0);
\draw[dash pattern= { on 1pt off 1pt}] (0-\gap,0,0)--(0-\gap,0,1.95);
\draw [->] (0-\gap,0,1.95)--(0-\gap,0,4);
\draw [->] (0-\gap,1.4625,0)--(0-\gap,3,0);
\draw [->] (1.4625-\gap,0,0)--(3-\gap,0,0);

\node [below] at (3-\gap,0,0) {\tiny$t_1$};
\node [left] at (0-\gap,3,0) {\tiny$t_2$};
\node [below] at (0-\gap,0,4) {\tiny$t_3$};

\node [above right] at (1.4-\gap,1.4,0) {\tiny$(s,s,0)$};
\node [right] at (1.3-\gap,-0.2,1.7) {\tiny$(s,0,s)$};
\node [ left] at (0-\gap,0,1.7) {\tiny$(0,0,s)$};
\node [left] at (0-\gap,1.4,1.7) {\tiny$(0,s,s)$};
\node [right] at (1.3-\gap,1.3,1.7) {\tiny$(s,s,s)$};
\node [below right] at (1.3-\gap,0,-0.2) {\tiny$(s,0,0)$};
\node [above left] at (0-\gap,1.4,0) {\tiny$(0,s,0)$};

\node  at (0.7-\gap,0.7,1) {$\mathcal{C}^3(s)$};

\node [below] at (1.2-\gap,-1.5,1.3) {in $\mathrm{ \bf Claim}~ X(s)$};

\path[fill=red!5] (0,0,3)--(1.4625,0,3)--(1.4625,1.4625,3)--(0,1.4625,3)--(0,0,3);
\path[fill=red!5] (2.25,0,1.95)--(2.25,0,0)--(2.25,1.4625,0)--(2.25,1.4625,1.95)--(2.25,0,1.95);
\path[fill=red!5] (1.4625,2.25,0)--(0,2.25,0)--(0,2.25,1.95)--(1.4625,2.25,1.95)--(1.4625,2.25,0);

\draw[dotted] (1.4625,1.4625,3)--(1.4625,2.25,1.95)--(2.25,1.4625,1.95)--(1.4625,1.4625,3);
\draw[dotted] (1.4625,1.4625,3)--(1.4625,0,3)--(2.25,0,1.95);
\draw[dotted] (1.4625,2.25,0)--(2.25,1.4625,0)--(2.25,1.4625,1.95);
\draw[dotted] (0,1.4625,3)--(0,2.25,1.95)--(1.4625,2.25,1.95);

\draw[dash pattern= { on 2pt off 1pt}] (1.4625,0,3)--(1.4625,1.4625,3)--(0,1.4625,3);
\draw[dash pattern= { on 2pt off 1pt}]  (0,2.25,1.95)--(1.4625,2.25,1.95)--(1.4625,2.25,0);
\draw[dash pattern= { on 2pt off 1pt}] (2.25,0,1.95)--(2.25,1.4625,1.95)--(2.25,1.4625,0);

\draw[dash pattern= { on 2pt off 1pt}] (2.25,0,1.95)--(2.25,0,0)--(2.25,1.4625,0);
\draw[dash pattern= { on 2pt off 1pt}] (0,2.25,1.95)--(0,2.25,0)--(1.4625,2.25,0);
\draw[dash pattern= { on 2pt off 1pt}] (1.4625,0,3)--(0,0,3)--(0,1.4625,3);

\draw[dash pattern= { on 1pt off 1pt}] (0,0,0)--(0.3,0,0);
\draw[dash pattern= { on 1pt off 1pt}] (1.5,0,0)--(2.25,0,0);
\draw[dash pattern= { on 1pt off 1pt}] (0,0,0)--(0,0.3,0);
\draw[dash pattern= { on 1pt off 1pt}] (0,1.5,0)--(0,2.25,0);
\draw[dash pattern= { on 1pt off 1pt}] (0,0,0)--(0,0,3);
\draw [->] (0,0,3)--(0,0,4);
\draw [->] (0,2.25,0)--(0,3,0);
\draw [->] (2.25,0,0)--(3,0,0);

\draw [-] (0.3,0,0)--(1.5,0,0);
\draw [-] (0,0.3,0)--(0,1.5,0);

\node [below] at (3,0,0) {\tiny$t_1$};
\node [left] at (0,3,0) {\tiny$t_2$};
\node [below] at (0,0,4) {\tiny$t_3$};

\node  at (2.25,0.7,0.9) {$\mathscr{R}_1^3(s)$};
\node  at (0.8,2.25,0.9) {$\mathscr{R}^3_2(s)$};
\node  at (0.8,0.7,3) {$\mathscr{R}^3_3(s)$};

\node [left] at (0,2.25,-0.2) {\tiny$(0,1,0)$};
\node [right] at (1.38,2.25,-0.2) {\tiny$(s,1,0)$};
\node [left] at (0,2.25,1.8) {\tiny$(0,1,s)$};
\node [right] at (1.45,2.25,1.9) {\tiny$(s,1,s)$};

\node [right] at (2.1,1.4,-0.2) {\tiny$(1,s,0)$};
\node [right] at (2.1,0.1,-0.2) {\tiny$(1,0,0)$};
\node [right] at (2.25,0,1.95) {\tiny$(1,0,s)$};

\node [right] at (1.1,-0.2,3) {\tiny$(s,0,1)$};
\node [left] at (0,0,2.8) {\tiny$(0,0,1)$};
\node [left] at (0,1.4625,2.8) {\tiny$(0,s,1)$};

\node [below] at (1.2,-1.5,1.3) {in $\mathrm{ \bf Claim}~ Y(s)$};


\path[fill=red!5] (0-\gap,0-\gaptt,1.95)--(0-\gap,2.25-\gaptt,1.95)--(0-\gap,2.25-\gaptt,0)--(1.4625-\gap,2.25-\gaptt,0)--(1.4625-\gap,0-\gaptt,-0.2)--(1.4625-\gap,0-\gaptt,1.95)--(0-\gap,0-\gaptt,1.95);
\path[fill=red!5] (0-\gap,1.4-\gaptt,-0.2)--(2.2-\gap,1.4-\gaptt,-0.2)--(2.2-\gap,0-\gaptt,-0.2)--(2.25-\gap,0-\gaptt,1.95)--(0-\gap,0-\gaptt,1.95);
\path[fill=red!5] (0-\gap,1.4625-\gaptt,-0.2)--(0-\gap,1.4625-\gaptt,3)--(0-\gap,0-\gaptt,3)--(1.4625-\gap,0-\gaptt,3)--(1.4625-\gap,0-\gaptt,-0.2);

\draw[dotted](1.4625-\gap,1.4625-\gaptt,3)--(1.4625-\gap,2.25-\gaptt,1.95)--(2.25-\gap,1.4625-\gaptt,1.95)--(1.4625-\gap,1.4625-\gaptt,3);
\draw[dotted] (1.4625-\gap,1.4625-\gaptt,3)--(1.4625-\gap,0-\gaptt,3)--(2.25-\gap,0-\gaptt,1.95);
\draw[dotted] (1.4625-\gap,2.25-\gaptt,0)--(2.25-\gap,1.4625-\gaptt,0);
\draw[dotted] (1.4625-\gap,1.4625-\gaptt,3)--(0-\gap,1.4625-\gaptt,3)--(0-\gap,2.25-\gaptt,1.95);
\draw[dash pattern= { on 2pt off 1pt}] (2.25-\gap,0-\gaptt,1.95)--(2.25-\gap,0-\gaptt,0)--(2.25-\gap,1.4625-\gaptt,0)--(2.25-\gap,1.4625-\gaptt,1.95)--(2.25-\gap,0-\gaptt,1.95);
\draw[dash pattern= { on 2pt off 1pt}] (0-\gap,2.25-\gaptt,1.95)--(0-\gap,2.25-\gaptt,0)--(1.4625-\gap,2.25-\gaptt,0)--(1.4625-\gap,2.25-\gaptt,1.95)--(0-\gap,2.25-\gaptt,1.95);
\draw[dash pattern= { on 2pt off 1pt}] (1.4625-\gap,0-\gaptt,3)--(0-\gap,0-\gaptt,3)--(0-\gap,1.4625-\gaptt,3)--(1.4625-\gap,1.4625-\gaptt,3)--(1.4625-\gap,0-\gaptt,3);

\draw[dash pattern= { on 1pt off 1pt}] (0-\gap,0-\gaptt,0)--(2.25-\gap,0-\gaptt,0);
\draw[dash pattern= { on 1pt off 1pt}] (0-\gap,0-\gaptt,0)--(0-\gap,2.25-\gaptt,0);
\draw[dash pattern= { on 1pt off 1pt}] (0-\gap,0-\gaptt,0)--(0-\gap,0-\gaptt,3);
\draw [->] (0-\gap,0-\gaptt,3)--(0-\gap,0-\gaptt,4);
\draw [->] (0-\gap,2.25-\gaptt,0)--(0-\gap,3-\gaptt,0);
\draw [->] (2.25-\gap,0-\gaptt,0)--(3-\gap,0-\gaptt,0);

\node [below] at (3-\gap,0-\gaptt,0) {\tiny$t_1$};
\node [left] at (0-\gap,3-\gaptt,0) {\tiny$t_2$};
\node [below] at (0-\gap,0-\gaptt,4) {\tiny$t_3$};

\node [left] at (0-\gap,2.25-\gaptt,-0.2) {\tiny$(0,1,0)$};
\node [right] at (1.38-\gap,2.25-\gaptt,-0.2) {\tiny$(s,1,0)$};
\node [left] at (0-\gap,2.25-\gaptt,1.8) {\tiny$(0,1,s)$};

\draw[dash pattern= { on 2pt off 1pt}] (0-\gap,1.4-\gaptt,-0.2)--(2.15-\gap,1.4-\gaptt,-0.2);
\draw[dash pattern= { on 2pt off 1pt}] (0-\gap,1.4625-\gaptt,1.95)--(2.25-\gap,1.4625-\gaptt,1.95);
\draw[dash pattern= { on 2pt off 1pt}] (0-\gap,0-\gaptt,1.95)--(2.25-\gap,0-\gaptt,1.95);

\draw[dash pattern= { on 2pt off 1pt}] (0-\gap,1.4625-\gaptt,0)--(0-\gap,1.4625-\gaptt,3);
\draw[dash pattern= { on 2pt off 1pt}] (1.4625-\gap,1.4625-\gaptt,0)--(1.4625-\gap,1.4625-\gaptt,3);
\draw[dash pattern= { on 2pt off 1pt}] (1.4625-\gap,0-\gaptt,0)--(1.4625-\gap,0-\gaptt,3);

\draw[dash pattern= { on 2pt off 1pt}] (1.38-\gap,0-\gaptt,-0.2)--(1.38-\gap,2.2-\gaptt,-0.2);
\draw[dash pattern= { on 2pt off 1pt}] (1.4625-\gap,0-\gaptt,1.95)--(1.4625-\gap,2.25-\gaptt,1.95);
\draw[dash pattern= { on 2pt off 1pt}] (0-\gap,0-\gaptt,1.95)--(0-\gap,2.25-\gaptt,1.95);

\node [right] at (2.1-\gap,1.4-\gaptt,-0.2) {\tiny$(1,s,0)$};
\node [right] at (2.1-\gap,0.1-\gaptt,-0.2) {\tiny$(1,0,0)$};
\node [right] at (2.25-\gap,0-\gaptt,1.95) {\tiny$(1,0,s)$};

\node [right] at (1.1-\gap,-0.2-\gaptt,3) {\tiny$(s,0,1)$};
\node [left] at (0-\gap,0-\gaptt,2.8) {\tiny$(0,0,1)$};
\node [left] at (0-\gap,1.4625-\gaptt,2.8) {\tiny$(0,s,1)$};

\node  at (2.1-\gap,0.7-\gaptt,0.9) {$\mathscr{V}_1^3(s)$};
\node  at (0.8-\gap,1.9-\gaptt,0.9) {$\mathscr{V}^3_2(s)$};
\node  at (0.8-\gap,0.7-\gaptt,2.8) {$\mathscr{V}^3_3(s)$};

\node [below] at (1.2-\gap,-1.5-\gaptt,1.3) {in $\mathrm{ \bf Claim}~ Z(s)$};


\path[fill=red!5] (0,0-\gaptt,3)--(1.4625,0-\gaptt,3)--(2.25,0-\gaptt,1.95)--(2.25,0-\gaptt,0)--(2.25,1.4625-\gaptt,0)--(1.4625,2.25-\gaptt,0)--(0,2.25-\gaptt,0)--(0,2.25-\gaptt,1.95)--(0,1.4625-\gaptt,3)--(0,0-\gaptt,3);

\draw[dash pattern= { on 2pt off 1pt}](1.4625,1.4625-\gaptt,3)--(1.4625,2.25-\gaptt,1.95)--(2.25,1.4625-\gaptt,1.95)--(1.4625,1.4625-\gaptt,3);
\draw[dash pattern= { on 2pt off 1pt}] (1.4625,1.4625-\gaptt,3)--(1.4625,0-\gaptt,3)--(2.25,0-\gaptt,1.95)--(2.25,1.4625-\gaptt,1.95);
\draw[dash pattern= { on 2pt off 1pt}] (1.4625,2.25-\gaptt,1.95)--(1.4625,2.25-\gaptt,0)--(2.25,1.4625-\gaptt,0)--(2.25,1.4625-\gaptt,1.95);
\draw[dash pattern= { on 2pt off 1pt}] (1.4625,1.4625-\gaptt,3)--(0,1.4625-\gaptt,3)--(0,2.25-\gaptt,1.95)--(1.4625,2.25-\gaptt,1.95);
\draw[dash pattern= { on 2pt off 1pt}] (2.25,0-\gaptt,1.95)--(2.25,0-\gaptt,0)--(2.25,1.4625-\gaptt,0);
\draw[dash pattern= { on 2pt off 1pt}] (0,2.25-\gaptt,1.95)--(0,2.25-\gaptt,0)--(1.4625,2.25-\gaptt,0);
\draw[dash pattern= { on 2pt off 1pt}] (1.4625,0-\gaptt,3)--(0,0-\gaptt,3)--(0,1.4625-\gaptt,3);

\draw[dash pattern= { on 1pt off 1pt}] (0,0-\gaptt,0)--(2.25,0-\gaptt,0);
\draw[dash pattern= { on 1pt off 1pt}] (0,0-\gaptt,0)--(0,2.25-\gaptt,0);
\draw[dash pattern= { on 1pt off 1pt}] (0,0-\gaptt,0)--(0,0-\gaptt,3);
\draw [->] (0,0-\gaptt,3)--(0,0-\gaptt,4);
\draw [->] (0,2.25-\gaptt,0)--(0,3-\gaptt,0);
\draw [->] (2.25,0-\gaptt,0)--(3,0-\gaptt,0);

\node [below] at (3,0-\gaptt,0) {\tiny$t_1$};
\node [left] at (0,3-\gaptt,0) {\tiny$t_2$};
\node [below] at (0,0-\gaptt,4) {\tiny$t_3$};

\node [left] at (0,2.25-\gaptt,-0.2) {\tiny$(0,1,0)$};
\node [right] at (1.38,2.25-\gaptt,-0.2) {\tiny$(s,1,0)$};
\node [left] at (0,2.25-\gaptt,1.8) {\tiny$(0,1,s)$};
\node [right] at (1.45,2.25-\gaptt,1.9) {\tiny$(s,1,s)$};

\node [right] at (2.1,1.4-\gaptt,-0.2) {\tiny$(1,s,0)$};
\node [right] at (2.1,0.1-\gaptt,-0.2) {\tiny$(1,0,0)$};
\node [right] at (2.25,0-\gaptt,1.95) {\tiny$(1,0,s)$};

\node [right] at (1.1,-0.2-\gaptt,3) {\tiny$(s,0,1)$};
\node [left] at (0,0-\gaptt,2.8) {\tiny$(0,0,1)$};
\node [left] at (0,1.4625-\gaptt,2.8) {\tiny$(0,s,1)$};

\node  at (1.2,1.15-\gaptt,1.5) {$\HH^3(s)$};

\node [below] at (1.2,-1.5-\gaptt,1.3) {in $\mathrm{ \bf Claim}~ \Sigma(s)$};

\end{tikzpicture}
\caption{The trilinear case $m=3$ : the range of $(\frac{1}{p_1},\frac{1}{p_2},\frac{1}{p_3})$}\label{fig1}
\end{figure}
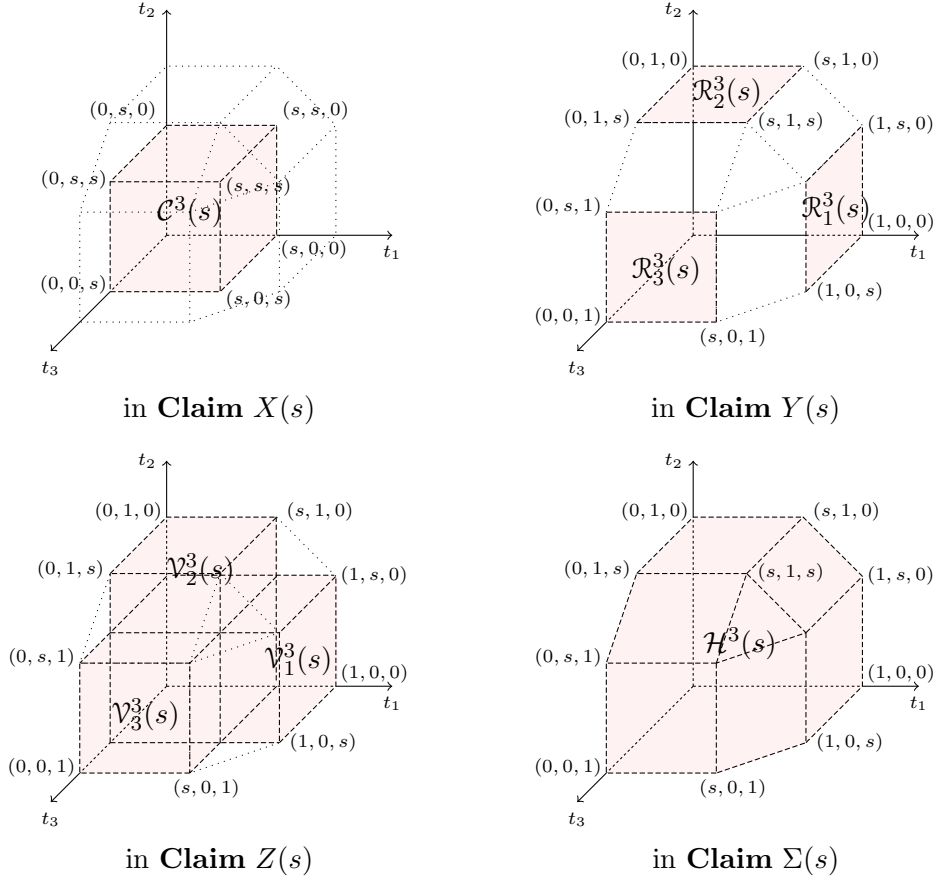
Then the following proposition will play an essential role in the induction steps.
\begin{proposition}\label{inductionpropo}
Let $0< s<1$.
Then
$$ \mathrm{ \bf Claim}~ X(s) \Rightarrow  \mathrm{ \bf Claims}~X(s)\text{ and } Y(s) \Rightarrow \mathrm{ \bf Claim}~ Z(s) \Rightarrow \mathrm{ \bf Claim}~ \Sigma(s).$$
\end{proposition}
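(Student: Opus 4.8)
The plan is to prove the chain of implications one arrow at a time. Since $\mathbf{Claim}~X(s)\Rightarrow\mathbf{Claim}~X(s)$ is trivial, the substantive steps are (a)~$\mathbf{Claim}~X(s)\Rightarrow\mathbf{Claim}~Y(s)$, (b)~$\mathbf{Claims}~X(s)$ and $Y(s)\Rightarrow\mathbf{Claim}~Z(s)$, and (c)~$\mathbf{Claim}~Z(s)\Rightarrow\mathbf{Claim}~\Sigma(s)$. Steps (b) and (c) will be soft, using only the multilinear Marcinkiewicz interpolation of Lemma~\ref{interpollemma} together with the convex-hull description $\HH^m(s)=\mathbb H^m(s)$ from Lemma~\ref{convexhull}; the intermediate claims $Y(s)$ and $Z(s)$ are arranged precisely so that each passage is either one Calder\'on--Zygmund argument or one interpolation. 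Step (a) carries the analytic weight and will be the main obstacle.

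For step (a) I would fix a point of $\bigcup_l\mathscr R^m_l(s)$, say after relabeling $(1/p_1,\dots,1/p_{m-1},1)$ with $1/p_j<s$ for $j<m$ and $1/m<p<1$, and run the $m$-linear modified Calder\'on--Zygmund argument of~\cite{He_Park2021}. Fixing a height $\alpha$ at which to estimate $|\{|\LL_\mu(f_1,\dots,f_m)|>\alpha\}|$, I would Calder\'on--Zygmund-decompose the single $L^1$-input as $f_m=g+\sum_Q b_Q$ at a level $\lambda$ (so $\|g\|_\infty\lesssim\lambda$, $\|g\|_1\lesssim\|f_m\|_1$, $\supp b_Q\subset Q$, $\int b_Q=0$, $\|b_Q\|_1\lesssim\lambda|Q|$, $\sum_Q|Q|\lesssim\lambda^{-1}\|f_m\|_1$). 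For the good part, choosing any $r$ with $1/r\in(0,s)$ puts $(1/p_1,\dots,1/p_{m-1},1/r)$ in the open cube $\mathcal C^m(s)$, so $\mathbf{Claim}~X(s)$ applies to $(f_1,\dots,f_{m-1},g)$; combined with $\|g\|_r\le\|g\|_\infty^{1-1/r}\|g\|_1^{1/r}$ and Chebyshev's inequality this handles $\LL_\mu(f_1,\dots,f_{m-1},g)$, and a scaling check shows the correct choice of $\lambda$ is a fixed multiple of $\alpha^p$ (modulo the $2^{\epsilon\mu}\|\Omega\|_{L^{1/(1-s)}}$ normalization), which is exactly what makes the part of the bad term supported near the cubes $Q$, of measure $\lesssim\sum_Q|Q|\lesssim\lambda^{-1}$, land at the right level. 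For the remote part of the bad term I would use $\int b_Q=0$ to replace $K_\mu$ by a difference in its last argument and exploit the $C^\delta$-smoothness of the kernel pieces $K_\mu^\gamma$ in that variable, with $\delta$ taken as small as we like; since $\wh{K_\mu^0}$ is supported where $|\xxxi|\sim 2^\mu$, this $C^\delta$-norm grows no faster than $2^{c\delta\mu}\|\Omega\|_{L^{1/(1-s)}}$ for a fixed $c=c(m,n)$, which is $\le 2^{\epsilon\mu}$ once $\delta<\epsilon/c$, while the remaining dyadic sum over $\gamma$ must be paired with the rough factors $f_j$ ($j\neq m$) through their local $L^{p_j}$ norms; this pairing, arranged as in~\cite{He_Park2021}, supplies the extra power of $|Q|$ needed for the weak exponent $p<1$. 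Making the bad part reproduce the correct power of $\alpha$ is the delicate point.

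For step (b) I would fix $P=(1/p_1,\dots,1/p_m)\in\mathscr V^m_l(s)$. If $1/p_l<s$ then $P\in\mathcal C^m(s)$ and $\mathbf{Claim}~X(s)$ applies immediately. Otherwise $1/p_l\in[s,1)$; let $Q$ be $P$ with the $l$-th coordinate changed to $1$ (so $Q\in\mathscr R^m_l(s)$ and $1<1/p_Q=1+\sum_{j\neq l}1/p_j<m$) and $R$ be $P$ with the $l$-th coordinate changed to any fixed $c\in(0,s)$ (so $R\in\mathcal C^m(s)$), giving $P=(1-\theta_0)R+\theta_0Q$ with $\theta_0=(1/p_l-c)/(1-c)\in(0,1)$. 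Since $\mathcal C^m(s)$ is open, a generic perturbation replaces $R$ by affinely independent points $R^{(1)},\dots,R^{(m)}\in\mathcal C^m(s)$ whose convex hull contains $R$ in its relative interior and which, together with $Q$, span a non-degenerate $m$-simplex with $P$ in its interior. Then I would apply $\mathbf{Claim}~X(s)$ at each $R^{(i)}$ (a strong, hence weak, bound with factor $2^{\epsilon'\mu}$ for an arbitrary $\epsilon'$), $\mathbf{Claim}~Y(s)$ at $Q$ (a weak $L^{p_Q,\infty}$ bound with the same kind of factor), and invoke the strong-type conclusion of Lemma~\ref{interpollemma}, legitimate because the interpolation nodes form a non-trivial open simplex, to obtain the $L^p$ bound at $P$ with constant $\lesssim 2^{\epsilon'\mu}\|\Omega\|_{L^{1/(1-s)}}$; taking $\epsilon'=\epsilon$ yields $\mathbf{Claim}~Z(s)$.

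For step (c), Lemma~\ref{convexhull} identifies $\HH^m(s)$ with the open convex hull of $\mathscr V^m_1(s),\dots,\mathscr V^m_m(s)$. Given $P\in\HH^m(s)$, Carath\'eodory's theorem together with the openness of this hull exhibits $P$ as an interior point of a non-degenerate $m$-simplex with vertices $P^{(0)},\dots,P^{(m)}\in\bigcup_l\mathscr V^m_l(s)$; applying $\mathbf{Claim}~Z(s)$ at each vertex (a strong $L^{p^{(i)}}$ bound with factor $2^{\epsilon\mu}$) and again using the strong-type conclusion of Lemma~\ref{interpollemma} delivers $\mathbf{Claim}~\Sigma(s)$. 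Throughout, the requirement $1/m<p<\infty$ in each claim is automatic because every exponent point used lies in $(0,1)^m$, and all implicit constants depend only on $P$, $s$, and $\epsilon$, not on $\mu$ or on the functions.
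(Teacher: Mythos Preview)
Your overall architecture matches the paper's proof, and steps (b) and (c) are correct. For (b) the paper takes a shortcut you may find cleaner: since at a point of $\mathscr V^m_l(s)$ only the $l$-th exponent can exceed $s$, one may freeze $p_1,\dots,p_{l-1},p_{l+1},\dots,p_m$ and apply ordinary (linear) Marcinkiewicz interpolation in the single variable $1/p_l$ between a strong bound at some $c\in(0,s)$ (from $X(s)$) and the weak bound at $1$ (from $Y(s)$). This avoids manufacturing an $(m+1)$-vertex simplex; your multilinear version via Lemma~\ref{interpollemma} also works but is heavier than needed. Step (c) is exactly the paper's argument.

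Step (a) has the right skeleton but is missing a structural ingredient and misidentifies where the hypothesis $1/p_j<s$ enters. After removing the exceptional set $\bigcup_Q Q^*$, the paper splits the sum over $\gamma$ into the two regimes $2^{\gamma}\ell(Q)\ge 1$ and $2^{\gamma}\ell(Q)<1$. The cancellation $\int b_Q=0$ and the $C^{\delta}$-smoothness you invoke produce a factor $(2^{\gamma}\ell(Q))^{\delta}$, which is only summable in the second regime; for $2^{\gamma}\ell(Q)\ge 1$ one instead uses that on $(Q^*)^c$ the point $x$ is far from $Q$ relative to the kernel scale $2^{-\gamma-\mu}$, which gives geometric decay without cancellation. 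Your sketch treats the whole remote bad part by the moment trick and would diverge on the large-scale piece. Separately, in both regimes the other inputs $f_j$ ($j\neq m$) are not controlled through ``local $L^{p_j}$ norms'' or by producing an ``extra power of $|Q|$''; rather one applies H\"older against $|\Omega|^{1/(1-s)}$ in the angular variable to obtain pointwise control by $\mathcal{M}_{1/s}\mathcal{M}f_j$, and the condition $1/p_j<s$ (i.e.\ $p_j>1/s$) is exactly what makes $\mathcal{M}_{1/s}$ bounded on $L^{p_j}$. Once these maximal functions are peeled off by H\"older, the remaining $L^1$ estimate in the $b_Q$ variable closes with $\sum_Q\|b_Q\|_{L^1}\lesssim 1$ and no additional power of $|Q|$ is required.
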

The proof of Proposition \ref{inductionpropo} will be given below.\\

We now complete the proof of Proposition \ref{mainproposition}, using Propositions \ref{lppropo} and \ref{inductionpropo}.

\begin{proof}[Proof of Proposition~\ref{mainproposition}]
For $\nu\in \bbn_0$, let
$$a_{\nu}:=1-\frac{1}{2}\Big(1-\frac{1}{m} \Big)^{\nu},$$
for which $(a_{\nu+1},\dots,a_{\nu+1})\in \bbr^m$ is the center of the $(m-1)$ simplex with $m$ vertices 
$(1,a_{\nu},a_{\nu},\dots,a_{\nu})$, $(a_{\nu},1,a_{\nu},\dots, a_{\nu})$, $\dots$, $(a_{\nu},\dots, a_{\nu},1,a_{\nu})$, and $(a_{\nu},\dots, a_{\nu},a_{\nu},1)$.
 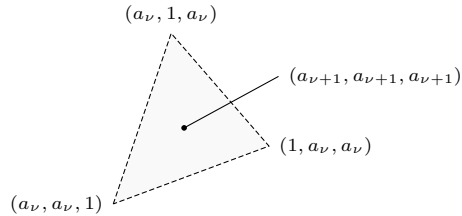
\begin{figure}[h]
\begin{tikzpicture}

\path[fill=gray!5] (1.7,1.5,4)--(1.7,3,2)--(3,1.5,2)--(1.7,1.5,4);

\draw[dash pattern= { on 2pt off 1pt}] (1.7,1.5,4)--(1.7,3,2)--(3,1.5,2)--(1.7,1.5,4);

\filldraw[fill=black] (2.13,2,2.67)  circle[radius=0.3mm];
\draw [-] (2.13,2,2.67)--(3,2.3,1.67);
\node [right] at (3,2.3,1.67) {\tiny$(a_{\nu+1},a_{\nu+1},a_{\nu+1})$};

\node [right] at (3,1.5,2) {\tiny$(1,a_{\nu},a_{\nu})$};
\node [above] at (1.7,3,2) {\tiny$(a_{\nu},1,a_{\nu})$};
\node [left] at (1.7,1.5,4) {\tiny$(a_{\nu},a_{\nu},1)$};

\end{tikzpicture}
\caption{$(a_{\nu+1},a_{\nu+1},a_{\nu+1})$ when $m=3$}\label{fig0}
\end{figure}
We notice that $a_0=1/2$, $a_{\nu}\nearrow 1$, and
$a_{\nu+1}=\frac{a_{\nu}(m-1)+1}{m}.$
Moreover, by definition, we have
$$\mathcal{C}^m(a_{\nu+1})\subset \HH^m(a_{\nu}) \q \text{ for all }~\nu\in \bbn_0,$$
which implies
\begin{equation}\label{zanutoxanu1}
 \mathrm{ \bf Claim}~ \Sigma(a_{\nu})\Rightarrow \mathrm{ \bf Claim}~ X(a_{\nu+1})  \q \text{ for all }~\nu\in \bbn_0
 \end{equation}
 as $L^{\frac{1}{1-a_{\nu+1}}}(\mathbb{S}^{mn-1})\hookrightarrow L^{\frac{1}{1-a_{\nu}}}(\mathbb{S}^{mn-1})$.
  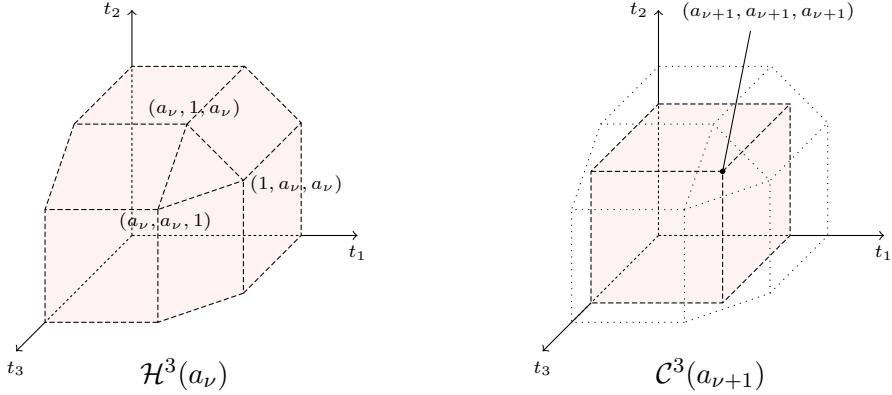
\begin{figure}[h]
\begin{tikzpicture}

\path[fill=red!5] (0+\gapt,0,3)--(1.5+\gapt,0,3)--(2.25+\gapt,0,1.995)--(2.25+\gapt,0,0)--(2.25+\gapt,1.5,0)--(1.5+\gapt,2.25,0)--(0+\gapt,2.25,0)--(0+\gapt,2.25,1.995)--(0+\gapt,1.5,3)--(0+\gapt,0,3);

\draw[dash pattern= { on 2pt off 1pt}](1.5+\gapt,1.5,3)--(1.5+\gapt,2.25,1.995)--(2.25+\gapt,1.5,1.995)--(1.5+\gapt,1.5,3);
\draw[dash pattern= { on 2pt off 1pt}] (1.5+\gapt,1.5,3)--(1.5+\gapt,0,3)--(2.25+\gapt,0,1.995)--(2.25+\gapt,1.5,1.995);
\draw[dash pattern= { on 2pt off 1pt}] (1.5+\gapt,2.25,1.995)--(1.5+\gapt,2.25,0)--(2.25+\gapt,1.5,0)--(2.25+\gapt,1.5,1.995);
\draw[dash pattern= { on 2pt off 1pt}] (1.5+\gapt,1.5,3)--(0+\gapt,1.5,3)--(0+\gapt,2.25,1.995)--(1.5+\gapt,2.25,1.995);
\draw[dash pattern= { on 2pt off 1pt}] (2.25+\gapt,0,1.995)--(2.25+\gapt,0,0)--(2.25+\gapt,1.5,0);
\draw[dash pattern= { on 2pt off 1pt}] (0+\gapt,2.25,1.995)--(0+\gapt,2.25,0)--(1.5+\gapt,2.25,0);
\draw[dash pattern= { on 2pt off 1pt}] (1.5+\gapt,0,3)--(0+\gapt,0,3)--(0+\gapt,1.5,3);

\node [right] at (2.2+\gapt,1.45,1.995) {\tiny$(1,a_{\nu},a_{\nu})$};
\node [above] at (1.6+\gapt,2.2,2) {\tiny$(a_{\nu},1,a_{\nu})$};
\node [below] at (1.6+\gapt,1.6,3) {\tiny$(a_{\nu},a_{\nu},1)$};

\draw[dash pattern= { on 1pt off 1pt}] (0+\gapt,0,0)--(2.25+\gapt,0,0);
\draw[dash pattern= { on 1pt off 1pt}] (0+\gapt,0,0)--(0+\gapt,2.25,0);
\draw[dash pattern= { on 1pt off 1pt}] (0+\gapt,0,0)--(0+\gapt,0,3);
\draw [->] (0+\gapt,0,3)--(0+\gapt,0,4);
\draw [->] (0+\gapt,2.25,0)--(0+\gapt,3,0);
\draw [->] (2.25+\gapt,0,0)--(3+\gapt,0,0);

\node [below] at (3+\gapt,0,0) {\tiny$t_1$};
\node [left] at (0+\gapt,3,0) {\tiny$t_2$};
\node [below] at (0+\gapt,0,4) {\tiny$t_3$};

\node [below] at (1.2+\gapt,-1,1.3) {$\HH^3(a_{\nu})$};


\path[fill=red!5] (0+\gapt+\gapt,0,2.33)--(1.75+\gapt+\gapt,0,2.33)--(1.75+\gapt+\gapt,1.75,2.33)--(0+\gapt+\gapt,1.75,2.33)--(0+\gapt+\gapt,0,2.33);
\path[fill=red!5] (1.75+\gapt+\gapt,0,2.33)--(1.75+\gapt+\gapt,0,0)--(1.75+\gapt+\gapt,1.75,0)--(1.75+\gapt+\gapt,1.75,2.33)--(1.75+\gapt+\gapt,0,2.33);
\path[fill=red!5] (1.75+\gapt+\gapt,1.75,0)--(0+\gapt+\gapt,1.75,0)--(0+\gapt+\gapt,1.75,2.33)--(1.75+\gapt+\gapt,1.75,2.33)--(1.75+\gapt+\gapt,1.75,0);

\draw[dash pattern= { on 2pt off 1pt}] (0+\gapt+\gapt,1.75,0)--(1.75+\gapt+\gapt,1.75,0)--(1.75+\gapt+\gapt,0,0)--(1.75+\gapt+\gapt,0,2.33)--(0+\gapt+\gapt,0,2.33)--(0+\gapt+\gapt,1.75,2.33)--(0+\gapt+\gapt,1.75,0);
\draw[dash pattern= { on 2pt off 1pt}] (0+\gapt+\gapt,1.75,2.33)--(1.75+\gapt+\gapt,1.75,2.33)--(1.75+\gapt+\gapt,0,2.33);
\draw[dash pattern= { on 2pt off 1pt}] (1.75+\gapt+\gapt,1.75,2.33)--(1.75+\gapt+\gapt,1.75,0);

\draw[dotted] (1.5+\gapt+\gapt,1.5,3)--(1.5+\gapt+\gapt,2.25,1.995)--(2.25+\gapt+\gapt,1.5,1.995)--(1.5+\gapt+\gapt,1.5,3);
\draw[dotted] (1.5+\gapt+\gapt,1.5,3)--(1.5+\gapt+\gapt,0,3)--(2.25+\gapt+\gapt,0,1.995)--(2.25+\gapt+\gapt,1.5,1.995);
\draw[dotted] (1.5+\gapt+\gapt,2.25,1.995)--(1.5+\gapt+\gapt,2.25,0)--(2.25+\gapt+\gapt,1.5,0)--(2.25+\gapt+\gapt,1.5,1.995);
\draw[dotted](1.5+\gapt+\gapt,1.5,3)--(0+\gapt+\gapt,1.5,3)--(0+\gapt+\gapt,2.25,1.995)--(1.5+\gapt+\gapt,2.25,1.995);
\draw[dotted] (2.25+\gapt+\gapt,0,1.995)--(2.25+\gapt+\gapt,0,0)--(2.25+\gapt+\gapt,1.5,0);
\draw[dotted] (0+\gapt+\gapt,2.25,1.995)--(0+\gapt+\gapt,2.25,0)--(1.5+\gapt+\gapt,2.25,0);
\draw[dotted] (1.5+\gapt+\gapt,0,3)--(0+\gapt+\gapt,0,3)--(0+\gapt+\gapt,1.5,3);

\draw[dash pattern= { on 1pt off 1pt}] (0+\gapt+\gapt,0,0)--(2.25+\gapt+\gapt,0,0);
\draw[dash pattern= { on 1pt off 1pt}] (0+\gapt+\gapt,0,0)--(0+\gapt+\gapt,2.25,0);
\draw[dash pattern= { on 1pt off 1pt}] (0+\gapt+\gapt,0,0)--(0+\gapt+\gapt,0,3);
\draw [->] (0+\gapt+\gapt,0,2.33)--(0+\gapt+\gapt,0,4);
\draw [->] (0+\gapt+\gapt,1.75,0)--(0+\gapt+\gapt,3,0);
\draw [->] (1.75+\gapt+\gapt,0,0)--(3+\gapt+\gapt,0,0);

\node [below] at (3+\gapt+\gapt,0,0) {\tiny$t_1$};
\node [left] at (0+\gapt+\gapt,3,0) {\tiny$t_2$};
\node [below] at (0+\gapt+\gapt,0,4) {\tiny$t_3$};

\filldraw[fill=black] (1.75+\gapt+\gapt,1.75,2.33)  circle[radius=0.3mm];
\draw [-] (1.75+\gapt+\gapt,1.75,2.33)--(1.05+\gapt+\gapt,2.55,-0.45);
\node [above right] at (0+\gapt+\gapt,2.55,-0.45) {\tiny$(a_{\nu+1},a_{\nu+1},a_{\nu+1})$};

\node [below] at (1.2+\gapt+\gapt,-1,1.3) {$\mathcal{C}^3(a_{\nu+1})$};

\end{tikzpicture}
\caption{The trilinear case $m=3$ : $\HH^3(a_{\nu})$ and $\mathcal{C}^3(a_{\nu+1})$}\label{fig5}
\end{figure}
Now Proposition \ref{lppropo} implies that $\mathrm{\bf Claim} ~X(a_0)$ holds, and consequently, $\mathrm{\bf Claim} ~\Sigma(a_{\nu})$ should be also true for all $\nu\in\bbn_0$ with the aid of Proposition \ref{inductionpropo} and (\ref{zanutoxanu1}).

When $s=1/2~(=a_0)$, the asserted estimate (\ref{mainpropoest}) is exactly $\mathrm{ \bf Claim}~ \Sigma(a_{0})$.
If $a_{\nu}<s\le a_{\nu+1}$ for some $\nu\in\bbn_0$, then $\mathcal{C}^m(s)\subset \mathcal{H}^m(a_{\nu})$. This yields that
$ \mathrm{ \bf Claim}~ X(s)$ holds since $L^{\frac{1}{1-s}}(\mathbb{S}^{mn-1})\hookrightarrow L^{\frac{1}{1-a_{\nu}}}(\mathbb{S}^{mn-1})$,
and accordingly, Proposition \ref{inductionpropo} shows that $ \mathrm{ \bf Claim}~ \Sigma (s)$ works. This finishes the proof of Proposition \ref{mainproposition}.\\

\end{proof}

Now let us prove Propositions \ref{lppropo} and \ref{inductionpropo}.
\begin{proof}[Proof of Proposition \ref{lppropo}]
 Observing that $\wh{K_{\mu}^0}\in L^{2}((\bbrn)^m)$, we apply the wavelet decomposition (\ref{daubechewavelet})  to write 
\begin{equation}\label{kmuexpansion}
\wh{K_{\mu}^0}(\xxxi)=\sum_{\la\in\bbn_0}\sum_{\GGG\in\II^{\la}}\sum_{\kkk\in (\bbzn)^m}b_{\GGG,\kkk}^{\la,\mu}\Psi_{G_1,k_1}^{\la}(\xi_1)\cdots \Psi_{G_m,k_m}^{\la}(\xi_m)
\end{equation}
where 
\begin{equation*}
b_{\GGG,\kkk}^{\la,\mu}:=\int_{(\bbrn)^m}{\wh{K_{\mu}^0}(\xxxi)\Psi_{\GGG,\kkk}^{\la}(\xxxi)}d\xxxi.
\end{equation*}
It is known in \cite[Lemma 7]{Gr_He_Ho2018} that for any $0<\delta<1/2$,
\begin{equation}\label{maininftyest}
\big\Vert \{b_{\GGG,\kkk}^{\la,\mu}\}_{\kkk\in (\bbzn)^m}\big\Vert_{\ell^{\infty}}\lesssim      2^{-\delta {\mu}}2^{-\la (L+1+mn)} \Vert \Om\Vert_{L^2(\mathbb{S}^{mn-1})}
\end{equation} where $L$ is the number of vanishing moments of $\Psi_{\GGG}$; this number $L$ can be chosen sufficiently large.
Moreover, it follows from the inequality (\ref{lqestimate}) and Plancherel's identity that
\begin{align}\label{mainlqest}
\big\Vert \{b_{\GGG,\kkk}^{\la,\mu}\}_{\kkk\in (\bbzn)^m}\big\Vert_{\ell^{2}}&\lesssim \big\Vert \wh{K_{\mu}^0} \big\Vert_{L^{2}((\bbrn)^m)}\lesssim \Vert \Omega \Vert_{L^2(\mathbb{S}^{mn-1})}.
\end{align} 

Using \eqref{kernelcharacter} and \eqref{kmuexpansion}, we can write
\begin{align}\label{lmusums}
\LL_{\mu}\big(f_1,\dots,f_m\big)(x)&=\sum_{\ga\in\bbz}\int_{(\bbrn)^m} 2^{\ga mn}K_{\mu}^0(2^{\ga}\yyy)\prod_{j=1}^{m}f_j(x-y_j)\, d\yyy \nonumber\\
&=\sum_{\ga\in\bbz}\int_{(\bbrn)^m}\wh{K_{\mu}^0}(\xxxi/2^{\ga})e^{2\pi i\langle x,\xi_1+\dots+\xi_m\rangle}\prod_{j=1}^{m}\wh{f_j}(\xi_j) \;d\xxxi \nonumber\\
&= \sum_{\la\in\bbn_0}\sum_{\GGG\in\II^{\la}} \sum_{\ga\in\bbz}\sum_{\kkk\in (\bbzn)^m} b_{\GGG,\kkk}^{\la,\mu}\prod_{j=1}^{m} L_{G_j,k_j}^{\la,\ga}f_j (x)
\end{align}
where $L_{G,k}^{\la,\ga}$ is defined in \eqref{lgklg}.

When $2^{{\mu}-10}>C_0\sqrt{mn}$, we may replace $\sum_{\kkk\in (\bbzn)^m}$ in \eqref{lmusums} by $\sum_{2^{\la+\mu-2}\le |\kkk|\le 2^{\la+\mu+2}}$
, due to the compact supports of $\wh{K_{\mu}^0}$ and $\Psi_{\GGG,\kkk}^{\la}$. 
In addition, by symmetry, it suffices to focus only on the case $|k_1|\ge\cdots\ge |k_m|$.
Therefore the estimate (\ref{prop1est}) can be reduced to the inequality
\begin{equation}\label{2mmainest}
\bigg\Vert  \sum_{\la\in\bbn_0}\sum_{\GGG\in\II^{\la}} \sum_{\ga\in\bbz}\sum_{\kkk\in\UU^{\la+\mu}} b_{\GGG,\kkk}^{\la,\mu}\prod_{j=1}^{m} L_{G_j,k_j}^{\la,\ga}f_j      \bigg\Vert_{L^{p}(\bbrn)}\lesssim_{\epsilon} 2^{\epsilon \mu}\Vert \Omega\Vert_{L^2(\mathbb{S}^{mn-1})}\prod_{j=1}^{m}\Vert f_j\Vert_{L^{p_j}(\bbrn)}
\end{equation} 
where
\begin{equation*}
\UU^{\la+{\mu}}:=\big\{ \kkk\in (\bbzn)^m: 2^{\la+{\mu}-2}\le |\kkk|\le 2^{\la+{\mu}+2},~  |k_1|\ge\cdots \ge |k_m| \big\}.
\end{equation*}

\hfill

We split $\UU^{\la+{\mu}}$ into the following $m$ disjoint subsets:
\begin{equation*}
\UU^{\la+{\mu}}_1:=\{\kkk\in \UU^{\la+{\mu}}:|k_1|\ge 2C_0\sqrt{n}>|k_2|\ge\cdots\ge |k_m|      \}
\end{equation*}
\begin{equation*}
\UU^{\la+{\mu}}_2:=\{\kkk\in \UU^{\la+{\mu}}:|k_1|\ge |k_2|\ge 2C_0\sqrt{n}>|k_3|\ge\cdots\ge |k_m|      \}
\end{equation*}
$$\vdots$$
\begin{equation*}
\UU^{\la+{\mu}}_m:=\{\kkk\in \UU^{\la+{\mu}}:|k_1|\ge \cdots\ge |k_m|\ge 2C_0\sqrt{n}      \}.
\end{equation*}
Then the left-hand side of (\ref{2mmainest}) is estimated by
\begin{equation*}
\bigg( \sum_{l=1}^{m}\sum_{\la\in\bbn_0}\sum_{\GGG\in\II^{\la}} \Big\Vert  \sum_{\ga\in\bbz} \TT_{\GGG,l}^{\la,\ga,\mu}(f_1,\dots,f_m)    \Big\Vert_{L^{p}(\bbrn)}^{\min{\{1,p\}}}\bigg)^{1/\min{\{1,p\}}}
\end{equation*} 
where the operator $\TT_{\GGG,l}^{\la,\ga,\mu}$ is defined by
\begin{equation*}
\TT_{\GGG,l}^{\la,\ga,\mu}\big(f_1,\dots,f_m\big):=\sum_{\kkk\in\UU_l^{\la+{\mu}}}b_{\GGG,\kkk}^{\la,\mu} \prod_{j=1}^{m}L_{G_j,k_j}^{\la,\ga}f_j.
\end{equation*}
We claim that for each $l\in J_m$, there exists $M_0>0$, depending on $p_1,\dots,p_m$, such that
\begin{equation}\label{2mmaingoal}
 \Big\Vert  \sum_{\ga\in\bbz} \TT_{\GGG,l}^{\la,\ga,\mu}(f_1,\dots,f_m)      \Big\Vert_{L^{p}(\bbrn)} \lesssim_{\epsilon} 2^{\epsilon \mu}2^{-\la M_0}\Vert \Omega\Vert_{L^2(\mathbb{S}^{mn-1})}\prod_{j=1}^{m}\Vert f_j\Vert_{L^{p_j}(\bbrn)},
\end{equation} 
 which clearly concludes (\ref{2mmainest}). Therefore it remains to prove \eqref{2mmaingoal}.

\hfill

The proof of (\ref{2mmaingoal}) for the case $l=1$ relies on the fact that
 if $\widehat{g_{\ga}}$ is supported in the set $\{\xi\in \rn : C^{-1} 2^{\ga+\mu}\leq |\xi|\leq C2^{\ga+\mu}\}$ for some $C>1$ and $\mu\in \bbz$, then
\begin{equation}\label{marshallest1}
\bigg\Vert \Big\{ \La_j \Big(\sum_{\ga\in\bbz}{g_{\ga}}\!\Big)\!\Big\}_{\! j\in\mathbb{Z}}\bigg\Vert_{L^p(\ell^q)}\lesssim_{C} \big\Vert \big\{ g_j\big\}_{j\in\mathbb{Z}}\big\Vert_{L^p(\ell^q)} \q \text{uniformly in }~\mu
\end{equation}  for $0<p<\infty$ and $0<q\le \infty$. The proof of (\ref{marshallest1}) is elementary and standard, so it is omitted here. See \cite[(3.9)]{Gr_Park2021} and  \cite[Theorem 3.6]{Ya1986}  for a related argument.
Note that if $\kkk\in\UU_{1}^{\la+\mu}$ and $2^{\mu-10}\ge C_0\sqrt{mn}$, then
\begin{equation*}
2^{\la+{\mu}-3}\le 2^{\la+{\mu}-2}-2C_0\sqrt{mn}\le |\kkk|- (|k_2|^2+\dots+|k_m|^2)^{1/2}\le |k_1|\le 2^{\la+{\mu}+2},
\end{equation*}
and this implies that 
\begin{equation*}
\supp\big(\Psi_{G_1,k_1}^{\la}(\cdot/2^{\ga}) \big)\subset \{\xi\in \bbrn: 2^{\ga+{\mu}-4}\le |\xi|\le 2^{\ga+{\mu}+3}\}.
\end{equation*}
Moreover,   since $|k_j|\le 2C_0\sqrt{n}$  for $2\le j\le m$ and $2^{{\mu}-10}>C_0\sqrt{mn}$,
\begin{equation*}
\supp\big(\Psi_{G_j,k_j}^{\la}(\cdot/2^{\ga})\big)\subset \{\xi\in \bbrn: |\xi|\le m^{-1/2}2^{\ga+{\mu}-8}\}.
\end{equation*} 
Therefore, the Fourier transform of $\TT_{\GGG,1}^{\la,\ga,\mu}\big(f_1,\dots,f_m\big)$ for $2^{\mu-10}\ge C_0\sqrt{mn}$  is supported in the set $\{\xi\in \bbrn: 2^{\ga+{\mu}-5}\le |\xi|\le 2^{\ga+{\mu}+4} \}$.
Now, using  the Littlewood-Paley theory for Hardy spaces, we have
\begin{equation*}
\Big\Vert  \sum_{\ga\in\bbz} \TT_{\GGG,1}^{\la,\ga,\mu}\big(f_1,\dots,f_m\big) \Big\Vert_{L^{p}(\bbrn)}\sim \bigg\Vert  \Big\{    \La_j \Big( \sum_{\ga\in\bbz} \TT_{\GGG,1}^{\la,\ga,\mu}\big(f_1,\dots,f_m\big) \Big)\Big\}_{j\in\bbz} \Big\Vert_{L^{p}(\ell^2)}
\end{equation*}
and then (\ref{marshallest1}) yields that the above $L^{p}(\ell^{2})$-norm is dominated by a constant multiple of 
\begin{equation}\label{LHSterm}
\bigg\Vert \Big( \sum_{\ga\in\bbz}{\big| \TT_{\GGG,1}^{\la,\ga,\mu}\big(f_1,\dots,f_m\big)  \big|^2}\Big)^{1/2}\bigg\Vert_{L^{p}(\bbrn)}.
\end{equation}
Using \eqref{disjointdecom1} and \eqref{lmaximalbound}, we see that
\begin{align*}
&\big| \TT_{\GGG,1}^{\la,\ga,\mu}\big(f_1,\dots,f_m\big)(x)  \big|\\
&\le \sum_{\kkk^{*1}\in\PP_{*1}\UU_1^{\la+\mu}} \prod_{j=2}^{m}\big|L_{G_{j},k_j}^{\la,\ga}f_j(x) \big| \Big| \sum_{k_1\in Col_{\kkk^{*1}}^{\UU_1^{\la+\mu}}}b_{\GGG,\kkk}^{\la,\mu} L_{G_1,k_1}^{\la,\ga}f_1(x)\Big|\\
&\lesssim 2^{\la n(m-1)/2}\prod_{j=2}^{m}\mathcal{M}f_j(x) \sum_{\kkk^{*1}\in\PP_{*1}\UU_1^{\la+\mu}} \Big| \sum_{k_1\in Col_{\kkk^{*1}}^{\UU_1^{\la+\mu}}}b_{\GGG,\kkk}^{\la,\mu} L_{G_1,k_1}^{\la,\ga}f_1(x)\Big|.
\end{align*}
Then it follows from H\"older's inequality and the maximal inequality for $\mathcal{M}$ that \eqref{LHSterm} is bounded by 
\begin{align*}
 2^{\la n(m-1)/2}\bigg( \prod_{j=2}^{m}\Vert f_j\Vert_{L^{p_j}(\bbrn)}\bigg) \sum_{\kkk^{*1}\in\PP_{*1}\UU_1^{\la+\mu}}  \bigg\Vert \Big( \sum_{\ga\in\bbz}\Big|  \sum_{k_1\in Col_{\kkk^{*1}}^{\UU_1^{\la+\mu}}}b_{\GGG,\kkk}^{\la,\mu} L_{G_1,k_1}^{\la,\ga}f_1   \Big|^2\Big)^{1/2}\bigg\Vert_{L^{p_1}(\bbrn)}.
\end{align*}
Now let $0<\epsilon_0<1$ be a sufficiently small number to be chosen later. 
Then Lemma \ref{keylemma3}, together with (\ref{maininftyest}) and (\ref{mainlqest}), yields that
\begin{align*}
& \bigg\Vert \Big( \sum_{\ga\in\bbz}\Big|  \sum_{k_1\in Col_{\kkk^{*1}}^{\UU_1^{\la+\mu}}}b_{\GGG,\kkk}^{\la,\mu} L_{G_1,k_1}^{\la,\ga}f_1   \Big|^2\Big)^{1/2}\bigg\Vert_{L^{p_1}(\bbrn)}\\
&\lesssim 2^{\la n/2}(\la+\mu+4)\Vert \Omega\Vert_{L^2(\mathbb{S}^{mn-1})}2^{-\delta\mu\epsilon_0}2^{-\la(L+1+mn)\epsilon_0}2^{(\la+\mu)n\epsilon_0}\Vert f_1\Vert_{L^{p_1}(\bbrn)}\\
&\lesssim 2^{\la n/2}2^{\mu \epsilon_0 (n-\delta)}2^{-\la \epsilon_0(L+1+mn-n)}(\la+\mu+4)\Vert \Omega\Vert_{L^2(\mathbb{S}^{mn-1})}\Vert f_1\Vert_{L^{p_1}(\bbrn)}
\end{align*} 
 as the cardinality of $Col_{\kkk^{*1}}^{\UU_1^{\la+\mu}}$ is less than $2^{(\la+\mu)n}$.
Finally, we have
\begin{equation*}
\Big\Vert  \sum_{\ga\in\bbz} \TT_{\GGG,1}^{\la,\ga,\mu}\big(f_1,\dots,f_m\big) \Big\Vert_{L^{p}(\bbrn)}\lesssim_{M_0} 2^{\epsilon \mu}2^{-\la M_0}\Vert \Omega\Vert_{L^2(\mathbb{S}^{mn-1})}\prod_{j=1}^{m}\Vert f_j\Vert_{L^{p_j}(\bbrn)},
\end{equation*} 
by choosing $\epsilon_0$ and $L$ such that
$$\epsilon=\epsilon_0n,\q M_0<\epsilon_0(L+1+mn-n)-mn/2.$$
This shows \eqref{2mmaingoal} for the case $l=1$.

\hfill

Now we suppose that $2\le l\le m$.
Using \eqref{disjointdecom1} and \eqref{lmaximalbound}, we write
\begin{align*}
 &\big| \TT_{\GGG,l}^{\la,\ga,\mu}\big(f_1,\dots,f_m\big)(x) \big|\\
 &\lesssim 2^{\la n(m-l)/2} \sum_{\kkk^{*1,\dots,l}\in \PP_{*1,\dots,l}\UU_l^{\la+\mu}}\Big( \prod_{j=l+1}^{m}\big| \mathcal{M}f_j(x)\big|   \Big)\bigg| \sum_{\kkk^{1,\dots,l}\in Col_{\kkk^{*1,\dots,l}}^{\UU_l^{\la+\mu}}} b_{\GGG,\kkk}^{\la,\mu} \prod_{j=1}^{l}L_{G_j,k_j}^{\la,\ga}f_j(x)    \; \bigg|
\end{align*} 
 and thus it follows from H\"older's inequality and the maximal inequality for $\mathcal{M}$ that  the left-hand side of (\ref{2mmaingoal}) is less than
 \begin{align*}
 &2^{\la n(m-l)/2} \Big( \prod_{j=l+1}^{m}\big\Vert f_j\big\Vert_{L^{p_j}(\bbrn)}\Big) \\
 &\qq\times \bigg( \sum_{\kkk^{*1,\dots,l}\in \PP_{*1,\dots,l}\UU_l^{\la+\mu}} \bigg\Vert  \sum_{\ga\in\bbz}   \Big|  \sum_{\kkk^{1,\dots,l}\in Col_{\kkk^{*1,\dots,l}}^{\UU_l^{\la+\mu}}} b_{\GGG,\kkk}^{\la,\mu} \prod_{j=1}^{l}L_{G_j,k_j}^{\la,\ga}f_j  \Big| \bigg\Vert_{L^{q_l}(\bbrn)}^{\min{\{1,p\}}}    \bigg)^{1/\min{\{1,p\}}}
 \end{align*}
 where $1/q_l:=1/p_1+\dots+1/p_l$.
Note that $Col_{\kkk^{*1,\dots,l}}^{\UU_l^{\la+\mu}}$ is a subset of $ (\WW^{\la+\mu})^l$ and thus
$$\big| Col_{\kkk^{*1,\dots,l}}^{\UU_l^{\la+\mu}}\big|\lesssim 2^{(\la+\mu) n l}.$$
Accordingly, 
Lemma \ref{keylemma32}, (\ref{maininftyest}), and (\ref{mainlqest}) yields that
 \begin{align*}
 &\bigg\Vert  \sum_{\ga\in\bbz}   \Big|  \sum_{\kkk^{1,\dots,l}\in Col_{\kkk^{*1,\dots,l}}^{\UU_l^{\la+\mu}}} b_{\GGG,\kkk}^{\la,\mu} \prod_{j=1}^{l}L_{G_j,k_j}^{\la,\ga}f_j  \Big| \bigg\Vert_{L^{q_l}(\bbrn)}\\
 &\lesssim  \Vert \Omega\Vert_{L^2(\mathbb{S}^{mn-1})}2^{\mu \epsilon_0(n-\delta)}2^{\la n l/2}(\la+\mu+4)^{l/\min{\{1,q_l\}}}2^{-\la \epsilon_0 (L+1+mn-nl)}\prod_{j=1}^{l}\Vert f_j\Vert_{L^{p_j}(\bbrn)}\\
 &\lesssim  \Vert \Omega\Vert_{L^2(\mathbb{S}^{mn-1})} 2^{\epsilon \mu} 2^{-\la (M_0+n(m-l)/2)}\prod_{j=1}^{l}\Vert f_j\Vert_{L^{p_j}(\bbrn)}
 \end{align*} 
 choosing $0<\epsilon_0<1$ and $L>0$ so that 
 $$ \epsilon=\epsilon_0 n \q \text{and} \q M_0+mn/2<\epsilon_0(L+1).$$
 This concludes that (\ref{2mmaingoal}) holds for $2\le l\le m$.
\end{proof}

\hfill

\begin{proof}[Proof of Proposition \ref{inductionpropo}]
Let $0<s<1$.
We first note that 
the direction $$\mathrm{ \bf Claims}~ X(s) \text{ and } Y(s)\Rightarrow \mathrm{\bf Claim}~Z(s)$$
follows from the (linear) Marcinkiewicz interpolation method. Here, we apply the interpolation separately $m$ times and in each interpolation, $m-1$ parameters among $p_1,\dots,p_m$ are fixed.
Moreover, the direction 
$$\mathrm{\bf Claim}~Z(s) \Rightarrow \mathrm{\bf Claim}~\Sigma(s)$$
also holds due to  Lemmas \ref{interpollemma} and \ref{convexhull}.

Therefore we need to prove the remaining direction
$\mathrm{ \bf Claim}~ X(s) \Rightarrow  \mathrm{ \bf Claim}~ Y(s)$.
The proof is based on the idea in \cite{He_Park2021}.
We are only concerned with the case $(1/p_1,\dots,1/p_m)\in\mathscr{R}_1^m(s)$ as a symmetric argument is applicable to the other cases.
 Assume that $p_1=1$, ${1}/{s}<p_2,\dots,p_m< \infty$, and $1+1/p_2+\cdots+1/p_m=1/p$.
Without loss of generality, we may also assume $\Vert f_1\Vert_{L^1(\bbrn)}=\Vert f_2\Vert_{L^{p_2}(\bbrn)}=\cdots=\Vert f_m\Vert_{L^{p_m}(\bbrn)}=\Vert \Omega\Vert_{L^{\frac{1}{1-s}}(\mathbb{S}^{mn-1})}=1$ and then it is enough to prove
\begin{equation}\label{weakmainest}
\Big|\Big\{    x\in \bbrn:    \big|\LL_{\mu}(f_1,\dots,f_m)(x)\big|>t    \Big\}\Big|\lesssim_{\epsilon} 2^{\epsilon \mu  p }{{t^{-p}}}.
\end{equation}
 We shall use the Calder\'on-Zygmund decomposition of $f_1$ at height $t^p$. Then $f_1$ can be expressed as
 $$f_1=g_1+\sum_{Q\in\mathcal{A}}{b_{1,Q}}$$
 where $\mathcal{A}$ is a subset of disjoint dyadic cubes, $\big| \bigcup_{Q\in \mathcal{A}}Q\big|\lesssim {t^{-p}}$, $\supp(b_{1,Q})\subset Q$, $\int{b_{1,Q}(y)}dy=0$, $\Vert b_{1,Q}\Vert_{L^1(\bbrn)}\lesssim t^{p}|Q|$, and $\Vert g_1\Vert_{L^r(\bbrn)}\lesssim t^{(1-{1}/{r})p}$ for all $1\le r\le \infty$.
Then the left-hand side of (\ref{weakmainest}) is less than
\begin{align*}
&\Big|\Big\{    x\in \bbrn:    \big|\LL_{\mu}(g_1,f_2,\dots,f_m)(x)\big|>t/2    \Big\}\Big|\\
&\qq +\bigg|\bigg\{    x\in \bbrn:    \Big|\LL_{\mu}\Big(\sum_{Q\in\mathcal{A}}{b_{1,Q}},f_2,\dots,f_m\Big)(x)\Big|>t/2    \bigg\}\bigg|=:\Xi_1^{\mu}+\Xi_2^{\mu}
\end{align*}

For the estimation of the first term, we choose ${1}/{s}<p_0<\infty$ and $\wt{p}>p$ with $1/p_0+1/p_2+\dots+1/p_m=1/\wt{p}$ 
and set $\epsilon_0:=\epsilon p/\wt{p}$ so that $0<\epsilon_0<1$.
Then the assumption $\mathrm{ \bf Claim}~ X(s) $ yields that
\begin{equation}\label{claimxnu}
\big\Vert \LL_{\mu}(g_1,f_2,\dots,f_m)\big\Vert_{L^{\wt{p}}(\bbrn)}\lesssim_{\epsilon_0}2^{\epsilon_0 \mu} \Vert g_1\Vert_{L^{p_0}(\bbrn)}\lesssim 2^{\epsilon_0\mu}t^{(1-1/p_0)p}.
\end{equation}
Now, using Chebyshev's inequality and the estimate (\ref{claimxnu}), the first term $\Xi_1^{\mu}$ is clearly dominated by
\begin{equation*}
 {t^{-\wt{p}}}\big\Vert \LL_{\mu}(g_1,f_2,\dots,f_m)\big\Vert_{L^{\wt{p}}(\bbrn)}^{\wt{p}}\lesssim 2^{\epsilon_0 \mu \wt{p}}t^{-\wt{p}(1-p(1-1/p_0))}= 2^{\epsilon \mu p}{t^{-p}}
\end{equation*}
since $\wt{p}(1-p(1-1/p_0))=p$.

Moreover, the remaining term $\Xi_2^{\mu}$ is estimated by the sum of $\big| \bigcup_{Q\in\mathcal{A}}Q^*\big|$ and
 \begin{equation*}
\Gamma_{\mu}:= \bigg| \bigg\{x\in \Big( \bigcup_{Q\in\mathcal{A}}Q^*\Big)^c : \Big| \LL_{\mu}\Big(\sum_{Q\in\mathcal{A}}b_{1,Q},f_2,\dots,f_m\Big)(x)\Big|>{t}/{2} \bigg\}\bigg|
 \end{equation*}
 where $Q^*$ is the concentric dilate of $Q$ with $\ell(Q^*)=10^2\sqrt{n}\ell(Q)$.
 Since  $\big| \bigcup_{Q\in\mathcal{A}}Q^*\big|\lesssim {t^{-p}}$, the proof of (\ref{weakmainest}) can be reduced to the inequality
 \begin{equation}\label{toshowprop}
 \Ga_{\mu} \lesssim_{\epsilon}2^{\epsilon \mu p}{t^{-p}}.
 \end{equation}
We apply Chebyshev's ineqaulity to deduce 
 \begin{align*}
 \Ga_{\mu}&\lesssim {t^{-p}}\int_{(\bigcup_{Q\in\mathcal{A}}Q^*)^c}\Big( \sum_{Q\in\mathcal{A}}\sum_{\gamma\in\bbz} \big|T_{K_{\mu}^{\ga}}\big(b_{1,Q},f_2,\dots,f_m \big)(x) \big|\Big)^{p}dx\\
 &\le {t^{-p}}\int_{(\bigcup_{Q\in\mathcal{A}}Q^*)^c}\Big( \sum_{Q\in\mathcal{A}}\sum_{\gamma: 2^{\ga}\ell(Q)\ge 1} \big|T_{K_{\mu}^{\ga}}\big(b_{1,Q},f_2,\dots,f_m \big)(x) \big|\Big)^{p}dx\\
 &\qq \qq \qq+{t^{-p}}\int_{\bbrn}\Big( \sum_{Q\in\mathcal{A}}\sum_{\gamma: 2^{\ga}\ell(Q)<1} \big|T_{K_{\mu}^{\ga}}\big(b_{1,Q},f_2,\dots,f_m \big)(x) \big|\Big)^{p}dx\\
 &=:\Ga_{\mu}^1+\Ga_{\mu}^2
 \end{align*}
 where $T_{K_{\mu}^{\ga}}$ is the multilinear operator associated with the kernel $K_{\mu}^{\ga}$ so that
 \begin{equation*}
  T_{K_{\mu}^{\ga}}\big( b_{1,Q},f_2,\dots,f_m\big)(x)=\int_{(\bbrn)^m} K_{\mu}^{\ga}(x-y_1,\dots,x-y_m)b_{1,Q}(y_1)\prod_{j=2}^{m}f_j(y_j) \;d\yyy.
  \end{equation*}
  
 To estimate $\Ga_{\mu}^1$, we see that
 \begin{align*}
 &\big|T_{K_{\mu}^{\ga}}\big(b_{1,Q},f_2,\dots,f_m \big)(x) \big| \\
 &\lesssim\int_{(\bbrn)^m}\int_{|\zzz|\sim 2^{-\ga}}2^{\ga mn}\big| \Omega(\zzz\; ')\big| \big|\Phi_{\mu+\ga}(x-y_1-z_1,\dots,x-y_m-z_m) \big|\\
 &\qq\qq\qq\qq\qq\qq\qq\qq\qq\times |b_{1,Q}(y_1)| \Big(\prod_{j=2}^{m}|f_j(y_j)|\Big)\; d\zzz \; d\yyy\\
 &\lesssim_L\int_{|\zzz|\sim 2^{-\ga}} 2^{\ga mn}\big| \Omega(\zzz\; ')\big| \Big(\int_{y_1\in Q}\frac{2^{(\mu+\ga)n}}{(1+2^{\mu+\ga}|x-y_1-z_1|)^L}|b_{1,Q}(y_1)|   dy_1\Big)\\
 & \qq\qq\qq\qq\qq\times \prod_{j=2}^{m}\Big(\int_{\bbrn}\frac{2^{(\mu+\ga)n}}{(1+2^{\mu+\ga}|x-y_j-z_j|)^L}|f_j(y_j)|   dy_j\Big) \;d\zzz
 \end{align*} for all $L>n$.
 Clearly,  we have
 \begin{equation}\label{maximalbound}
 \int_{\bbrn}\frac{2^{(\mu+\ga)n}}{(1+2^{\mu+\ga}|x-y_j-z_j|)^L}|f_j(y_j)|   dy_j\lesssim \mathcal{M}f_j(x-z_j), \qq j=2,\dots,m
 \end{equation} 
 and for $2^{\ga}\ell(Q)\ge 1$ and $|z_1|\le 2^{-\ga+1}$,
 $$\int_{y_1\in Q}\frac{2^{(\mu+\ga)n}}{(1+2^{\mu+\ga}|x-y_1-z_1|)^L}|b_{1,Q}(y_1)|   dy_1\lesssim \frac{2^{(\mu+\ga)n}}{(1+2^{\mu+\ga}|x-c_Q|)^L}\Vert b_{1,Q}\Vert_{L^1(\bbrn)}$$ because $|x-y_1-z_1|\gtrsim |x-c_{Q}|$.
 Therefore, we have
 \begin{align*}
 &\big|T_{K_{\mu}^{\ga}}\big(b_{1,Q},f_2,\dots,f_m \big)(x) \big|\\
 &\lesssim \frac{2^{(\mu+\ga)n}}{(1+2^{\mu+\ga}|x-c_Q|)^L}\Vert b_{1,Q}\Vert_{L^1(\bbrn)} \int_{|\zzz|\sim 2^{-\ga}} 2^{\ga mn}\big| \Omega(\zzz \; ')\big| \Big( \prod_{j=2}^{m}\mathcal{M}f_j(x-z_j)\Big) d\zzz.
 \end{align*}
Now H\"older's inequality yields
 \begin{align*}
 &\int_{|\zzz|\sim 2^{-\ga}} 2^{\ga mn}\big| \Omega(\zzz \; ')\big| \Big( \prod_{j=2}^{m}\mathcal{M}f_j(x-z_j)\Big)\; d\zzz \nonumber\\
 &\le \Big(\int_{|\zzz|\sim 2^{-\ga}}{2^{\ga mn}\big| \Omega(\zzz\; ')\big|^{\frac{1}{1-s}}}d\zzz \Big)^{1-s}\Big( \int_{|\zzz|\sim 2^{-\ga}}{2^{\ga mn} \Big( \prod_{j=2}^{m}\mathcal{M}f_j(x-z_j)\Big)^{\frac{1}{s}}}d\zzz \;'\Big)^{s} \nonumber\\
 &\le \Vert \Omega\Vert_{L^{\frac{1}{1-s}}(\mathbb{S}^{mn-1})}\prod_{j=2}^{m}\Big(2^{\ga n}\int_{|z_j|\lesssim 2^{-\ga}}\big| \mathcal{M}f_j(x-z_j)\big|^{\frac{1}{s}}dz_j \Big)^{s}\lesssim \prod_{j=2}^{m}\mathcal{M}_{\frac{1}{s}}\mathcal{M}f_j(x)
 \end{align*}
 and thus
 \begin{align*}
 \big|T_{K_{\mu}^{\ga}}\big(b_{1,Q},f_2,\dots,f_m \big)(x) \big|\lesssim  \frac{2^{(\mu+\ga)n}}{(1+2^{\mu+\ga}|x-c_Q|)^L}\Vert b_{1,Q}\Vert_{L^1(\bbrn)}\prod_{j=2}^{m}\mathcal{M}_{\frac{1}{s}}\mathcal{M}f_j(x).
 \end{align*}
 This, together with H\"older's inequality, deduces that $ \Ga_{\mu}^1$ is dominated by a constant times
 \begin{align*}
&{t^{-p}}\int_{(\bigcup_{Q\in\mathcal{A}}Q^*)^c} \Big( \prod_{j=2}^{m} \mathcal{M}_{\frac{1}{s}}\mathcal{M}f_j(x)\Big)^p\Big(\sum_{Q\in\mathcal{A}}\sum_{\ga: 2^{\ga}\ell(Q)\ge 1}  \frac{2^{(\mu+\ga)n}}{(1+2^{\mu+\ga}|x-c_Q|)^L}\Vert b_{1,Q}\Vert_{L^1(\bbrn)}     \Big)^{p}dx\\
 &\le {t^{-p}} \Big( \prod_{j=2}^{m}\big\Vert \mathcal{M}_{\frac{1}{s}}\mathcal{M}f_j\big\Vert_{L^{p_j}(\bbrn)} \sum_{Q\in\mathcal{A}}\sum_{\ga: 2^{\ga}\ell(Q)\ge 1}\Big\Vert \frac{2^{(\mu+\ga)n}}{(1+2^{\mu+\ga}|\cdot-c_Q|)^L} \Big\Vert_{L^1((Q^*)^c)}\Vert b_{1,Q}\Vert_{L^1(\bbrn)}\Big)^{p}.
 \end{align*}
 Since ${1}/{s}<p_2,\dots,p_m<\infty$, 
 each $L^{p_j}$ norm is controlled by $\Vert f_j\Vert_{L^{p_j}(\bbrn)}=1$, using the $L^{p_j}$ boundedness of both $\mathcal{M}_{\frac{1}{s}}$ and $\mathcal{M}$.
 Moreover, using the fact that for $2^{\mu-10}\ge C_0\sqrt{mn}$,
 $$\Big\Vert \frac{2^{(\mu+\ga)n}}{(1+2^{\mu+\ga}|\cdot-c_Q|)^L} \Big\Vert_{L^1((Q^*)^c)}\lesssim 2^{-\mu(L-n)}\big(2^{\ga}\ell(Q) \big)^{-(L-n)}\le \big(2^{\ga}\ell(Q) \big)^{-(L-n)},$$ we have
 \begin{equation*}
 \sum_{Q\in\mathcal{A}}\sum_{\ga: 2^{\ga}\ell(Q)\ge 1}\Big\Vert \frac{2^{(\mu+\ga)n}}{(1+2^{\mu+\ga}|\cdot-c_Q|)^L} \Big\Vert_{L^1((Q^*)^c)}\Vert b_{1,Q}\Vert_{L^1(\bbrn)}\lesssim  \sum_{Q\in\mathcal{A}}\Vert b_{1,Q}\Vert_{L^1(\bbrn)}\lesssim 1.
 \end{equation*}
 This concludes 
 $$\Ga_{\mu}^1\lesssim t^{-p}.$$
 
Next, let us deal with the other term $\Ga_{\mu}^2$.
 By using the vanishing moment condition of $b_{1,Q}$, we have
 \begin{align}\label{tkest}
 &\big| T_{K_{\mu}^{\ga}}\big(b_{1,Q},f_2,\dots,f_m\big)(x)\big|\nonumber\\
 &\lesssim \int_{|\zzz|\sim 2^{-\ga}} 2^{\ga mn}\big| \Omega(\zzz ')\big| \bigg(\int_{(\bbrn)^m}\big| \Phi_{\mu+\ga}(x-y_1-z_1,\dots,x-y_m-z_m)\\
 & \qq -\Phi_{\mu+\ga}(x-c_Q-z_1,x-y_2-z_2,\dots,x-y_m-z_m)\big| \big|b_{1,Q}(y_1)\big| \Big( \prod_{j=2}^{m}|f_j(y_j)|\Big)d\yyy \bigg) \; d\zzz. \nonumber
 \end{align}
 We observe that
 \begin{align*}
&\big| \Phi_{\mu+\ga}(x-y_1-z_1,\dots,x-y_m-z_m)-\Phi_{\mu+\ga}(x-c_Q-z_1,x-y_2-z_2,\dots,x-y_m-z_m)\big| \\
&\lesssim 2^{(\mu+\ga)}\ell(Q)\; V^L_{\mu+\ga}(x-z_1,y_1,c_Q)\;\Big(\prod_{j=2}^{m}\frac{2^{(\mu+\ga)n}}{(1+2^{\mu+\ga}|x-y_j-z_j|)^L}\Big)
 \end{align*}
 where
 $$V^L_{\mu+\ga}(x,y_1,c_Q):=\int_0^1\frac{2^{(\mu+\ga)n}}{(1+2^{\mu+\ga}|x-ty_1-(1-t)c_Q|)^L}dt.$$
Furthermore,
 \begin{align*}
 &\big| \Phi_{\mu+\ga}(x-y_1-z_1,\dots,x-y_m-z_m)-\Phi_{\mu+\ga}(x-c_Q-z_1,x-y_2-z_2,\dots,x-y_m-z_m)\big|\\
 &\lesssim_L W_{\mu+\ga}^L(x-z_1,y_1,c_Q)\;\Big(\prod_{j=2}^{m}\frac{2^{(\mu+\ga)n}}{(1+2^{\mu+\ga}|x-y_j-z_j|)^L}\Big)
 \end{align*}
 where $$W_{\mu+\ga}^L(x,y_1,c_Q):=\frac{2^{(\mu+\ga)n}}{(1+2^{\mu+\ga}|x-y_1|)^L}+\frac{2^{(\mu+\ga)n}}{(1+2^{\mu+\ga}|x-c_Q|)^L}.$$
By averaging these two estimates and letting
$$U_{\mu+\ga}^{L,\epsilon}(x,y_1,c_Q):=\big( V_{\mu+\ga}^L(x,y_1,c_Q)\big)^{\epsilon}\big(W_{\mu+\ga}^{L}(x,y_1,c_Q) \big)^{1-\epsilon},$$
 we obtain 
\begin{align}\label{differenceest}
&\big| \Phi_{\mu+\ga}(x-y_1-z_1,\dots,x-y_m-z_m)-\Phi_{\mu+\ga}(x-c_Q-z_1,x-y_2-z_2,\dots,x-y_m-z_m)\big| \nonumber\\
&\lesssim_{L,\epsilon}2^{\epsilon \mu}\big(2^{\ga}\ell(Q) \big)^{\epsilon}    U_{\mu+\ga}^{L,\epsilon}(x-z_1,y_1,c_Q)    \;\Big(\prod_{j=2}^{m}\frac{2^{(\mu+\ga)n}}{(1+2^{\mu+\ga}|x-y_j-z_j|)^L}\Big).
\end{align}
 Here, we note that
 \begin{equation*}
 \big\Vert U_{\mu+\ga}^{L,\epsilon}(\cdot, y_1,c_Q)\big\Vert_{L^1(\bbrn)}\le \big\Vert V_{\mu+\ga}^{L}(\cdot,y_1,c_Q)\big\Vert_{L^1(\bbrn)}^{\epsilon} \big\Vert W_{\mu+\ga}^{L}(\cdot,y_1,c_Q)\big\Vert_{L^1(\bbrn)}^{1-\epsilon}\lesssim 1.
 \end{equation*}
 By plugging (\ref{differenceest}) into (\ref{tkest}), we obtain
 \begin{align*}
 &\big| T_{K_{\mu}^{\ga}}\big(b_{1,Q},f_2,\dots,f_m\big)(x)\big|\\
 & \lesssim 2^{\ep \mu}\big( 2^{\ga}\ell(Q)\big)^{\ep}\int_{|\zzz|\sim 2^{-\ga}}2^{\ga m n}\big| \Omega(\zzz ' )\big|\Big(\int_{\bbrn}{U_{\mu+\ga}^{L,\ep}(x-z_1,y_1,c_Q) |b_{1,Q}(y_1)|}dy_1 \Big)\\
 &\qq\qq \qq\qq \qq \times \Big(\prod_{j=2}^{m}\int_{\bbrn}{\frac{2^{(\mu+\ga)n}}{(1+2^{\mu+\ga}|x-y_j-z_j|)^L} |f_j(y_j)|}dy_j \Big)\; d\zzz\\
 &\lesssim 2^{\ep \mu}\big( 2^{\ga}\ell(Q)\big)^{\ep}
 \int_{|z_1|\lesssim 2^{-\ga}}\int_{\bbrn}{U_{\mu+\ga}^{L,\ep}(x-z_1,y_1,c_Q) |b_{1,Q}(y_1)|}dy_1\\
 &\qq\qq \qq\qq \qq \times \Big(\int_{|(z_2,\dots, z_m)|\lesssim 2^{-\ga}}2^{\ga  mn}\big| \Omega(\zzz' )\big|\prod_{j=2}^m\mathcal{M}f_j(x-z_j)\; dz_2\cdots dz_m\Big) \;dz_1
 \end{align*}
    where (\ref{maximalbound}) is applied. The innermost integral is, via H\"older's inequality, bounded by
    $$2^{\ga n}\Big( \int_{|(z_2,\dots,z_m)|\lesssim 2^{-\ga}} 2^{\ga (m-1)n}\big| \Omega(\zzz')\big|^{\frac{1}{1-s}}dz_2\cdots dz_m\Big)^{1-s}\prod_{j=2}^{m}\mathcal{M}_{\frac{1}{s}}\mathcal{M}f_j(x)$$
and thus we have
 \begin{align*}
& \big| T_{K_{\mu}^{\ga}}\big(b_{1,Q},f_2,\dots,f_m\big)(x)\big|  \lesssim 2^{\ep \mu}\big( 2^{\ga}\ell(Q)\big)^{\ep} \;\prod_{j=2}^m\mathcal{M}_{\frac{1}{s}}\mathcal{M}f_j(x)\int_{\bbrn} |b_{1,Q}(y_1)|
\\ 
 &\q \times  \int_{|z_1|\lesssim 2^{-\ga}}{2^{\ga n}U_{\mu+\ga}^{L,\ep}(x-z_1,y_1,c_Q) }
\Big(\int_{|(z_2,\dots,z_m)|\lesssim 2^{-\ga}}2^{\ga (m-1)n}\big| \Omega(\zzz' )\big|^{\tf1{1-s}}\; dz_2\cdots dz_m\Big)^{1-s} dz_1dy_1
  \end{align*}
 Now, by using H\"older's inequality and the maximal inequality for $\mathcal{M}_{\frac{1}{s}}$ and $\mathcal{M}$, we have
\begin{align*}
\Ga_{\mu}^2 &\lesssim t^{-p}2^{\epsilon \mu p}\bigg\Vert    \sum_{Q\in\mathcal{A}}\sum_{\gamma: 2^{\ga}\ell(Q)<1} \big( 2^{\ga}\ell(Q)\big)^{\ep}
\int_{\bbrn} |b_{1,Q}(y_1)|   \int_{|z_1|\lesssim 2^{-\ga}}2^{\ga n}{U_{\mu+\ga}^{L,\ep}(\cdot-z_1,y_1,c_Q) }\\
&\qq\qq\times \Big(\int_{|(z_2,\dots, z_m)|\lesssim 2^{-\ga}}2^{\ga (m-1)n}\big| \Omega(\zzz' )\big|^{\tf1{1-s}}\; dz_2\cdots dz_m\Big)^{1-s} dz_1dy_1   \bigg\Vert_{L^1(\bbrn)}^p.
\end{align*}
Moreover, the $L^1$ norm in the last displayed expression is bounded by
  \begin{align*}
&\sum_{Q\in\mathcal{A}}\sum_{\ga:2^{\ga}\ell(Q)<1}\big( 2^{\ga}\ell(Q)\big)^{\ep} \int_{\bbrn} |b_{1,Q}(y_1)|\big\Vert U_{\mu+\ga}^{L,\ep}(\cdot,y_1,c_Q) \big\Vert_{L^1(\bbrn)}\\
&\qq\times  \int_{|z_1|\lesssim 2^{-\ga}}2^{\ga n}\Big(\int_{|(z_2,\dots, z_m)|\lesssim 2^{-\ga}}2^{\ga (m-1)n}\big| \Omega(\zzz' )\big|^{\tf1{1-s}}\; dz_2\cdots dz_m\Big)^{1-s} dz_1 \;dy_1\\
&\lesssim  \sum_{Q\in\mathcal{A}}\sum_{\ga:2^{\ga}\ell(Q)<1}\big( 2^{\ga}\ell(Q)\big)^{\ep}\int_{\bbrn}|b_{1,Q}(y_1)|\Big( \int_{|\zzz|\lesssim 2^{-\ga}}2^{\ga mn}\big| \Omega(\zzz')\big|^{\frac{1}{1-s}} d\zzz\Big)^{1-s}dy_1\\
&\lesssim \sum_{Q\in\mathcal{A}}\Vert b_{1,Q}\Vert_{L^1(\bbrn)}    \sum_{\ga:2^{\ga}\ell(Q)<1}\big( 2^{\ga}\ell(Q)\big)^{\ep}\lesssim 1
\end{align*}
where the first inequality follows from H\"older's inequality.
This proves 
$$\Ga_{\mu}^2\lesssim t^{-p}2^{\epsilon \mu p},$$
which finally completes the proof of \eqref{toshowprop}.
\end{proof}



\end{document}